\newtheorem{theorem}{Theorem}[subsection]
\newtheorem{definition}[theorem]{Definition}
\newtheorem{proto-definition}[theorem]{Proto-Definition}
\newtheorem{pseudo-definition}[theorem]{Pseudo-Definition}
\newtheorem{definition-lemma}[theorem]{Definition/Lemma}
\newtheorem{definition-explanation}[theorem]{Definition/Explanation}
\newtheorem{explanation-definition}[theorem]{Explanation/Definition}
\newtheorem{definition-fact}[theorem]{Definition/Fact}
\newtheorem{definition-notation}[theorem]{Definition/Notation}
\newtheorem{definition-conjecture}[theorem]{Definition/Conjecture}
\newtheorem{definition-theorem}[theorem]{Definition/Theorem}
\newtheorem{lemma}[theorem]{Lemma}
\newtheorem{lemma-definition}[theorem]{Lemma/Definition}
\newtheorem{corollary}[theorem]{Corollary}
\newtheorem{remark}[theorem]{\it Remark}
\newtheorem{remark-notation}[theorem]{\it Remark/Notation}
\newtheorem{application-lemma}[theorem]{Application/Lemma}
\newtheorem{example}[theorem]{Example}
\newtheorem{example-definition}[theorem]{Example/Definition}
\newtheorem{notation}[theorem]{Notation}
\newtheorem{definition-prototype}[theorem]{Definition-Prototype}
\numberwithin{equation}{subsection}
\newtheorem{sproto-definition}[stheorem]{Proto-Definition}
\newtheorem{spseudo-definition}[stheorem]{Pseudo-Definition}
\newtheorem{sdefinition-lemma}[stheorem]{Definition/Lemma}
\newtheorem{sdefinition-explanation}[stheorem]{Definition/Explanation}
\newtheorem{sexplanation-definition}[stheorem]{Explanation/Definition}
\newtheorem{sdefinition-fact}[stheorem]{Definition/Fact}
\newtheorem{sdefinition-notation}[stheorem]{Definition/Notation}
\newtheorem{sdefinition-conjecture}[stheorem]{Definition/Conjecture}
\newtheorem{sdefinition-theorem}[stheorem]{Definition/Theorem}
\newtheorem{slemma-definition}[stheorem]{Lemma/Definition}
\newtheorem{sremark-notation}[stheorem]{\it Remark/Notation}
\newtheorem{sapplication-lemma}[stheorem]{Application/Lemma}
\newtheorem{sexample-definition}[stheorem]{Example/Definition}
\newtheorem{sdefinition-prototype}[stheorem]{Definition-Prototype}
\newtheorem{sstheorem}{Theorem}[subsubsection]
\newtheorem{ssdefinition}[sstheorem]{Definition}
\newtheorem{ssproto-definition}[sstheorem]{Proto-Definition}
\newtheorem{sspseudo-definition}[sstheorem]{Pseudo-Definition}
\newtheorem{ssdefinition-lemma}[sstheorem]{Definition/Lemma}
\newtheorem{ssdefinition-explanation}[sstheorem]{Definition/Explanation}
\newtheorem{ssexplanation-definition}[sstheorem]{Explanation/Definition}
\newtheorem{ssdefinition-fact}[sstheorem]{Definition/Fact}
\newtheorem{ssdefinition-notation}[sstheorem]{Definition/Notation}
\newtheorem{ssdefinition-conjecture}[sstheorem]{Definition/Conjecture}
\newtheorem{ssdefinition-theorem}[sstheorem]{Definition/Theorem}
\newtheorem{sslemma}[sstheorem]{Lemma}
\newtheorem{sslemma-definition}[sstheorem]{Lemma/Definition}
\newtheorem{sscorollary}[sstheorem]{Corollary}
\newtheorem{ssremark}[sstheorem]{\it Remark}
\newtheorem{ssremark-notation}[sstheorem]{\it Remark/Notation}
\newtheorem{ssapplication-lemma}[sstheorem]{Application/Lemma}
\newtheorem{ssexample-definition}[sstheorem]{Example/Definition}
\newtheorem{ssdefinition-prototype}[sstheorem]{Definition-Prototype}
\newcommand{\Aut}{\mbox{\it Aut}\,}
\newcommand{\End}{\mbox{\it End}\,}
\newcommand{\Endsheaf}{\mbox{\it ${\cal E}\!$nd}\,}
\newcommand{\Hom}{\mbox{\it Hom}\,}
\newcommand{\Homsheaf}{\mbox{\it ${\cal H}$om}\,}
\newcommand{\Id}{\mbox{\it Id}\,}
\newcommand{\Image}{\mbox{\it Im}\,}
\newcommand{\SO}{\mbox{\it SO}\,}
\newcommand{\Span}{\mbox{\it Span}\,}
\newcommand{\Spec}{\mbox{\it Spec}\,}
 \newcommand{\boldSpec}{\mbox{\it\bf Spec}\,}
\newcommand{\Sym}{\mbox{\it Sym}}
\newcommand{\anticommuting}{\mbox{\scriptsize\it anti-c}}
\newcommand{\determinant}{\mbox{\it det}\,}
\newcommand{\dimm}{\mbox{\it dim}\,}
\newcommand{\even}{\mbox{\scriptsize\rm even}\,}
\newcommand{\tinyeven}{\mbox{\tiny\rm even}\,}
\newcommand{\odd}{\mbox{\scriptsize\rm odd}\,}
\newcommand{\pr}{\mbox{\it pr}}
\newcommand{\boldy}{\mbox{\boldmath $y$}}
  \newcommand{\scriptsizeboldy}{\mbox{\scriptsize\boldmath $y$}}
  \newcommand{\tinyboldy}{\mbox{\tiny\boldmath $y$}}
\newcommand{\rightaarrow}{\rightarrow\hspace{-2ex}\rightarrow}
\begin{document}

\enlargethispage{24cm}

\begin{titlepage}

$ $

\vspace{-1.5cm} % Re: -1.5cm for PC; -2.5cm for UT-Math-system

\noindent\hspace{-1cm}
\parbox{6cm}{\small August 2017}\
   \hspace{7cm}\
   \parbox[t]{6cm}{yymm.nnnnn [math.DG] \\
                D(11.4.1): smooth map; super\\
                     % $\mbox{\hspace{3.8em}} $
					 % ????????
				}

%\vspace{2em}
\vspace{2cm}

%title
\centerline{\large\bf
 Further studies of}
\vspace{1ex}
\centerline{\large\bf
 the notion of differentiable maps from Azumaya/matrix supermanifolds}
\vspace{1ex}
\centerline{\large\bf
 I.\ The smooth case:} 
\vspace{1ex}
\centerline{\large\bf 
 Ramond-Neveu-Schwarz and Green-Schwarz meeting Grothendieck}

%\bigskip
%\vspace{2.4em}
\vspace{3em}

%authors-'n-addresses
\centerline{\large
  Chien-Hao Liu
            \hspace{1ex} and \hspace{1ex}
  Shing-Tung Yau
}

%\vspace{2em}
%\vspace{3em}
\vspace{4em}

%abstract%
\begin{quotation}
\centerline{\bf Abstract}

\vspace{0.3cm}

\baselineskip 12pt  %13pt for [12pt] style
{\small
 In this sequel to works
    D(11.1) (arXiv:1406.0929 [math.DG]), 
    D(11.2) (arXiv:1412.0771 [hep-th]), and 
    D(11.3.1) (arXiv:1508.02347 [math.DG]),
 we re-examine --- and reformulate when in need --- several basic notions in super $C^{\infty}$-algebraic geometry
   as guided by the mathematical formulation
      of Ramond-Neveu-Schwarz fermionic strings  and of Green-Schwarz fermionic strings
    from the viewpoint of Grothendieck on Algebraic Geometry.
 Two theorems
     that are the super counterpart of Theorem~3.1.1 and Theorem~3.2.1 of D(11.3.1)
   are proved.
 They unify the notion of
   `smooth maps from an Azumaya/matrix super smooth manifold with a fundamental module to a super smooth manifold'
  introduced in D(11.2),
   making it a complete super parallel to the setting for D-branes
     in the realm of algebraic geometry in
	  D(1) (arXiv:0709.1515 [math.AG]) and
	  D(2) (arXiv:0809.2121 [math.AG]),    and 	
     in the realm of differential or $C^{\infty}$-algebraic geometry in D(11.1) and D(11.3.1).	 	
 A prototypical definition of dynamical fermionic stacked D-brane world-volume on a space-time
    in the same spirit of RNS fermionic strings or GS fermionic strings is thus laid down.	
 Similar to D(11.3.1), which paved the path to
      the construction of non-Abelian Dirac-Born-Infeld action (D(13.1) (arXiv:1606.08529 [hep-th]))
	 and the standard action (D(13.3) (arXiv:1704.03237 [hep-th])) for fundamental bosonic stacked D-branes,
 the current notes shall serve the same for the construction of supersymmetric action
  for fundamental fermionic stacked D-branes of various dimensions --- a theme of another subseries of the D-project.
 A notion of ``noncommutative $C^{\infty}$-rings" and `morphism' between them
  is introduced at the end as a byproduct.
} % end-small
\end{quotation}

% \smallskip
% \bigskip
\vspace{4em}

\baselineskip 12pt
{\footnotesize
\noindent
{\bf Key words:} \parbox[t]{14cm}
{fermionic D-brane;
 Azumaya/matrix supermanifold, super $C^{\infty}$-scheme,
  $C^{\infty}$-map,\\ general super $C^{\infty}$-ring-homomorphism;
 spectral subscheme,
 Malgrange Division Theorem;\\
 $C^{\infty}$-admissible noncommutative ring, $C^{\infty}$-hull, morphism
 }} %end-footnotesize

%\smallskip
 \bigskip

\noindent {\small MSC number 2010: 58A40, 14A22, 81T30; 51K10, 16S50, 46L87.
} % end-small

% \smallskip
\bigskip

\baselineskip 10pt
% Re: 11pt for [11pt] style; 12pt for [12pt] style
{\scriptsize
\noindent{\bf Acknowledgements.}
We thank Andrew Strominger and Cumrun Vafa
   for  influence to our understanding of strings, branes, and gravity.
C.-H.L.\ thanks in addition
 Philip Engel, Karsten Gimre, David Nelson, Adrian Ocneanu
    for topic courses, fall 2017, while editing the notes;
 Jia-Chie Lee
     for her motivating life story in late spring,
 Yu-Chun Liu
     for a special mid-summer,
 Master Guo-Guang Shr for a meeting in late summer,  	
  while in various stages of the work;
 Ling-Miao Chou
     for comments on illustrations and moral support.
S.-T.Y.\ thanks in addition
 Mathematical Science Center and Department of Mathematical Sciences, Tsinghua University, Beijing, China,
     for hospitality. 	
The project is supported by NSF grants DMS-9803347 and DMS-0074329.
} %endscriptsize

\end{titlepage}

\newpage

\begin{titlepage}

$ $

%\vspace{2em}
%\vspace{4em}
\vspace{12em}

\centerline{\small\it
 Chien-Hao Liu dedicates the current notes to former librarians}
\centerline{\small\it
 Diane Pochini and Martha Wooster,}
\centerline{\small\it
 and the several student assistants for the evenings and weekends}
\centerline{\small\it
  who come and go and whose name I never know,}
\centerline{\small\it
 of the Blue Hill/Gordon McKay Library for Meteorology, Engineer, and Applied Sciences}
\centerline{\small\it
 (closed permanently in late December 2016) on the third floor of the Pierce Building}
\centerline{\small\it
 for their nodding smiles over these years}
\centerline{\small\it
 and for keeping a very quiet library in an isolated corner of a campus building,}
\centerline{\small\it
 where many ideas in the first decade of the D-project were first conceived.}

% \vspace{24em}
%
% \baselineskip 11pt
%
% {\footnotesize
% \noindent
% (From C.H.L.)
%  ????????????????.
%  } % end-footnotesize

\end{titlepage}

%paper

\newpage
$ $

\vspace{-3em}
% \vspace{-4em}  % Re: -4cm for PC; -6cm for UT-Math-system

%short heading
\centerline{\sc
  Maps from Azumaya/Matrix Supermanifolds I: Smooth Case
 } %

\vspace{2em}

% \baselineskip 14pt  %Re: 14pt for [11pt] style
                                      %Re: 15pt for [12pt] style.

\begin{flushleft}
{\Large\bf 0. Introduction and outline}
\end{flushleft}
In this sequel to
 [L-Y2] (D(11.1)),
 [L-Y3] (D(11.2)), and
 [L-Y4] (D(11.3.1)),
 we re-examine --- and reformulate when in need --- several basic notions in super $C^{\infty}$-algebraic geometry
   as guided by the mathematical formulation
      of Ramond-Neveu-Schwarz fermionic strings  and of Green-Schwarz fermionic strings
    from the viewpoint of Grothendieck on Algebraic Geometry (Sec.$\:$1).	
Two theorems
     that are the super counterpart of [L-Y4: Theorem~3.1.1 \& Theorem~3.2.1] (D(11.3.1))
   are proved (Sec.$\:$2).
They unify the notion of
   `smooth maps from an Azumaya/matrix super smooth manifold with a fundamental module to a super smooth manifold'
  introduced in [L-Y3] (D(11.2)),
   making it a complete super parallel to the setting for D-branes
     in the realm of algebraic geometry in
	  [L-Y1] ( D(1)) and
	  [L-L-S-Y] (D(2)),    and 	
     in the realm of differential or $C^{\infty}$-algebraic geometry in [L-Y2] (D(11.1)) and [L-Y4] (D(11.3.1));
  (cf.\ Sec.$\:$3.1).
A prototypical definition of a dynamical fermionic stacked D-brane world-volume on a space-time
    in the same spirit of RNS fermionic strings or GS fermionic strings is thus laid down (Definition-Prototype~3.1.1).	
Similar to [L-Y4] (D(11.3.1)), which paved the path to
      the construction of non-Abelian Dirac-Born-Infeld action [L-Y5] (D(13.1))
	 and the standard action [L-Y7] (D(13.3)) for a fundamental bosonic stacked D-brane,
 the current notes shall serve the same for the construction of a supersymmetric action
  for fundamental fermionic stacked D-branes in various dimensions --- a theme of another subseries of the D-project.

As a byproduct from the study, we introduce a notion of ``noncommutative $C^{\infty}$-rings"
 and `morphism' between them, which covers all we have ran into in the project
(Sec.$\:$3.2).

\bigskip

\bigskip

\noindent
{\bf Convention.}
 References for standard notations, terminology, operations and facts in
    (1) algebraic geometry;
    (2) synthetic geometry, $C^{\infty}$-algebraic geometry;
    (3) string theory and D-branes;
	(4) supersymmetry
 can be found respectively in
    (1) [Ha];
    (2) [Du], [Jo], [Ko], [M-R];
    (3) [G-S-W], [Po];
	(4) [S-W], [West]. [Wi], [W-B].
 \begin{itemize}
  \item[$\cdot$]
   For clarity, the {\it real line} as a real $1$-dimensional manifold is denoted by ${\Bbb R}^1$,
    while the {\it field of real numbers} is denoted by ${\Bbb R}$.
   Similarly, the {\it complex line} as a complex $1$-dimensional manifold is denoted by ${\Bbb C}^1$,
    while the {\it field of complex numbers} is denoted by ${\Bbb C}$.
	
  \item[$\cdot$]	
  The inclusion `${\Bbb R}\hookrightarrow{\Bbb C}$' is referred to the {\it field extension
   of ${\Bbb R}$ to ${\Bbb C}$} by adding $\sqrt{-1}$, unless otherwise noted.

  \item[$\cdot$]
   The {\it complexification} of an ${\Bbb R}$-module $M$ is denoted by
    $M^{\Bbb C}\;(:= M\otimes_{\Bbb R}{\Bbb C})$.

 \item[$\cdot$]	
  The {\it real $n$-dimensional vector spaces} ${\Bbb R}^{\oplus n}$
      vs.\ the {\it real $n$-manifold} $\,{\Bbb R}^n$; \\
  similarly, the {\it complex $r$-dimensional vector space ${\Bbb C}^{\oplus r}$}
     vs.\ the {\it complex $r$-fold} $\,{\Bbb C}^r$.

 \item[$\cdot$]
  All $C^{\infty}$-manifolds
     are paracompact, Hausdorff, admitting a (locally finite) partition of unity,
     and embeddable into some ${\Bbb R}^N$ as closed smooth submanifolds.
  We adopt the {\it index convention for tensors} from differential geometry.
   In particular, the tuple coordinate functions on an $n$-manifold is denoted by, for example,
   $(y^1,\,\cdots\,y^n)$.
  However, no up-low index summation convention is used.

  \item[$\cdot$]
   `{\it smooth}' $=C^{\infty}$;
    the set (or group, or ring, or module) of smooth sections of a bundle or sheaf is denoted by
	 $C^{\infty}(\,\mbox{\LARGE $\cdot$}\,)$.

  \item[$\cdot$]
   $\Spec R $ ($:=\{\mbox{prime ideals of $R$}\}$)
         of a commutative Noetherian ring $R$  in algebraic geometry\\
   vs.\ $\Spec R$ of a $C^{\infty}$-ring $R$
  ($:=\Spec^{\Bbb R}R :=\{\mbox{$C^{\infty}$-ring homomorphisms $R\rightarrow {\Bbb R}$}\}$).

  \item[$\cdot$]
  {\it morphism} between schemes in algebraic geometry
    vs.\ {\it $C^{\infty}$-map} between $C^{\infty}$-manifolds or $C^{\infty}$-schemes
         	in differential topology, differential geometry, and $C^{\infty}$-algebraic geometry.
			
  \item[$\cdot$]			
   {\it matrix} $m$ vs.\ manifold of {\it dimension} $m$.
   
  \item[$\cdot$]
  {\it coordinate tuple} $(y^1,\,\cdots\,,\, y^n)$
    vs.\ {\it ideal} $(y^1,\,\cdots\,,\, y^n)$ generated by $y^1,\,\cdots\,,\, y^n$.
\end{itemize}

\bigskip
%\newpage

\begin{flushleft}
{\bf Outline}
\end{flushleft}
\nopagebreak
{\small
 \baselineskip 12pt  %13pt
 \begin{itemize}
    \item[1]
     From fermionic strings to general morphisms in super $C^{\infty}$-algebraic geometry:\\
	 Ramond-Neveu-Schwarz and  Green-Schwarz meeting Grothendieck
	  \vspace{-.6ex}
	  \begin{itemize}
	   \item[1.1]
	    Fermionic strings from the aspect of Grothendieck's modern Algebraic Geometry
		\begin{itemize}
		 \item[\Large $\cdot$\;]
          Ramond-Neveu-Schwarz and  Green-Schwarz meeting Grothendieck
		
		 \item[\Large $\cdot$\;]
          An issue on morphisms between superrings brought out by RNS fermionic strings	
        \end{itemize}
		
	   \item[1.2]	
		Extensions of $C^{\infty}$-ring structure and $C^{\infty}$-ring-homomorphism
		
	   \item[1.3]
	    Basics of super algebraic geometry
  
       \item[1.4]
	    Super $C^{\infty}$-rings, super $C^{\infty}$-schemes, and general morphisms
	  \end{itemize}
  
   \item[2]
     A further study of the notion of smooth maps from an Azumaya/matrix supermanifold
	  \vspace{-.6ex}
	  \begin{itemize}
	   \item[2.1]
	    The setup and the statement of two main theorems
		 \begin{itemize}
		  \item[\Large $\cdot$\;]
		   Azumaya/matrix super $C^{\infty}$-manifolds with a fundamental module
		 
		  \item[\Large $\cdot$\;]
		   Two Main Theorems 
		     on maps from $(\widehat{X}_{[s_1]}, \widehat{E})$ to $\widehat{Y}_{[s_2]}$
		 \end{itemize}
		 		 
	   \item[2.2]
		Preliminaries on endomorphisms and primary decompositions
		 \begin{itemize}		
		  \item[2.2.1]
	       Endomorphisms of a free module over a complex Grassmann algebra
			 \vspace{.6ex}
		     \begin{itemize}
			  \item[\Large $\cdot$\;]
		       The automorphism group $\mbox{\it Aut$\,$}_{\widehat{\Bbb C}_{[s]}}(\widehat{E})$
		       of $\widehat{E}$	
			   
		      \item[\Large $\cdot$\;]
		       Primary decomposition of $\widehat{E}$
		       under an $\widehat{m}\in \mbox{\it End$\,$}_{\widehat{\Bbb C}_{[s]}}(\widehat{E})$
		
		      \item[\Large $\cdot$\;]
			   Primary decomposition of $\widehat{E}$ under a commuting system of endomorphisms					   
	         \end{itemize}
		
          \vspace{.3ex}		
		  \item[2.2.2]
            Generalization of Sec.$\:$2.2.1 to
            $C^{\infty}(\mbox{\it End$\,$}_{\widehat{\Bbb C}_{[s]}}(\widehat{E}))$
            for general $X$			
         \end{itemize}
   		 
       \item[2.3]	
	    $C^{\infty}$-maps from an Azumaya/matrix superpoint to a real supermanifold
         \begin{itemize}		
		  \item[2.3.1]
		   Proof of Theorem~2.1.5 when $X$ is a point
                      % Theorem [every ring-homomorphism in question $C^{\infty}$-admissible]

		  \item[2.3.2]
           Proof of Theorem~2.1.8 when $X$ is a point
                    % Theorem  [ring-homomorphism to
	                %                    $C^{\infty}(\End_{\widehat{\Bbb C}_{[s_1]}}(\widehat{E}))$]
         \end{itemize}
	  	  	    	
	   \item[2.4]
	    $C^{\infty}$-maps from an Azumaya/matrix supermanifold to a real supermanifold	
         \begin{itemize}		
		  \item[2.4.1]
		   Proof of Theorem~2.1.5
                      % Theorem [every ring-homomorphism $C^{\infty}$-admissible]

		  \item[2.4.2]
           Proof of Theorem~2.1.8
                    % Theorem  [ring-homomorphism to
	                %                    $C^{\infty}(\End_{\widehat{\Bbb C}_{[s_1]}}(\widehat{E}))$]
         \end{itemize}
	  \end{itemize}	
	
	\item[3]
	 Remarks on fermionic D-branes and on ``noncommutative $C^{\infty}$-rings" after the study
	  \vspace{-.6ex}
	  \begin{itemize}
	   \item[3.1]
	    Fermionic D-branes as dynamical objects \`{a} la RNS or GS fermionic strings
	
	   \item[3.2]
	    Remarks on the notion of `$C^{\infty}$-admissible noncommutative rings'
      \end{itemize}
 \end{itemize}
} % end-small

\newpage

\section{From fermionic strings to general morphisms in super\\ $C^{\infty}$-algebraic geometry:
	               Ramond-Neveu-Schwarz and\\  Green-Schwarz meeting Grothendieck}
				
In this section we pick up where we were in [L-Y3: Sec.$\:$5.1] (D(11.2)) and
 review
  how one should think of fermionic strings from the aspect of Grothendieck's formulation of Algebraic Geometry
 (Sec.$\:$1.1).
This brings out a notion of `{\sl general morphism}' between superrings
  (resp.\ super $C^{\infty}$-rings, super $C^{\infty}$-schemes) (Sec.$\:$1.3 and Sec.$\:$1.4),
  if one wants to take physicists' notion of supersymmetry into account and
     generalize the notion of fermionic strings to fermionic D-branes as fundamental dynamical objects in string theory.
Some basic results we need on extensions of super $C^{\infty}$-ring structure and general morphisms are presented
 in Sec.$\:$1.2.

\bigskip

\subsection{Fermionic strings from the aspect of Grothendieck's modern Algebraic Geometry}

Fermionic strings are fundamental/dynamical objects in superstring theory.
There are two formulations of fermionic strings (either open or closed):
\begin{itemize}
  \item[(1)] {\it Ramond-Neveu-Schwarz} ({\it RNS}) {\it fermionic string},
   for which world-sheet spinors and world-sheet supersymmetry are manifestly involved
   ([N-S] of  Andr\'{e} Neveu and John Schwarz
   and [Ra] of Pierre Ramond);
   
  \item[(2)]  {\it Green-Schwarz} ({\it GS}) {\it fermionic string},
   for which space-time spinors and space-time supersymmetry are manifestly involved
   ([G-S] of Michael Green and John Schwarz).
\end{itemize}
Mathematicians are referred particularly to [G-S-W: Chap.\  4 \& Chap.\ 5] of Green, Schwarz, and Witten
 for thorough explanations.

\bigskip

\begin{flushleft}
{\bf  Ramond-Neveu-Schwarz and  Green-Schwarz meeting Grothendieck}
\end{flushleft}
We now relook at each from the viewpoint of Grothendieck's Algebraic Geometry.
The discussion here follows [G-S-W: Chap.\ 4 \& Chap.\ 5]
   (with possibly some mild change of notations to be compatible with the current notes)  and [Ha: Chap.\ II].
Let
  ${\Bbb M}^{(d-1)+1}$   be the $d$-dimensional Minkowski space-time with coordinates
    $y:= (y^{\mu})_{\mu}=(y^0, y^1,\,\cdots\,, y^{d-1})$ and
  $\Sigma\simeq {\Bbb R}^1\times S^1$ or ${\Bbb R}^1\times [0,2\pi]$ be a string world-sheet 	
   with coordinates $\sigma:=(\sigma^0,\sigma^1)$.
   
\bigskip

\noindent
$(a)$ {\it Ramond-Neveu-Schwarz} ({\it RNS}) {\it fermionic string}

\medskip

\noindent
In this setting,
 there are both bosonic (world-sheet scalar) fields $y^{\mu}(\sigma)$
   and fermionic (world-sheet spinor) fields $\psi^{\mu}(\sigma)$
 on the string world-sheet $\Sigma$ for $\mu=0,1, \,\cdots\,, d-1$.
The former  collectively describe a map $f:\Sigma \rightarrow {\Bbb M}^{(d-1)+1}$ and
the latter as its superpartner.

Consider the supermanifold $\widehat{\Sigma}$ that have the same topology as $\Sigma$
 but with additional Grassmann coordinates $\theta := (\theta^A)_A =(\theta^1, \theta^2)$
   forming $2$-component Majorana spinor on $\Sigma$.
Then, after adding auxiliary (nondynamical) fields $B^{\mu}(\sigma)$ to the world-sheet,
these fields on $\Sigma$ can be grouped to superfields:(Cf.\ [G-S-W: Sec.\ 4.1.2; Eq.\ (4.1.16)].)
 $$
	 Y^{\mu}(\sigma) \;
		=\; y^{\mu}(\sigma)\,+\,\bar{\theta}\psi^{\mu}(\sigma)\,
		        +\, \frac{1}{2}\,\bar{\theta}\theta\,B^{\mu}(\sigma)\,.		
 $$	
  
 {From} the viewpoint of Grothendieck's Algebraic Geometry,
  a map $\widehat{f} : \widehat{\Sigma}\rightarrow {\Bbb M}^{(d-1)+1}$ is specified
  contravariantly by a homomorphism
   $$
    \begin{array}{ccccc}
      \widehat{f}^{\sharp}  &  :
	    & C^{\infty}({\Bbb M}^{(d-1)+1})
        & \longrightarrow     &   C^{\infty}(\widehat{\Sigma})\\[1.2ex]
	 && y^{\mu}  & \longmapsto   & \widehat{f}^{\sharp}(y^{\mu})
	\end{array}
   $$
   of the function-rings in question.
 Since $C^{\infty}(\widehat{\Sigma})=C^{\infty}(\Sigma)[\theta^1,\theta^2]$
    (with $\theta^1$, $\theta^2$ anticommuting)
     a superpolynomial ring over $C^{\infty}(\Sigma)$,
 $\hat{f}^{\sharp}(y^{\mu})$ must be of the form
  $$
    \widehat{f}^{\sharp}(y^{\mu})\;
	  =\; f^{\mu}(\sigma)\,
	          +\,\bar{\theta}\psi^{\mu}(\sigma)\,
		          +\, \frac{1}{2}\,\bar{\theta}\theta\,B^{\mu}(\sigma)\,,
  $$
  which is exactly the previous quoted expression [G-S-W: Eq.\ (4.1.16)].
   % for defining a map from a RNS fermionic string world-sheet to ${\Bbb M}^{(d-1)+1}$.
 In conclusion,
  \begin{itemize}
   \item[{\Large $\cdot$}]
    {\it A Ramond-Neveu-Schwarz fermionic string moving in a Minkowski space-time ${\Bbb M}^{(d-1)+1}$
	    as studied in {\rm [G-S-W: Chap.\ 4]}
      can be described by
	   a map $\widehat{f}: \widehat{\Sigma} \rightarrow {\Bbb M}^{(d-1)+1}$
	    in the sense of Grothendieck's Algebraic Geometry.}
  \end{itemize}
   
\bigskip

\noindent
$(b)$ {\it Green-Schwarz} ({\it GS}) {\it fermionic string}

\medskip

\noindent
In this setting,
 in addition to the ordinary bosonic (world-sheet scalar) fields
   $y^{\mu}(\sigma)$, $\mu=0,1,\,\cdots\,, d-1$, on $\Sigma$
   that collectively describe a map $f: \Sigma\rightarrow {\Bbb M}^{(d-1)+1}$,
 there are also a set of {\it world-sheet scalar yet mutually anticommuting} fields
   $\theta^{Aa}(\sigma)$, $A=1,\,\cdots\,, N$ and $a=1,\,\cdots\,, s$, on $\Sigma$.
Here $s$ is the dimension of a spinor representation of the Lorentz group $\SO(d-1,1)$
 of the target Minkowski space-time ${\Bbb M}^{(d-1)+1}$.

{\it Differential geometrically} intuitively, one would think of these (world-sheet scalar) fields on $\Sigma$
 collectively as follows:
 \begin{itemize}
  \item[{\Large $\cdot$}]
   Let $\widehat{\Bbb M}^{(d-1)+1}$ be a superspace
     with coordinates
	   the original coordinates $y:=(y^{\mu})_{\mu}$ of ${\Bbb M}^{(d-1)+1}$
	  and additional anticommuting coordinates $\theta^{Aa}$,
	     $A=1,\,\cdots\,, N$ and $a=1,\,\cdots\,, s$,
     such that
   	 each tuple $(\theta^{A1}, \,\cdots\,, \theta^{As})$, $A=1,\,\cdots\,,N$,
   	   is in a spinor representation of the Lorentz group $\SO(d-1,1)$,
     	   the symmetry of the space-time ${\Bbb M}^{(d-1)+1}$ .
   Note that $\widehat{\Bbb M}^{(d-1)+1}\simeq {\Bbb R}^{d|Ns}$ as supermanifolds.
	
  \item[{\Large $\cdot$}]
   The collection $(y^{\mu}(\sigma), \theta^{Aa}(\sigma))_{\mu, A, a}$
    of (world-sheet scalar) fields on $\Sigma$ describe collectively
    a map $\widehat{f}: \Sigma \rightarrow \widehat{\Bbb M}^{(d-1)+1}$.
  In other words,
  a Green-Schwarz fermionic string moving in ${\Bbb M}^{(d-1)+1}$  is described
   by a map from an ordinary world-sheet to a super-Minkowski space-time.
 \end{itemize}
 
However, {\it algebraic geometrically} some revision to this naive differential geometric picture has to be made.
 \begin{itemize}
  \item[{\Large $\cdot$}]
   One would like a contravariant  equivalence between spaces and their function-ring:
    $$
	 \begin{array}{ccccc}
       \widehat{f}& : & \Sigma  & \longrightarrow & \widehat{\Bbb M}^{(d-1)+1}
	 \end{array}
    $$
   with
   $$
    \begin{array}{cccccl}
      \widehat{f}^{\sharp}& :
	    &  C^{\infty}({\Bbb M}^{(d-1)+1})[\theta^{Aa}\,:\,  1\le A \le N,\, 1\le a\le s]
		& \longrightarrow &  C^{\infty}(\Sigma) \\[1.2ex]
	  &&	y^{\mu}       & \longmapsto     &   y^{\mu}(\sigma) \\[1.2ex]
	  &&    \theta^{Aa}& \longmapsto     &  ?                                         &.
	 \end{array}
   $$
  Here,
    $C^{\infty}({\Bbb M}^{(d-1)+1})[\theta^{Aa}\,:\,  1\le A \le N,\, 1\le a\le s]$
       is the superpolynomial ring over the $C^{\infty}$-ring $C^{\infty}({\Bbb M}^{(d-1)+1})$
	  with anticummuting generators in $\{\theta^{Aa}\}_{A, a}$.
   
  \item[{\Large $\cdot$}]
   The natural candidate for $\widehat{f}(\theta^{Aa})$ is certainly the world-sheet scalar field
     $\theta^{Aa}(\sigma)$ regarded as an element in the function-ring  of $\Sigma$.
   However, the anticommuting nature of fields $\theta^{Aa}$, $1\le A\le N$ and $1\le a\le s$,
    among themselves forbids them to lie in $C^{\infty}(\Sigma)$.

  \item[{\Large $\cdot$}]	
   The way out of this from the viewpoint of Grothendieck's Algebraic Geometry
    is to extend the world-sheet $\Sigma$ also
    to a superworld-sheet $\widehat{\Sigma}$ with the function-ring the superpolynomial ring
    $C^{\infty}(\Sigma)[\theta^{\prime Aa}\,:\, 1\le A\le N,\, 1\le a\le s]$.
	
  \item[{\Large $\cdot$}]	
   One now has a well-defined super-$C^{\infty}$-ring-homomorphism
   $$
    \begin{array}{cccccl}
      \widehat{f}^{\sharp}& :
	   & C^{\infty}({\Bbb M}^{(d-1)+1})[\,\theta^{Aa}\,:\,  A,\, a\,]
	   & \longrightarrow
	   &  C^{\infty}(\Sigma)[\,\theta^{\prime Aa}\,:\, A,\, a\,]       \\[1.2ex]
	  &&	y^{\mu}       & \longmapsto     &   y^{\mu}(\sigma) \\[1.2ex]
	  &&    \theta^{Aa}& \longmapsto     &   \theta^{Aa}(\sigma)                                         &.
	 \end{array}
   $$
   
  \item[{\Large $\cdot$}]
   Furthermore, since all the fields $\theta^{Aa}(\sigma)$ are dynamical,
   in comparison with the setting for the RNS fermionic string, it is reasonable to require in addition that
   $$
    \widehat{f}^{\sharp}(\theta^{Aa})\; =\; \theta^{Aa}(\sigma)\;
	  \in \; \Span_{C^{\infty}(\Sigma)}\{\,\theta^{\prime Aa}\,|\, A, a\,\}\,.
   $$
 \end{itemize}

In conclusion,
  \begin{itemize}
   \item[{\Large $\cdot$}] {\it
    Assuming the notation from the above discussion.
	A Green-Schwarz fermionic string moving in a Minkowski space-time ${\Bbb M}^{(d-1)+1}$
	    as studied in {\rm [G-S-W: Chap.\ 5]}
      can be described in the sense of Grothendieck's Algebraic Geometry	
	  by a map $\widehat{f}: \widehat{\Sigma} \rightarrow  \widehat{\Bbb M}^{(d-1)+1}$,
	   defined by a super-$C^{\infty}$-ring-homomorphism
      $$
       \begin{array}{cccccl}
         \widehat{f}^{\sharp}& :
	      & C^{\infty}({\Bbb M}^{(d-1)+1})[\,\theta^{Aa}\,:\,  A,\, a\,]
	      & \longrightarrow
	      &  C^{\infty}(\Sigma)[\,\theta^{\prime Aa}\,:\, A,\, a\,]       \\[1.2ex]
	      &&	y^{\mu}       & \longmapsto     &   y^{\mu}(\sigma) \\[1.2ex]
	      &&    \theta^{Aa}& \longmapsto     &   \theta^{Aa}(\sigma)                                         
	   \end{array}
      $$
	 such that
      $$
        \widehat{f}^{\sharp}(\theta^{Aa})\; =\; \theta^{Aa}(\sigma)\;
	      \in \; \Span_{C^{\infty}(\Sigma)}\{\,\theta^{\prime Aa}\,|\, A, a\,\}\,.
      $$	
    }\end{itemize}

\bigskip

\begin{flushleft}
{\bf An issue on morphisms between superrings brought out by RNS fermionic strings}
\end{flushleft}
The above Grothendieck-reformat of Ramond-Neveu-Schwarz and Green-Schwarz
 turns the underlying basic mathematical notion of a fermionic string
 contravariantly as a morphism $\widehat{f}^{\sharp}$ between function-rings of  supermanifolds.
However, there is an issue here for RNS fermionic strings.
 \begin{itemize}
  \item[\LARGE $\cdot$]
   {\it Function-rings of supermanifolds are ${\Bbb Z}/2$-graded (i.e.\ even-odd). That is, they are superrings.
   Mathematically most naturally, a morphism between superrings are required to be ${\Bbb Z}/2$-grading preserving.}
   Yet, to have a bosonic-fermionic partner pair of maps (i.e.\ (map, ``mappino")-pair)
     and a world-sheet supersymmetry in the Ramond-Neveu-Schwarz fermionic string,
   $\widehat{f}^{\sharp}$ cannot be ${\Bbb Z}/2$-grading preserving.
   Otherwise,
     since $y^{\mu}$ in the function-ring $C^{\infty}({\Bbb M}^{(d-1)+1})$
	   of the target ${\Bbb M}^{(d-1)+1}$ is even,
     the world-sheet fermionic partner $(\psi^{\mu})_{\mu}$	of the map $f$ has to vanish
	   to keep $\widehat{f}^{\sharp}(y^{\mu})$ even as well.
   This would then leave no room for world-sheet supersymmetric transformations on component fields.  	
 \end{itemize}
(While for Green-Schwarz fermionic string,
     it is no harm to require $\widehat{f}^{\sharp}$ to be ${\Bbb Z}/2$-grading-preserving,
	    though physically one may not have to.)

This suggests that
 if one would like to
   combine the basic setup of D-branes as maps from Azumaya/matrix manifold to a target manifold
 (cf.\ [L-Y2] (D(11.1)))
     with the Ramond-Neveu-Schwarz or Green-Schwarz formulation of ferminic strings
   to give a formulation of fermionic D-branes as fundamental (as opposed to solitonic) objects in string theory
   (cf.\ [L-Y3: Sec.$\:$5] (D(11.2)) for a light initiating glimpse),
one needs to reconsider
  a mathematically sound and ``physically correct" notion of `{\sl morphisms}' between superrings.

\bigskip

\subsection{Extensions of $C^{\infty}$-ring structure and $C^{\infty}$-ring-homomorphism}

Readers are referred to the work [Jo] of Dominic Joyce for the fundamentals of $C^{\infty}$-Algebraic  Geometry.
Here we collect three handy lemmas
  concerning extensions of a $C^{\infty}$-ring structure or a $C^{\infty}$-ring-homomorphism,
 and their immediate corollaries on a $C^{\infty}$-scheme or a morphism.
 
\bigskip

\begin{lemma} {\bf [$C^{\infty}$ evaluation after nilpotent perturbation]}$\;$
 Given a $C^{\infty}$-ring $R$, let
   $r_1,\,\cdots\,,\, r_k \in R$   and
   $n_1,\,\cdots\,,\, n_k$ be nilpotent elements in $R$
      with $n_1^{l+1}=\,\cdots\,= n_k^{l+1}=0$.
 Then,
      for any $h\in C^{\infty}({\Bbb R}^k)$,
   the element $h(r_1+n_1,\,\cdots\,,\,r_k+n_k)\in R$
      from the $C^{\infty}$-ring structure of $R$ is given explicitly by
   $$
     h(r_1+n_1,\,\cdots\,,\,r_k+n_k)\;
	   =\;
	    \sum_{d=0}^{kl}\, \frac{1}{d!}\,
	       \sum_{d_1+\,\cdots\,+d_k=d}
	       (\partial_1^{\,d_1}\,\cdots\,\partial_k^{\,d_k}  h)(r_1,\,\cdots\,,\, r_k)
		     \cdot n_1^{d_1}\,\cdots\,n_k^{d_k}\,,	
   $$
   where
     $\partial_1^{\,d_1}\,\cdots\,\partial_n^{\,d_n}  h \in C^{\infty}({\Bbb R}^k)$
      is the partial derivative of $h$ with respect to the first variable $d_1$-times, the second variable $d_2$-times,
	  ..., and the $k$-th variable $d_k$-times.
\end{lemma}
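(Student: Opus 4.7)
The strategy is to lift the usual finite Taylor expansion from $C^\infty(\mathbb{R}^{2k})$ into $R$ via the $C^\infty$-ring structure, and then use the nilpotency of the $n_i$'s to truncate the remainder.

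First I would recall the standard smooth Taylor identity with integral remainder on $\mathbb{R}^k$: for any $h\in C^\infty(\mathbb{R}^k)$ and any integer $N\ge 0$, there exist smooth functions $h_{\boldsymbol{d}}\in C^\infty(\mathbb{R}^{2k})$, one for each multi-index $\boldsymbol{d}=(d_1,\ldots,d_k)$ with $|\boldsymbol{d}|=d_1+\cdots+d_k=N+1$, such that the identity
\[
 h(x_1+y_1,\ldots,x_k+y_k)\;=\;
  \sum_{|\boldsymbol{d}|\le N}\frac{1}{\boldsymbol{d}!}\,
      (\partial_1^{d_1}\!\cdots\partial_k^{d_k}h)(x_1,\ldots,x_k)\,y_1^{d_1}\!\cdots y_k^{d_k}
  \;+\;\sum_{|\boldsymbol{d}|=N+1} y_1^{d_1}\!\cdots y_k^{d_k}\, h_{\boldsymbol{d}}(x,y)
\]
holds in $C^\infty(\mathbb{R}^{2k})$; this is a consequence of Hadamard's lemma applied $N+1$ times, and it is an identity of smooth functions, not merely a pointwise asymptotic statement.

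Next I would take $N=kl$ and transport this identity into $R$ using the $C^\infty$-ring structure. Concretely, the tuple $(r_1,\ldots,r_k,n_1,\ldots,n_k)\in R^{2k}$ defines, by the universal property of $C^\infty$-rings, a $C^\infty$-ring homomorphism $C^\infty(\mathbb{R}^{2k})\to R$ sending $x_i\mapsto r_i$ and $y_i\mapsto n_i$; applying this homomorphism to the displayed Taylor identity and using that $C^\infty$-ring homomorphisms preserve the $C^\infty$-operations (and in particular preserve products, sums, and the evaluation of $h$, of its partial derivatives, and of the remainder functions $h_{\boldsymbol{d}}$), I obtain the analogue identity inside $R$.

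Finally I would observe that for every multi-index $\boldsymbol{d}$ with $|\boldsymbol{d}|=kl+1$ the pigeonhole principle forces at least one component $d_i\ge l+1$, so that $n_i^{d_i}=0$ by the hypothesis on the nilpotency orders, and hence $n_1^{d_1}\cdots n_k^{d_k}=0$ in $R$; the entire remainder sum collapses to zero, leaving precisely the asserted finite expansion. The main conceptual point — and the only place where anything beyond bookkeeping is used — is the first step: one must insist that the Taylor expansion be a genuine identity in $C^\infty(\mathbb{R}^{2k})$, so that it survives transport through the $C^\infty$-ring homomorphism. Once that is in hand, everything else is routine.
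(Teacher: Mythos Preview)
Your proposal is correct and follows essentially the same route as the paper: both invoke the smooth Taylor formula with remainder as a genuine identity in $C^\infty(\mathbb{R}^{2k})$ (Hadamard's lemma iterated), push it into $R$ via the $C^\infty$-ring structure, and kill the remainder by nilpotency. Your explicit mention of the pigeonhole step and of the evaluation homomorphism $C^\infty(\mathbb{R}^{2k})\to R$ makes the mechanism slightly more transparent than the paper's terse sketch, but the argument is the same.
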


\begin{proof}
 This is an immediate consequence of the fact that being a $C^{\infty}$-ring, $R$ is commutative and
  the Taylor's Formula with Remainder in the following form:
  \begin{itemize}
   \item[\LARGE $\cdot$]
    For any $h\in C^{\infty}({\Bbb R}^k)$ and $l\in {\Bbb Z}_{\ge 1}$,
	 there exist $\bar{h}_{(d_1,\,\cdots\,,\, d_k)}\in C^{\infty}({\Bbb R}^{2k})$,
       $d_i\in {\Bbb Z}_{\ge 0}$ with $d_1+\,\cdots\,+d_k=kl+1$,  	
	 such that
	  {\footnotesize
	  \begin{eqnarray*}
	   \lefteqn{h(y^1,\,\cdots\,,\,y^k) \,-\, h(x^1,\,\cdots\,, x^k)   } \\
	   &&
	    =\; \sum_{d=1}^{kl}\, \frac{1}{d!}\,
	              \sum_{d_1+\,\cdots\,+d_k=d}
	              (\partial_1^{\,d_1}\,\cdots\,\partial_k^{\,d_k}  h)(x^1,\,\cdots\,,\, x^k)
		               \cdot (y^1-x^1)^{d_1}\,\cdots\,(y^k-x^k)^{d_k}   \\
        && \hspace{2em}
		        +\;  \frac{1}{(kl+1)!}
				        \sum_{d_1+\,\cdots\,+d_k=kl+1}
	                       \bar{h}_{(d_1,\,\cdots\,,\, d_k)}(x^1,\,\cdots\,,\, x^k, y^1,\,\cdots\,,\, y^k)
		                    \cdot(y^1-x^1)^{d_1}\,\cdots\,(y^k-x^k)^{d_k}\,.            		
      \end{eqnarray*}}
  \end{itemize}
\end{proof}

\bigskip

\begin{lemma} {\bf [extension of $C^{\infty}$-ring structure]}$\;$
 Let
  $R$ be a $C^{\infty}$-ring and
  $S= R\oplus N$ be a commutative ${\Bbb R}$-algebra
     with $N^{l+1}=0$ for some $l\in {\Bbb Z}_{\ge 1}$.
 Then, $S$ admits a unique $C^{\infty}$-ring structure
   such that
    both the built-in ring-monomorphism $R\hookrightarrow S$ and the built-in ring-epimorphism $S\rightaarrow R$
    are $C^{\infty}$-ring-homomorphisms.
\end{lemma}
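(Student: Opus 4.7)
The plan is to use Lemma 1.2.1 as the blueprint: it tells us what any extension \emph{must} look like, so we take its formula as the definition and then verify axioms.

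\textbf{Step 1 (definition of the $C^\infty$-structure).}  First I would define, for every $h\in C^\infty(\mathbb{R}^k)$ and every tuple $s_1=r_1+n_1,\dots,s_k=r_k+n_k$ in $S=R\oplus N$, the element
\[
 \Phi_h(s_1,\dots,s_k)\;:=\;
 \sum_{d=0}^{kl}\frac{1}{d!}\sum_{d_1+\cdots+d_k=d}
   (\partial_1^{d_1}\cdots\partial_k^{d_k}h)(r_1,\dots,r_k)\cdot n_1^{d_1}\cdots n_k^{d_k},
\]
where the coefficient $(\partial_1^{d_1}\cdots\partial_k^{d_k}h)(r_1,\dots,r_k)$ is evaluated using the already-given $C^\infty$-ring structure on $R$.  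The sum is finite because $n_i^{l+1}=0$, and the product of the $n_i^{d_i}$ lies in $N^d\subseteq S$, so the whole expression is well defined in $S$.

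\textbf{Step 2 (verification of the $C^\infty$-axioms).}  I need to check two things: that $\Phi$ is compatible with the tautological coordinate projections $\pi_i:\mathbb{R}^k\to\mathbb{R}$ (which forces $\Phi_{\pi_i}(s_1,\dots,s_k)=s_i$; this is immediate from the formula), and that $\Phi$ commutes with composition of smooth maps, i.e.\ for $g_1,\dots,g_m\in C^\infty(\mathbb{R}^k)$ and $h\in C^\infty(\mathbb{R}^m)$,
\[
 \Phi_{h\circ(g_1,\dots,g_m)}(s_1,\dots,s_k)\;=\;
 \Phi_h\!\bigl(\Phi_{g_1}(s_1,\dots,s_k),\dots,\Phi_{g_m}(s_1,\dots,s_k)\bigr).
\]
This is the main technical point and the principal obstacle.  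The right-hand side again has the shape ``element of $R$ plus nilpotent perturbation'', so I can apply the formula in Step~1 \emph{to itself}: expanding $\Phi_h$ around the bosonic parts $g_j(r_1,\dots,r_k)$ and matching with the expansion of $\Phi_{h\circ(g_1,\dots,g_m)}$ around $(r_1,\dots,r_k)$ reduces the identity to the multivariate Faà di Bruno (chain-rule) formula for iterated partial derivatives of a composition.  Since the identity one needs is an identity of formal polynomials in the $n_i$ with coefficients expressed in $R$ using only the $C^\infty$-operations already available on $R$, and the same identity holds on the nose when all $s_i$ are ordinary real numbers (classical multivariable chain rule), it must hold in $S$: this is essentially the statement that Lemma~1.2.1 applied to the $C^\infty$-ring $R$ faithfully encodes the Taylor expansion of a composition.

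\textbf{Step 3 (the inclusion and projection).}  For $r\in R\subseteq S$ we have $n=0$, so $\Phi_h(r_1,\dots,r_k)=h(r_1,\dots,r_k)$, which shows $R\hookrightarrow S$ is a $C^\infty$-ring homomorphism.  For the projection $\pi:S\twoheadrightarrow R$, $\pi(s_i)=r_i$, and the $d=0$ term of the formula for $\Phi_h$ is $h(r_1,\dots,r_k)$ while all other terms contain a factor $n_1^{d_1}\cdots n_k^{d_k}$ with $d\ge 1$, hence lie in $N=\ker\pi$.  Therefore $\pi(\Phi_h(s_1,\dots,s_k))=h(r_1,\dots,r_k)=h(\pi s_1,\dots,\pi s_k)$, as required.

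\textbf{Step 4 (uniqueness).}  Suppose $S$ carries any other $C^\infty$-ring structure compatible with the inclusion and the projection.  Writing $s_i=r_i+n_i$ with $r_i\in R$ and $n_i\in N$ nilpotent of order $\le l+1$, Lemma~1.2.1 applied to the $C^\infty$-ring $S$ itself forces $h(s_1,\dots,s_k)$ to be given precisely by the expression defining $\Phi_h$, once we use that the coefficients $(\partial_1^{d_1}\cdots\partial_k^{d_k}h)(r_1,\dots,r_k)$ computed in $S$ agree with those computed in $R$ because $R\hookrightarrow S$ is a $C^\infty$-ring homomorphism.  Hence the structure coincides with the one constructed in Step~1, proving uniqueness.
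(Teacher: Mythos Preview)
Your proof is correct and follows the same approach as the paper: define the $C^{\infty}$-structure on $S$ by the Taylor-type formula from Lemma~1.2.1, check that the inclusion and projection are $C^{\infty}$-ring homomorphisms, and invoke Lemma~1.2.1 for uniqueness. The paper's proof is terser---it simply asserts that the formula ``defines a $C^{\infty}$-ring structure on $S$'' without spelling out the composition axiom---whereas your Step~2 supplies that verification via the multivariate chain rule; your justification there (that the needed identities are identities of smooth functions on $\mathbb{R}^k$ and hence transfer to the $C^{\infty}$-ring $R$) is the right idea, though you could phrase it more crisply as ``the Fa\`a di Bruno relations are equalities in $C^{\infty}(\mathbb{R}^k)$, so they hold after applying any $C^{\infty}$-ring's operations.''
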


\begin{proof}
 For $h\in C^{\infty}({\Bbb R}^k)$, $k\in {\Bbb Z}_{\ge 1}$,
   and $s_1,\, \cdots\,,\, s_k\in S$,
  define  $h(s_1,\,\cdots\,,\,s_k)$ by setting
  $$
    h(s_1,\,\cdots\,,\,s_k)\; :=\;
	  \sum_{d=0}^{kl}\, \frac{1}{d!}\,
	    \sum_{d_1+\,\cdots\,+d_k=d}
	     (\partial_1^{\,d_1}\,\cdots\,\partial_k^{\,d_k}  h)(r_1,\,\cdots\,,\, r_k)
		  \cdot n_1^{d_1}\,\cdots\,n_k^{d_k}\,,	
  $$
  where
    \begin{itemize}
	  \item[\LARGE $\cdot$]
       $s_i=r_i+n_i$, $i=1,\,\ldots\,,k$,
        is the decomposition of $s_i\in S$ according to $S=R \oplus N$;
		
	  \item[\LARGE $\cdot$]	
	  	$\partial_1^{\,d_1}\,\cdots\,\partial_n^{\,d_n}  h \in C^{\infty}({\Bbb R}^k)$
       is the partial derivative of $h$ with respect to the first variable $d_1$-times, the second variable $d_2$-times,
	    ..., and the $k$-th variable $d_k$-times;
		
	  \item[\LARGE $\cdot$]	
	   $(\partial_1^{\,d_1}\,\cdots\,\partial_k^{\,d_k}  h)(r_1,\,\cdots\,,\, r_k)\in R$
	    from the $C^{\infty}$-ring structure on $R$.
	\end{itemize}	
 This defines a $C^{\infty}$-ring structure on $S$ via the $C^{\infty}$-ring structure on $R$.
 By construction,
   both the built-in ring-homomorphisms $R\hookrightarrow S$ and $S\rightaarrow R$
   are $C^{\infty}$-ring-homomorphisms with respect to this $C^{\infty}$-ring structure on $S$.
 
 Uniqueness of this $C^{\infty}$-ring structure with the required property follows from Lemma~1.2.1.
                                                                  % Lemma [$C^{\infty}$ evaluation after nilpotent perturbation]
   
 This completes the proof.

\end{proof}

\bigskip

\begin{lemma} {\bf [extension of $C^{\infty}$-ring-homomorphism]}$\;$
 Let $S_1=R_1\oplus N_1$ and $S_2= R_2\oplus N_2$ be commutative ${\Bbb R}$-algebras
   such that $R_1$ and $R_2$  are $C^{\infty}$-rings and $N_1$ and $N_2$ are nilpotent,
     with $N_1^{l_1+1}=0$ and $N_2^{l_2+1}=0$.
 It follows from Lemma~1.2.2
                      % Lemma [extension of $C^{\infty}$-ring structure]
  that $S_1$ and $S_2$ are endowed canonically with a $C^{\infty}$-ring structure
       that extends $R_1$ and $R_2$ respectively.
 Let
   $f:R_1\rightarrow R_2$ be a $C^{\infty}$-ring-homomorphism.
 Then any ring-homomorphism $g:S_1\rightarrow S_2$ that extends $f$
   is a $C^{\infty}$-ring-homomorphism.
\end{lemma}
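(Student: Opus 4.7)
\medskip

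\noindent
\textbf{Proof proposal.}
The plan is to verify directly that $g$ respects the $C^{\infty}$-ring operation on both sides, by expanding $h(s_1,\ldots,s_k)$ and $h(g(s_1),\ldots,g(s_k))$ via Lemma~1.2.1 and then using only that $g$ is a ring-homomorphism, that $g|_{R_1}=f$, and that $f$ is a $C^{\infty}$-ring-homomorphism. Since the Taylor-type expansion in Lemma~1.2.1 reduces $C^{\infty}$-evaluation on nilpotent perturbations to a finite polynomial in the $n_i$ with coefficients that are \emph{ordinary} $C^{\infty}$-evaluations on the $r_i\in R_1$, all genuinely transcendental content lies inside $R_1$, where $f$ already does the job.

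Concretely, I would take $h\in C^{\infty}({\Bbb R}^k)$ and $s_i\in S_1$, write $s_i=r_i+n_i$ with $r_i\in R_1$, $n_i\in N_1$, and apply Lemma~1.2.1 inside the $C^{\infty}$-ring $S_1$ to obtain
\[
h(s_1,\ldots,s_k)\;=\;\sum_{d=0}^{kl_1}\frac{1}{d!}\sum_{d_1+\cdots+d_k=d}(\partial_1^{d_1}\cdots\partial_k^{d_k}h)(r_1,\ldots,r_k)\cdot n_1^{d_1}\cdots n_k^{d_k},
\]
where each factor $(\partial_1^{d_1}\cdots\partial_k^{d_k}h)(r_1,\ldots,r_k)$ lies in $R_1$ (by Lemma~1.2.2, the built-in $R_1\hookrightarrow S_1$ is a $C^{\infty}$-ring-homomorphism, so $C^{\infty}$-evaluation on tuples from $R_1$ produces elements of $R_1$). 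Applying $g$, which is an ${\Bbb R}$-algebra-homomorphism, commutes with the finite sum and the monomials $n_1^{d_1}\cdots n_k^{d_k}$, and on the $R_1$-coefficients we may replace $g$ by $f$; using that $f$ is a $C^{\infty}$-ring-homomorphism, the coefficient becomes $(\partial_1^{d_1}\cdots\partial_k^{d_k}h)(f(r_1),\ldots,f(r_k))$ in $R_2$.

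Next I would apply Lemma~1.2.1 a second time, now inside the $C^{\infty}$-ring $S_2$, to the tuple $g(s_i)=f(r_i)+g(n_i)$. This requires checking the hypotheses: $f(r_i)\in R_2\subset S_2$, and $g(n_i)\in S_2$ is nilpotent because $n_i^{l_1+1}=0$ and $g$ is a ring-homomorphism (note that we do \emph{not} need $g(N_1)\subseteq N_2$; Lemma~1.2.1 only demands nilpotence of the perturbation in the ambient ring). Since $R_2\hookrightarrow S_2$ is a $C^{\infty}$-ring-homomorphism (Lemma~1.2.2), $(\partial_1^{d_1}\cdots\partial_k^{d_k}h)(f(r_1),\ldots,f(r_k))$ computed in $S_2$ agrees with the same expression computed in $R_2$. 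Term-by-term comparison then yields $h(g(s_1),\ldots,g(s_k))=g(h(s_1,\ldots,s_k))$, which is exactly the $C^{\infty}$-ring-homomorphism property for $g$.

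The only subtle point—and the one I would flag as the main thing to verify carefully—is the compatibility of the two $C^{\infty}$-ring structures in the second application of Lemma~1.2.1, namely that evaluation of $\partial^{\mathbf d}h$ on $R_2$-tuples inside $S_2$ gives the same element of $R_2\subset S_2$ as evaluation inside $R_2$. This is immediate from Lemma~1.2.2 but deserves an explicit sentence. Everything else is bookkeeping, making the proof a short few lines once the two invocations of Lemma~1.2.1 are in place.
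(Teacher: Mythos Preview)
Your proposal is correct and follows essentially the same approach as the paper: expand $h(s_1,\ldots,s_k)$ via Lemma~1.2.1, push $g$ through the finite Taylor sum using that $g$ is a ring-homomorphism with $g|_{R_1}=f$ a $C^{\infty}$-ring-homomorphism, and then recognize the result as $h(g(s_1),\ldots,g(s_k))$ by a second application of Lemma~1.2.1 to the decomposition $g(s_i)=f(r_i)+g(n_i)$ with $g(n_i)$ nilpotent. Your explicit remark that one does not need $g(N_1)\subseteq N_2$, only nilpotence of $g(n_i)$ in $S_2$, is a nice clarification that the paper leaves implicit.
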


\begin{proof}
 For all
    $h\in C^{\infty}({\Bbb R}^k)$, $k\in {\Bbb  Z}_{\ge 1}$, and $a_1, \,\cdots\,,\, a_k\in S_1$,
  we need to show that
  $$
    g(h(a_1,\,\cdots\,,\, a_k))\;
	  =\;   h(g(a_1),\,\cdots\,,\, g(a_k))\,.
  $$

 To prove this,
   let $a_i=b_i+c_i$, $i=1,\,\cdots\,,\, k$
      be the decomposition of $a_i$ according to the decomposition $S_1=R_1\oplus N_1$.
 Then observe that
   $g(a_i)=g(b_i)+g(c_i)=f(b_i)+g(c_i)$
      with $g(c_i)$ nilpotent: $(g(c_i))^{l_1+1}=0$ ,
    for $i=1,\,\cdots\,,\, k$,
   since $g$ is a ring-homomorphism extension of $f$.
 It follows from Lemma~1.2.1 that
                      % Lemma [$C^{\infty}$ evaluation after nilpotent perturbation]
  %
  \begin{eqnarray*}
    \lefteqn{
    g(h(a_1,\,\cdots\,,\, a_k))\;
	  =\;  g(h(b_1+c_1,\,\cdots\,,\, b_k+c_k)) }\\
     &&
	  =\;  g\left(\rule{0ex}{1.2em}\right.
	             \sum_{d=0}^{kl}\, \frac{1}{d!}\,
	               \sum_{d_1+\,\cdots\,+d_k=d}
	                (\partial_1^{\,d_1}\,\cdots\,\partial_k^{\,d_k}  h)(b_1,\,\cdots\,,\, b_k)
		               \cdot c_1^{d_1}\,\cdots\,c_k^{d_k}
	            \left.\rule{0ex}{1.2em}\right)   \\
     &&
	  =\;   \sum_{d=0}^{kl}\, \frac{1}{d!}\,
	               \sum_{d_1+\,\cdots\,+d_k=d}
	                f \left(
					   (\partial_1^{\,d_1}\,\cdots\,\partial_k^{\,d_k}  h)(b_1,\,\cdots\,,\, b_k)
					  \right)
		               \cdot   g(c_1)^{d_1}\,\cdots\, g(c_k)^{d_k}    \\
     &&
	  =\;   \sum_{d=0}^{kl}\, \frac{1}{d!}\,
	               \sum_{d_1+\,\cdots\,+d_k=d}
					   (\partial_1^{\,d_1}\,\cdots\,\partial_k^{\,d_k}  h)
					        (f(b_1),\,\cdots\,,\, f(b_k))
		               \cdot   g(c_1)^{d_1}\,\cdots\, g(c_k)^{d_k}    \\					
     &&
	   =\;   h(f(b_1)+g(c_1) ,\,\cdots\,,\, f(b_k)+ g(c_k))\;
	   =\;   h(g(a_1),\,\cdots\,,\, g(a_k))\,.
  \end{eqnarray*}

 This completes the proof.

\end{proof}

\bigskip

Passing from local to global via gluing gives the following corollaries for $C^{\infty}$-schemes:

\bigskip

\begin{corollary} {\bf [extension of $C^{\infty}$-scheme]}$\;$
 Let
   $X$ be a $C^{\infty}$-scheme and
   $X \subset \check{X}$ be an inclusion of locally ringed spaces,
      given by a nilpotent ideal sheaf ${\cal N}\subset {\cal O}_{\check{X}}$.
 Suppose that 	
	the short exact sequence
	   $$
	      0\;\longrightarrow\; {\cal N}\; \longrightarrow\; {\cal O}_{\check{X}}\;
		       \longrightarrow\; {\cal O}_X\; \longrightarrow\; 0
	   $$
	 has a built-in splitting ${\cal O}_X\subset {\cal O}_{\check{X}}$
	   as sheaves of ${\Bbb R}$-algebras on the same topological space underlying both $X$ and $\check{X}$.
 Then, $\check{X}$ admits canonically a $C^{\infty}$-scheme structure
   such that both the built-in inclusion $X\hookrightarrow \check{X}$ and
     the built-in dominant morphism $\check{X}\rightarrow X$ are morphisms of $C^{\infty}$-schemes.
\end{corollary}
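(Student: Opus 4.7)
The plan is to build the $C^{\infty}$-scheme structure on $\check{X}$ affine-locally on $X$ by invoking Lemma~1.2.2, and then glue via Lemma~1.2.3. Since $X$ is a $C^{\infty}$-scheme and $\mathcal{N}\subset \mathcal{O}_{\check{X}}$ is locally nilpotent, I would first cover the common underlying topological space by open subsets $\{U_{\alpha}\}$ on which $\mathcal{O}_X(U_{\alpha})$ is a $C^{\infty}$-ring arising from the affine structure of $X$, shrinking the $U_{\alpha}$ if necessary so that $\mathcal{N}(U_{\alpha})^{l_{\alpha}+1}=0$ for some $l_{\alpha}\in {\mathbb Z}_{\ge 1}$. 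The hypothesized splitting then gives on each $U_{\alpha}$ a decomposition $\mathcal{O}_{\check{X}}(U_{\alpha}) = \mathcal{O}_X(U_{\alpha})\oplus \mathcal{N}(U_{\alpha})$ as $\mathbb{R}$-algebras, so Lemma~1.2.2 endows $\mathcal{O}_{\check{X}}(U_{\alpha})$ with a unique $C^{\infty}$-ring structure extending that on $\mathcal{O}_X(U_{\alpha})$ and for which both the inclusion $\mathcal{O}_X(U_{\alpha})\hookrightarrow \mathcal{O}_{\check{X}}(U_{\alpha})$ and the projection $\mathcal{O}_{\check{X}}(U_{\alpha})\twoheadrightarrow \mathcal{O}_X(U_{\alpha})$ are $C^{\infty}$-ring-homomorphisms.

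Next I would verify that these local $C^{\infty}$-ring structures assemble into a sheaf of $C^{\infty}$-rings on the common underlying space. On each overlap $U_{\alpha}\cap U_{\beta}$, the restriction maps from $U_{\alpha}$ and from $U_{\beta}$ are a priori only ring-homomorphisms of sheaves; but by hypothesis they restrict to $C^{\infty}$-ring-homomorphisms on $\mathcal{O}_X$ (as $X$ is already a $C^{\infty}$-scheme), and Lemma~1.2.3 forces them to be $C^{\infty}$-ring-homomorphisms with respect to the extended structures on $\mathcal{O}_{\check{X}}$. This upgrades $\mathcal{O}_{\check{X}}$ to a sheaf of $C^{\infty}$-rings, making $\check{X}$ a $C^{\infty}$-scheme. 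Uniqueness of the local extensions in Lemma~1.2.2 shows the resulting global structure is independent of the chosen cover, hence canonical; and by the construction, both $X\hookrightarrow \check{X}$ and $\check{X}\twoheadrightarrow X$ are morphisms of $C^{\infty}$-schemes.

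The only point that is not a direct unwinding of definitions is the compatibility on overlaps, and this is exactly what Lemma~1.2.3 is designed for: the splitting is assumed only as an $\mathbb{R}$-algebra splitting, so one cannot simply declare the restriction maps to be $C^{\infty}$-compatible by fiat. The rigidity statement that \emph{any} ring-homomorphism extending a $C^{\infty}$-ring-homomorphism between such nilpotent thickenings of $C^{\infty}$-rings is automatically a $C^{\infty}$-ring-homomorphism is what enables the global gluing and hence the canonicity of the extended structure.
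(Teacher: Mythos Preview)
Your proposal is correct and follows essentially the same approach as the paper, which states only ``Passing from local to global via gluing gives the following corollaries for $C^{\infty}$-schemes'' immediately after Lemmas~1.2.2 and~1.2.3. You have simply made explicit the local application of Lemma~1.2.2 and the use of Lemma~1.2.3 on overlaps that this one-line remark encapsulates.
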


\bigskip

\begin{corollary} {\bf [extension of morphism between $C^{\infty}$-schemes]}$\;$
 Let
   $X$ and $Y$ be $C^{\infty}$-schemes,
   $X\subset \check{X}$ and $Y\subset\check{Y}$ be as in Corollary~1.2.4
                                                                                                     % Corollary [extension of $C^{\infty}$-scheme]
      from some built-in nilpotent split-exact extension of structure sheaves,
   and $f:X\rightarrow Y$ be a morphism of $C^{\infty}$-schemes.
 Then any morphism $\check{f}:\check{X}\rightarrow \check{Y}$ of locally ringed spaces
  that extends $f$ is a morphism of $C^{\infty}$-schemes.
\end{corollary}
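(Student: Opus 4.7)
The assertion that $\check{f}$ is a morphism of $C^{\infty}$-schemes is local on both $\check{X}$ and $\check{Y}$, so the plan is to reduce the statement to Lemma~1.2.3 patch by patch, and then to observe that no non-trivial gluing data needs to be checked. The key point is that the extra $C^{\infty}$-structure on $\check{X}$ and $\check{Y}$ produced by Corollary~1.2.4 is forced by that on $X$ and $Y$ through the built-in splittings, so there is essentially no freedom in how $\check{f}^{\sharp}$ can behave.

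First, I would cover $Y$ by affine $C^{\infty}$-open subsets $U \subset Y$ with $\Gamma(U,{\cal O}_Y)= R_2$ a $C^{\infty}$-ring, and use continuity of $\check{f}$ and the fact that $\check{X}$ and $X$ share the same underlying topological space to pull back to an affine $C^{\infty}$-open $V\subset X$ with $\Gamma(V,{\cal O}_X)= R_1$, chosen small enough so that $\check{f}(V)\subset U$. The built-in split nilpotent extensions give
$$
 \Gamma(V,{\cal O}_{\check{X}})\;=\; R_1 \oplus N_1\,, \qquad
 \Gamma(U,{\cal O}_{\check{Y}})\;=\; R_2 \oplus N_2\,,
$$
with $N_1$, $N_2$ nilpotent, and by Corollary~1.2.4 these are the very $C^{\infty}$-ring structures on the sections of ${\cal O}_{\check{X}}$ and ${\cal O}_{\check{Y}}$. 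The morphism $\check{f}$ of locally ringed spaces provides a ring-homomorphism
$$
 \check{f}^{\sharp}\;:\; R_2 \oplus N_2\;\longrightarrow\; R_1 \oplus N_1
$$
which, because it extends $f^{\sharp}:R_2\to R_1$ and $f$ is a morphism of $C^{\infty}$-schemes, fits exactly the hypotheses of Lemma~1.2.3. That lemma then gives at once that $\check{f}^{\sharp}$ on these sections is a $C^{\infty}$-ring-homomorphism.

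Second, I would globalize. Since being a $C^{\infty}$-ring-homomorphism is a condition about the behaviour of sections under the $C^{\infty}$-ring operations, and since the $C^{\infty}$-ring structures on the $\check{X}$- and $\check{Y}$-side are the unique ones compatible with restrictions (again by Corollary~1.2.4 and the uniqueness inside Lemma~1.2.2), the property established on the cover assembles to the statement that $\check{f}^{\sharp}:{\cal O}_{\check{Y}}\to \check{f}_{\ast}{\cal O}_{\check{X}}$ is a morphism of sheaves of $C^{\infty}$-rings. Combined with the fact that $\check{f}$ is already a morphism of locally ringed spaces, this is the definition of a morphism of $C^{\infty}$-schemes.

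The only step requiring any care is the compatibility of the local splittings with restriction, i.e.\ the assertion that the $C^{\infty}$-ring structures coming from Corollary~1.2.4 on different affine patches agree on overlaps. This is automatic from the uniqueness clause of Lemma~1.2.2, since on an overlap both induced structures extend the same $C^{\infty}$-ring structure on ${\cal O}_X$ (respectively ${\cal O}_Y$) and make the built-in inclusion and projection $C^{\infty}$-ring-homomorphisms. Thus the only potentially non-routine point dissolves, and the corollary follows entirely from Lemma~1.2.3 applied locally.
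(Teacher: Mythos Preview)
Your argument is correct and is exactly the approach the paper has in mind: the paper does not spell out a proof of Corollary~1.2.5 at all, but merely says ``Passing from local to global via gluing gives the following corollaries for $C^{\infty}$-schemes'' before stating Corollaries~1.2.4 and~1.2.5. You have written out precisely that passage --- localize to affine patches, apply Lemma~1.2.3, and use the uniqueness clause of Lemma~1.2.2 to ensure the local $C^{\infty}$-structures glue --- so there is nothing to compare.
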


\bigskip
 
\subsection{Basics of super algebraic geometry}

The most basic notions in super algebraic geometry for the current notes and their sequel are collected here.
(Some of the settings are more general than [L-Y3: Sec.\ 2.1] (D(11.2).).
Readers are referred to the thesis `{\sl Superrings and supergroups}' [Westra] of Dennis Westra
 for further details and the foundation toward super-algebraic geometry in line with Grothendieck's Algebraic Geometry.
 
\bigskip
 
\begin{definition}{\bf [superring].} {\rm
 A {\it superring} $A$  is a ${\Bbb Z}/2$-graded
    ${\Bbb Z}/2$-commutative (unital associative) ring
  $A=A_0\oplus A_1$ (also denoted $A_{\even}\oplus A_{\odd}$)
  such that the multiplication $A\times A \rightarrow A$ satisfies
  $$
   \begin{array}{llll}
    \mbox{({\it ${\Bbb Z}/2$-graded})}            &&&
      A_0A_0\; \subset\;  A_0\,, \hspace{1em}
	  A_0A_1\; =\; A_1A_0\; \subset\;  A_1\,, \hspace{1em}\mbox{and}\hspace{1em}
	  A_1A_1\; \subset\;  A_0\,, \\[.6ex]
	\mbox{({\it ${\Bbb Z}/2$-commutative})}  &&&
	  aa^{\prime}\; =\; (-1)^{ii^{\prime}}a^{\prime}a
	  \hspace{2em}\mbox{for $a\in A_i$ and $a^{\prime}\in A_{i^{\prime}}$,
	                                                  $i, i^{\prime}=0,\,1\,$.}	
   \end{array}	
  $$
 A {\it morphism} between superrings (i.e.\ {\it superring-homomorphism})
  is a ${\Bbb Z}/2$-grading-preserving ring-homomorphism of the underlying unital associative rings.
 The elements of $A_0$ are called {\it even}, the elements of $A_1$ are called {\it odd},
 and an element that is either even or odd is said to be {\it homogeneous}.
 For a homogeneous element $a\in A$,
  denote by $|a|$ the {\it ${\Bbb Z}/2$-degree} or {\it parity} of $a$;
  $|a|=i$ if $a\in A_i$, for $i=0,\,1$.
 
 An {\it ideal} $I$ of $A$ is said to be {\it ${\Bbb Z}/2$-graded}
  if $I=(I\cap A_0)+(I\cap A_1)$.
 In this case, $A$ induces a superring structure on the quotient ring $A/I$,
   with the ${\Bbb Z}/2$-grading given by\\
   $A/I=(A_0/(I\cap A_0)) \oplus(A_1/(I\cap A_1))  $.
 The converse is also true; cf.\ Definition/Lemma~1.3.3.
                                                 % Definition/Lemma [${\Bbb Z}/2$-graded ideal = supernormal ideal]
}\end{definition}

\bigskip

\begin{example-definition}
{\bf [superpolynomial ring/Grassmann algebra/}\\ {\bf exterior algebra]}$\;$ {\rm
 Let $R$ be a commutative ring.
 Then the following super $R$-algebra
   $$
     R[\theta^1,\,\cdots\,,\,\theta^s]^{\anticommuting}\;
	 :=\;   \frac{R\langle \theta^1,\,\cdots\,,\,\theta^s \rangle}
	                   {( r\theta^{\mu}-\theta^{\mu}r\,,\;
			                    \theta^{\mu}\theta^{\nu}+\theta^{\nu}\theta^{\mu}\,
				                |\; r \in R\,;\;    \mu, \nu =1,\,\ldots\,,\, s)}
   $$
    is called a {\it superpolynomial ring over $R$} {\it with $s$ anticommuting variables}
	(synonymously, {\it Grassmann $R$-algebra with $s$ generators},
	                               {\it exterior $R$-algebra with $s$ generators}).
 Here,
  \begin{itemize}
    \item[\LARGE $\cdot$]
     $R\langle \theta^1,\,\cdots\,,\,\theta^s \rangle$	
      is the noncommutative $R$-algebra freely generated by $\theta^1,\,\cdots\,,\, \theta^s$,
	
    \item[\LARGE $\cdot$]
     $( r\theta^{\mu}-\theta^{\mu}r\,,\;
			                    \theta^{\mu}\theta^{\nu}+\theta^{\nu}\theta^{\mu}\,
				                |\; r \in R\,;\;    \mu, \nu =1,\,\ldots\,,\, s)$
      is the bi-ideal in $R\langle \theta^1,\,\cdots\,,\,\theta^s \rangle$
      generated by elements indicated.	
  \end{itemize}
 The ${\Bbb Z}/2$-grading of $R[\theta^1,\,\cdots\,,\,\theta^s]^{\anticommuting}$
   is given by assigning
     elements of $R$ even, $\theta^1, \,\cdots\,,\,\theta^s$ odd,
	 and the product rule or even or odd homogeneous elements.
	
 The even component $R[\theta^1,\,\cdots\,,,\, \theta^s]^{\anticommuting}_{\even}$
         of $R[\theta^1,\,\cdots\,,,\, \theta^s]^{\anticommuting}$
     consists of polynomials in $\theta^1,\,\cdots\,,\, \theta^s$ with coefficients in $R$
	   whose monomial summands are all of even total $(\theta^1,\,\cdots\,,\, \theta^s)$-degree.
 The odd component $R[\theta^1,\,\cdots\,,,\, \theta^s]^{\anticommuting}_{\odd}$
         of $R[\theta^1,\,\cdots\,,,\, \theta^s]^{\anticommuting}$
     consists of polynomials in $\theta^1,\,\cdots\,,\, \theta^s$ with coefficients in $R$
	   whose monomial summands are all of odd total $(\theta^1,\,\cdots\,,\, \theta^s)$-degree.	
}\end{example-definition}

\bigskip

By construction, there are built-in ring-inclusion $\iota_{A_0}$ and  ring-epimorphism $\pi_{A_0}$
 $$
  \xymatrix{
    A_0\; \ar@{^{(}->}[r]^-{\iota_{A_0}}
	 & \;A\; \ar@{->>}[r]^-{\pi_{A_0}}    & \;A_0
   }
 $$
 with $\pi_{A_0}\circ \iota_{A_0}=\Id_{A_0}$ the identity map on $A_0$.

\bigskip

\begin{definition-lemma}{\bf [${\Bbb Z}/2$-graded ideal = supernormal ideal].}
{\rm
   An ideal $I$ of a superring $A$ is called {\it supernormal}
    if $A$ induces a superring structure on the quotient ring $A/I$.}
 In terms of this, $I$ is ${\Bbb Z}/2$-graded if and only if $I$ is supernormal.	
\end{definition-lemma}
 
\bigskip

\begin{definition}{\bf [module over superring].} {\rm
 Let $A$ be a superring.
 An {\it $A$-module} $M$ is a {\it left module} over the unital associative ring underlying $A$
   that is endowed with a {\it ${\Bbb Z}/2$-grading} $M=M_0\oplus M_1$
    such that
     $$
	   A_0M_0\; \subset\; M_0\,, \hspace{1em}
	   A_1M_0\; \subset\; M_1\,, \hspace{1em}
	   A_0M_1\; \subset\; M_1\,, \hspace{1em}\mbox{and}\hspace{1em}
	   A_1M_1\;\subset\; M_0\,.
	 $$
 The elements of $M_0$ are called {\it even}, the elements of $M_1$ are called {\it odd},
   and an element that is either even or odd is said to be {\it homogeneous}.
 For a homogeneous element $m\in M$,
  denote by $|m|$ the {\it ${\Bbb Z}/2$-degree} or {\it parity} of $m$;
  $|m|=i$ if $m\in M_i$, for $i=0,\,1$.	
		
 For a superring $A$,
  \begin{itemize}
   \item[{\Large $\cdot$}]
 {\it a left $A$-module is canonically a right $A$-module}
    by setting
      $ma := (-1)^{|m||a|}am  $ for homogeneous elements $a\in A$ and $m\in M$
       and then extending ${\Bbb Z}$-linearly to all elements.
  \end{itemize}	
 For that reason, as in the case of commutative rings and modules,
  we don't  distinguish a left-, right-, or bi-module for a module over a superring.
  
 A {\it morphism} (or {\it module-homomorphism})
     $h:M\rightarrow M^{\prime}$ between $A$-modules
   is a right-module-homomorphism between the right-module over the unital associative ring underlying $A$;
 or equivalently
   a left-module-homomorphism between the left-module over the unital associative ring underlying $A$
   but with the sign rule applied to homogeneous components of $h$ and homogeneous elements of $A$.
 Explicitly,
 $h$ is said to be
  {\it even} if it preserves the ${\Bbb Z}/2$-grading   or
  {\it odd}  if it switches the ${\Bbb Z}/2$-grading;
 decompose $h$ to $h=h_0+h_1$  a summation of even and odd components,
 then $h_i(am)=(-1)^{i|a|}a h_i(m)$, $i=0,\,1$, for $a\in A$ homogeneous and $m\in M$.
 
 Subject to the above sign rules when applicable, the notion of
 
      \medskip
	
	{\Large $\cdot$}
       {\it submodule} $M^{\prime} \hookrightarrow M$, (cf.\ {\it monomorphism}),
	
	{\Large $\cdot$}
       {\it quotient module}  $M \twoheadrightarrow M^{\prime}$, (cf.\ {\it epimorphism}), 	
	
	{\Large $\cdot$}
	   {\it direct sum}  $M\oplus M^{\prime}$ of $A$-modules,
	
    {\Large $\cdot$}
      {\it tensor product} $M\otimes_A M^{\prime}$ of $A$-modules,
	
    {\Large $\cdot$}
	    {\it finitely generated}:
    		if $A^{\oplus l} \twoheadrightarrow M$
		     exists for some $l$,
			
    {\Large $\cdot$}
        {\it finitely presented}:
           if $A^{\oplus l^{\prime}}
		          \rightarrow   A^{\oplus l} \rightarrow M \rightarrow 0$
              is exact for some $l$, $l^{\prime}$				
	
       \medskip
	
	\noindent
    are all defined in the ordinary way as in commutative algebra.
}\end{definition}

\bigskip

We introduce now a new notion that is unusual from the aspect of the category  of superrings.
However, as explained in Sec.$\:$1.1, such a notion is required when one wants to bring in the physics of supersymmetry,
 which exchanges the even part $A_0$ and the odd part $A_1$  of a superring $A=A_0\oplus A_1$
 (when $A$ is the function-ring of the super space-time in question):
 
\bigskip

\begin{definition} {\bf [general superring-homomorphism]}$\;$ {\rm
 A {\it general superring-homomorphism} $h:A\rightarrow B$
  is a homomorphism between the unital associative rings that underlie $A$ and $B$ respectively.
}\end{definition}

\bigskip

\noindent
A general superring-homomorphism may mingle, rather than preserve,
 the even part and the odd part of the superrings in question.

\bigskip

\begin{definition} {\bf [superscheme]} {\rm
 A {\it superscheme} is a locally ringed space
   $\widehat{X}:=(X,  {\cal O}_{\widehat{X}})$
  with the structure sheaf
   $$
     {\cal O}_{\widehat{X}}\;    =\; {\cal A}_0\oplus {\cal A}_1\;
	  =:\;   {\cal A}_{\even}\oplus {\cal A}_{\odd}
   $$
	a sheaf of superrings over the underlying topological space $X$ such that
   \begin{itemize}
    \item[(1)]
     The locally ringed space $\widehat{X}_{\even}:=(X,{\cal A}_{\even})$
	  defines a scheme in the sense of Algebraic Geometry as in [E-H] and [Ha].
	
	\item[(2)]
	 The odd component ${\cal A}_{\odd}$ of ${\cal O}_{\widehat{X}}$
	  is a coherent sheaf of ${\cal A}_{\even}$-modules on $\widehat{X}_{\even}$.
   \end{itemize}
  For convenience, we shall denote ${\cal O}_{\widehat{X}}$ also by $\widehat{\cal O}_X$.
  When in need, a usual scheme is regarded as a superscheme whose structure sheaf has zero odd-component.

 A {\it morphism} of superschemes from $(X, \widehat{\cal O}_X)$ to $(Y,\widehat{\cal O}_Y)$
     is a pair $(f, f^{\sharp})$ of
   a continuous map $f:X\rightarrow Y $ and
   a map $f^{\sharp}: \widehat{\cal O}_Y \rightarrow f_{\ast}\widehat{\cal O}_X$
     of sheaves of superrings that preserves the ${\Bbb Z}/2$-grading
	 and induces local homomorphisms
	  $f^{\sharp}_p: \widehat{\cal O}_{Y, f(p)}\rightarrow \widehat{\cal O}_{X,p}$
	  of local rings for all $p\in X$.

 A {\it general morphism} of superschemes
        from $(X, \widehat{\cal O}_X)$ to $(Y,\widehat{\cal O}_Y)$
     is a pair $(f, f^{\sharp})$ as above except that
	 $f^{\sharp}: \widehat{\cal O}_Y \rightarrow f_{\ast}\widehat{\cal O}_X$
       is only a map of sheaves of rings that underlie the superrings.
  $f^{\sharp}$ may not preserve the ${\Bbb Z}/2$-grading.   	
}\end{definition}
 
\bigskip

By construction, the short exact sequence of $\widehat{\cal O}_X$-modules
 $$
   0\; \longrightarrow\; {\cal A}_{\odd}\; \longrightarrow\; \widehat{\cal O}_X\;
         \longrightarrow\; {\cal A _{\even}}\; \longrightarrow\; 0
 $$
 is canonically split by the built-in inclusion ${\cal A}_{\even}\subset \widehat{\cal O}_X$.
This defines built-in morphisms
 $$
  \xymatrix{
    \widehat{X}_{\even}\; \ar@{^{(}->}[r]^-{\iota_{\widehat{X}_{\tinyeven}}}
	 & \;\widehat{X}\; \ar@{->>}[r]^-{\pi_{\widehat{X}_{\tinyeven}}}
	 & \;\widehat{X}_{\even}
   }
 $$
 of superschemes
 with $\pi_{\widehat{X}_{\tinyeven}}\circ \iota_{\widehat{X}_{\tinyeven}}
           =\Id_{\widehat{X}_{\tinyeven}}$ the identity map on $\widehat{X}_{\even}$.

\bigskip

\subsection{Super $C^{\infty}$-rings, super $C^{\infty}$-schemes, and general morphisms}

Closely related to and enforced by the notion of general superring-homomorphisms,
a few basic definitions concerning super $C^{\infty}$-rings and  super $C^{\infty}$-schemes are reset here.
They are more general than those introduced in [L-Y3] (D(11.2))
   and are required on the string-theory side for the problem of constructing
   fundamental fermionic stacked D-branes along the line of the Ramond-Neveu-Schwarz formulation of fermionic strings
	 --- a theme to be addressed in another subseries of the D-project.

\bigskip	

\begin{definition} {\bf [super $C^{\infty}$-ring]}$\;$ {\rm
 A {\it super $C^{\infty}$-ring} is a superring $A=A_0\oplus A_1 =: A_{\even}\oplus A_{\odd}$
    in the sense of Definition~1.3.1 such that $A_{\even}$ is equipped with a  $C^{\infty}$-ring structure
	(cf.$\:$[Jo]).             % Definition [superring]
 An {\it ideal} of a super $C^{\infty}$-ring $A$ is an ideal of the underlyung ring.
 A (left, right, or bi-){\it module} of $A$ is a (resp.\ left-, right-, bi-)module of the underlying superring.
}\end{definition}	

\bigskip

\begin{definition} {\bf [super-$C^{\infty}$-ring-homomorphism]}$\;$ {\rm
 Let  $A=A_0\oplus A_1$, $B=B_0\oplus B_1$ be super $C^{\infty}$-rings.
 A {\it super-$C^{\infty}$-ring-homomorphism} $h:A\rightarrow B$
  is a ring-homomorphism of the underlying unital associative rings such that
   \begin{itemize}
    \item[(1)]
    $h$ preserves the ${\Bbb Z}/2$-grading:
       $h(A_0)\subset B_0$ and $h(A_1)\subset B_1$;
	
	\item[(2)]
     the restriction
      $h|_{A_0}:A_0\rightarrow B_0$	
	  is a $C^{\infty}$-ring-homomorphism.
   \end{itemize}	
}\end{definition}	

\bigskip

\begin{definition} {\bf [general super-$C^{\infty}$-ring-homomorphism]}$\;$ {\rm
 Let  $A=A_0\oplus A_1$,\\ $B=B_0\oplus B_1$ be super $C^{\infty}$-rings.
 A {\it general super-$C^{\infty}$-ring-homomorphism} $h:A\rightarrow B$
  is a ring-homomorphism of the underlying unital associative rings such that
  the induced ring-homomorphism
   $$
      \pi_{B_0}\circ h \circ \iota_{A_0}\;:\;  A_0\; \longrightarrow\; B_0
   $$
 is a $C^{\infty}$-ring-homomorphism.
}\end{definition}

\bigskip

\noindent
By definition, super-$C^{\infty}$-ring-homomorphisms are
 special examples of general super-$C^{\infty}$-ring-homomorphisms.

\bigskip
 
The above setting passes contravariantly to affine super $C^{\infty}$-schemes,
 and then to super $C^{\infty}$-schemes via gluing.
	
\bigskip

\begin{definition} {\bf [super $C^{\infty}$-scheme]}$\;$ {\rm
 A {\it super $C^{\infty}$-scheme} is a locally ringed space\\
   $\widehat{X}:=(X,  {\cal O}_{\widehat{X}})$
  with the structure sheaf
   $$
     {\cal O}_{\widehat{X}}\;    =\; {\cal A}_0\oplus {\cal A}_1\;
	  =:\;   {\cal A}_{\even}\oplus {\cal A}_{\odd}
   $$
	a sheaf of superrings over a topological space $X$ such that
   \begin{itemize}
    \item[(1)]
     The locally ringed space $\widehat{X}_{\even}:=(X,{\cal A}_{\even})$
	  defines a $C^{\infty}$ scheme in the sense of $C^{\infty}$-Algebraic Geometry as in [Jo].
	
	\item[(2)]
	 The odd component ${\cal A}_{\odd}$ of ${\cal O}_{\widehat{X}}$
	  is a finitely presented ${\cal A}_{\even}$-module on $\widehat{X}_{\even}$.
   \end{itemize}
  For convenience, we shall denote ${\cal O}_{\widehat{X}}$ also by $\widehat{\cal O}_X$.
  When in need, a usual $C^{\infty}$-scheme is regarded as a super $C^{\infty}$-scheme
  whose structure sheaf has zero odd-component.
}\end{definition}

\bigskip

\begin{example} {\bf [from spinor bundle to super $C^{\infty}$-scheme]}$\;$ {\rm
 Let
   $M$ be a Riemannian or Lorentzian $C^{\infty}$-manifold,
   ${\cal O}_M$ be its structure sheaf of smooth functions,
   $S$ be a spinor bundle or a direct sum of spinor bundles over $M$ of total rank $s$,
   $S^{\vee}$ be the dual bundle of $S$,
   ${\cal S}$ be the sheaf of $C^{\infty}$-sections of $S$, and
   ${\cal S}^{\vee}:= \Homsheaf_{{\cal O}_M}({\cal S},{\cal O}_M)$
      be the dual sheaf of ${\cal S}$ or equivalently the sheaf of $C^{\infty}$-sections of $S^{\vee}$.
 Then
  $$
    \mbox{$\bigwedge$}^{\bullet}_{{\cal O}_M}{\cal S}^{\vee}\;
	:=\;  \oplus_{l=0}^s\, \mbox{$\bigwedge$}^l_{{\cal O}_M}{\cal S}^{\vee}\,,
  $$
  where
    $\bigwedge^0_{{\cal O}_M}{\cal S}^{\vee}:={\cal O}_M$ by convention and
    $\bigwedge^l_{{\cal O}_M}$ is the exterior tensor product of degree $l$ of an ${\cal O}_M$-module,
  is a sheaf of ${\Bbb Z}/2$-graded ${\Bbb Z}/2$-commutative ${\cal  O}_M$-algebras, with
   $$
     \begin{array}{cl}
	 & (\mbox{$\bigwedge$}^{\bullet}_{{\cal O}_M}{\cal S}^{\vee})_{\even}\;
                :=\;\mbox{$\bigwedge$}^{\even}_{{\cal O}_M}{\cal S}^{\vee}\;		
	           :=\;  {\cal O}_M
			              \oplus \mbox{$\bigwedge$}^2_{{\cal O}_M}{\cal S}^{\vee} \oplus \cdots\,  
           \hspace{3em}			   \\[1.2ex]
	 \mbox{and}\hspace{2em}	
	 & (\mbox{$\bigwedge$}^{\bullet}_{{\cal O}_M}{\cal S}^{\vee})_{\odd}\;\;
                :=\;\mbox{$\bigwedge$}^{\odd}_{{\cal O}_M}{\cal S}^{\vee}\;\;		
	           :=\;  {\cal S}^{\vee}
			             \oplus \mbox{$\bigwedge$}^3_{{\cal O}_M}{\cal S}^{\vee} \oplus \cdots\,.  	
	 \end{array}
   $$
 $\mbox{$\bigwedge$}^{\bullet}_{{\cal O}_M}{\cal S}^{\vee}$
   is called the {\it sheaf of Grassmann algebras} (synonymously, {\it sheaf of exterior algebras})
   associated to $S$.
 It follows from Lemma~1.2.2 that
                     % Lemma [extension of $C^{\infty}$-ring structure]
  $(\mbox{$\bigwedge$}^{\bullet}_{{\cal O}_M}{\cal S}^{\vee})_{\even}$
   is canonically a sheaf of $C^{\infty}$-${\cal O}_M$-algebras,
    making
     $(M,(\mbox{$\bigwedge$}^{\bullet}_{{\cal O}_M}{\cal S}^{\vee})_{\even})$
	 a $C^{\infty}$-scheme.
 It follows that the locally ringed space
  $$
    \widehat{M}\; :=\;  (M, \mbox{$\bigwedge$}^{\bullet}_{{\cal O}_M}{\cal S}^{\vee})
  $$
  is a super $C^{\infty}$-scheme.
 See Remark~2.1.3 for further explanations.
      % Remark [$S$ and ${\ca l S}^{\vee}$ in defining $\widehat{\cal O}_X$]
  
 Explicitly, over an open set $U\subset M$ over which $S|_U$ is trivial.
 Let $\theta^1,\,\cdots\,,\,\theta^s$ be a basis of $S^{\vee}|_U$.
 Then
  $$
    (\mbox{$\bigwedge$}^{\bullet}_{{\cal O}_M}{\cal S}^{\vee})(U)\;
	 =\;  C^{\infty}(U)[\theta^1,\,\cdots\,,\,\theta^s]^{\anticommuting}\,,
  $$
  that is, an algebraic extension of $C^{\infty}(U)$ by variables $\theta^1,\,\cdots\,,\theta^s$,
   subject to the relations
  $$
     h \theta^{\mu}\;=\; \theta^{\mu} h\,,\hspace{2em}
	 \theta^{\mu}\theta^{\nu}\;=\; - \theta^{\nu}\theta^{\mu}\,,
  $$
  for all $h\in C^{\infty}(U)$ and $\mu,\nu =1,\,\ldots\,,\, s$.
 By construction, there are built-in morphisms of super $C^{\infty}$-schemes (cf.$\:$Definition~1.4.7 below)
                                                                                           % Definition [morphism between super $C^\infty$-schemes]
 $$
  \xymatrix @R=.6ex{
    \widehat{M}_{\even}\; \ar@{^{(}->}[r]^-{\iota_{\widehat{M}_{\tinyeven}}}
	  & \;\widehat{M}\; \ar@{->>}[r]^-{\pi_{\widehat{M}_{\tinyeven}}}
	  & \;\widehat{M}_{\even}\,, \\
    M\;\;\;\;\; \ar@{^{(}->}[r]^-{\iota_M}
	  & \;\widehat{M}\; \ar@{->>}[r]^-{\pi_M}     & \;M\;\;\;\;\;, \\	
   }
 $$
 with
    $\pi_{\widehat{M}_{\tinyeven}}\circ \iota_{\widehat{M}_{\tinyeven}}
           =\Id_{\widehat{M}_{\tinyeven}}$ the identity map on $\widehat{M}_{\even}$   and
	 $\pi_M \circ \iota_M =\Id_M$ the identity map on $M$.						
  
 Note that the same construction works with $S$ replaced by any real vector bundle over $M$.
 The super $C^{\infty}$-scheme thus obtained will also be called by a friendlier name:
 {\it super $C^{\infty}$-manifold}.
 When there is a need to make the rank$_{\Bbb R}$ $s$ of $S$ explicit,
   we will denote $\widehat{M}$ also by $\widehat{M}_{[s]}$.
 The super $C^{\infty}$-scheme $\widehat{{\Bbb R}^n}_{[s]}$,
    which has the underlying topology ${\Bbb R}^n$  and rank$_{\Bbb R}(S)=s$,
  is denoted also by ${\Bbb R}^{n|s}$ conventionally.
}\end{example}

\bigskip

\begin{example} {\bf [product of super $C^{\infty}$-manifolds]}$\;$ {\rm
 Let
   $\widehat{X}_{[s_1]}
      =(X, \widehat{\cal O}_X:=\bigwedge^{\bullet}_{{\cal O}_M}{\cal S}_1^{\vee})$
     and
   $\widehat{Y}_{[s_s]}
      =(Y, \widehat{\cal O}_Y:=\bigwedge^{\bullet}_{{\cal O}_M}{\cal S}_2^{\vee})$
  be supermanifolds as in Example~1.4.5
                                      % [from spinor bundle to super $C^{\infty}$-scheme]
   but with $S_1$ (resp.$\:S_2$) some arbitrary real vector bundle over $X$ (resp.\ $Y$)
    of rank$_{\Bbb R}$ $s_1$ (resp.$\:s_2$).
 Then,
   the {\it product} $\widehat{X}_{[s_1]}\times \widehat{Y}_{[s_2]}$
     of super $C^{\infty}$-manifolds $\widehat{X}_{[s_1]}$ and $\widehat{Y}_{[s_2]}$ is
	 a super $C^{\infty}$-manifold with the underlying topology $X\times Y$ and the structure sheaf
	 $$
	   \widehat{\cal O}_{X\times Y}\;
	    :=\;  \pr_X^{\ast}\widehat{\cal O}_X
	              \otimes_{\,{\cal O}_{X\times Y}} \pr_Y^{\ast}\widehat{\cal O}_Y\;
				  =:\;  \widehat{\cal O}_X
	                         \boxtimes_{\,{\cal O}_{X\times Y}} \widehat{\cal O}_Y\,,	
	 $$
    where
	  $\pr_X:X\times Y \rightarrow X$ and $\pr_Y:X\times Y\rightarrow  Y$ are projection maps   and
	  the ${\cal O}_{X\times Y}$-algebra structure on $\widehat{\cal O}_{X\times Y}$
        comes from the tensor product of
		  ${\Bbb Z}/2$-graded ${\Bbb R}/2$-commutative ${\cal O}_{X\times Y}$-algebras
	       $\pr_X^{\ast}\widehat{\cal O}_X$ and $\pr_Y^{\ast}\widehat{\cal O}_Y$.
 Explicitly,
   for open sets $U\subset X$ and $V\subset Y$ such that
   $$
     \widehat{\cal O}_X(U)\;=\; C^{\infty}(U)[\theta^1,\,\cdots\,,\theta^{s_1}]^{\anticommuting}
	   \hspace{2em}\mbox{and}\hspace{2em}	
	 \widehat{\cal O}_Y(V)\;
	   =\; C^{\infty}(V)[\vartheta^1,\,\cdots\,,\vartheta^{s_2}]^{\anticommuting}\,,
   $$
    for a choice of generating sections $\theta^1,\,\cdots\,,\theta^{s_1}$ of ${\cal S}^{\vee}|_U$
	  and $\vartheta^1,\,\cdots\,,\,\vartheta^{s_2}$ of ${\cal S}_2^{\vee}|_V$,
 one has
   \begin{eqnarray*}
    \widehat{\cal O}_{X\times Y}(U\times V)
	  & = &  C^\infty(U\times V)[\theta^1,\,\cdots\,,\, \theta^{s_1}\,;\,
	                 \vartheta^1,\,\cdots\,,\, \vartheta^{s_2}]^{\anticommuting}\\[.6ex]      					 
      & :=\: &   C^\infty(U\times V)[\theta^1,\,\cdots\,,\, \theta^{s_1}]^{\anticommuting}
	                    \otimes_{\,C^{\infty}(U\times V)}					
					   C^\infty(U\times V)[\vartheta^1,\,\cdots\,,\, \vartheta^{s_2}]^{\anticommuting}	  \\[.6ex]
      & = &  \frac{C^\infty(U\times V)
	                   \langle \theta^1,\,\cdots\,,\, \theta^{s_1}, \,\vartheta^1,\,\cdots\,,\, \vartheta^{s_2}\rangle}
	               {\left( \left.
				         \mbox{\small
						   $\begin{array}{l}
				             f \theta^\mu-\theta^{\mu}\!f\,,\;\;
                               \theta^\mu\theta^\nu+\theta^\nu\theta^\mu\,, \\[.6ex]		
                                 \hspace{2em}
								   f\vartheta^i-\vartheta^i f\,,\;\;
                                   \vartheta^i\vartheta^j+\vartheta^j\vartheta^i\,, \\[.6ex]
                                 \hspace{4em}							
							       \theta^\mu\vartheta^i-\vartheta^i\theta^\mu
						      \end{array}$}
						    \right|
							  \mbox{\small
							   $\begin{array}{l}
						           f\in C^\infty(U\times V),\\   \;\;\;  \mu,\nu =1,\,\ldots\,,\, s_1,    \\  \;\;\;\;\;\;  i,j=1,\,\cdots\,,\, s_2
							      \end{array}$}
					  \right)}\,.
   \end{eqnarray*}
  Here in the last line,
      the numerator is the free (associative, unital) $C^\infty(U\times V)$-algebra generated by
        $\theta^1,\,\cdots\,,\theta^{s_1},\, \vartheta^1,\,\cdots\,,\,\vartheta^{s_2}$ and
	  the denominator is the bi-ideal generated by the elements indicated.	
 In notation, 	
   $$
      \widehat{X}_{[s_1]}\times \widehat{Y}_{[s_2]}\;
	   =\;  \widehat{X\times Y}_{[s_1,s_2]}\;
	           :=\;  (X\times Y,  \widehat{\cal O}_{X\times Y})\,.
   $$
}\end{example}

\bigskip

\begin{definition} {\bf [morphism between super $C^{\infty}$-schemes]}$\;$ {\rm
 A {\it morphism} of super $C^{\infty}$-schemes
   from $(X, \widehat{\cal O}_X)$ to $(Y,\widehat{\cal O}_Y)$
     is a pair $(f, f^{\sharp})$ of
   a continuous map $f:X\rightarrow Y $ and
   a map $f^{\sharp}: \widehat{\cal O}_Y \rightarrow f_{\ast}\widehat{\cal O}_X$
     of sheaves of super $C^{\infty}$-rings that preserves the ${\Bbb Z}/2$-grading
	 and induces local homomorphisms
	  $f^{\sharp}_p: \widehat{\cal O}_{Y, f(p)}\rightarrow \widehat{\cal O}_{X,p}$
	  of local rings for all $p\in X$.
}\end{definition}

\bigskip

As in the case of superschemes, one has a built-in split exact sequence of $\widehat{\cal O}_X$-modules
$$
   0\; \longrightarrow\; {\cal A}_{\odd}\; \longrightarrow\; \widehat{\cal O}_X\;
         \longrightarrow\; {\cal A _{\even}}\; \longrightarrow\; 0
 $$
 and built-in morphisms
 $$
  \xymatrix{
    \widehat{X}_{\even}\; \ar@{^{(}->}[r]^-{\iota_{\widehat{X}_{\tinyeven}}}
	 & \;\widehat{X}\; \ar@{->>}[r]^-{\pi_{\widehat{X}_{\tinyeven}}}
	 & \;\widehat{X}_{\even}
   }
 $$
 of super $C^{\infty}$-schemes
 with $\pi_{\widehat{X}_{\tinyeven}}\circ \iota_{\widehat{X}_{\tinyeven}}
           =\Id_{\widehat{X}_{\tinyeven}}$ the identity map on $\widehat{X}_{\even}$. 		

\bigskip

\begin{definition} {\bf [general morphism between super $C^{\infty}$-schemes]}$\;$ {\rm
 A {\it general morphism} of super $C^{\infty}$-schemes
        from $(X, \widehat{\cal O}_X ={\cal A}_{\even}\oplus {\cal A}_{\odd})$
		to $(Y,\widehat{\cal O}_Y={\cal B}_{\even}\oplus {\cal B}_{\odd})$		
     is a pair $(f, f^{\sharp})$ as in Definition~1.4.7
	                                                             % Definition [morphism between super $C^{\infty}$-schemes]
  except that
	 $f^{\sharp}: \widehat{\cal O}_Y \rightarrow f_{\ast}\widehat{\cal O}_X$
       is only a map of sheaves of rings underlying the super  $C^{\infty}$-rings such that
	   the composition
	   $$
          \xymatrix{
            {\cal B}_{\even}\; \ar@{^{(}->}[r]^-{\iota_{{\cal B}_{\tinyeven}}}
	           & \;\widehat{\cal O}_Y\; \ar[r]^-{f^{\sharp}}
	           & \; f_{\ast}\widehat{\cal O}_X\; \ar[r]^-{ f_{\ast}\pi_{{\cal A}_{\tinyeven}}}
			   &\; f_{\ast}{\cal A}_{\even}
           }
       $$
	   is a map of sheaves of $C^{\infty}$-rings on $Y$. 	
  $f^{\sharp}$ may not preserve the ${\Bbb Z}/2$-grading.   	
}\end{definition}

\bigskip
 
\begin{remark}
$[$behind definition of morphisms and general morphisms for super ($C^{\infty}$-)schemes$\,]\;$ {\rm
 (1)
 A superscheme or super $C^{\infty}$-scheme can be defined as an equivalence class of gluing systems
        of rings as we did in [L-Y1] (D(1)) for Azumaya-type noncommutative spaces.
 Morphisms (resp.\ general morphisms) between superschemes or super $C^{\infty}$-schemes
  can thus be defined contravariantly as an equivalence class of gluing systems of superring-homomorphisms
   (resp.\ general superring-homomorphisms) as well.
 In ibidem, such a setting is a must in order to reveal the feature of D-branes under deformations
  (cf.$\:$ {\sc Figure}~2-3-2-1).
 For superschemes or super $C^{\infty}$-schemes, since the odd component of the structure is always nilpotent,
  the two settings are equivalent   and
 we present the one that is more standard-looking for ringed spaces.
   
 (2)
 Geometrically,
 for a map $f:\widehat{X}\rightarrow \widehat{Y}$ between super $C^{\infty}$-schemes
     that does not take $\widehat{X}_{\even}$ to $\widehat{Y}_{\even}$,
  we use its post-composition with $\widehat{Y}\rightarrow \widehat{Y}_{\even}$
    to retain a map
	   $\widehat{X}_{\even}\rightarrow \widehat{Y}_{\even}$ between $C^{\infty}$-schemes,
	 from which one determines whether $f$ should be considered as smooth.
 This is in the same spirit as for a map $g:X\rightarrow {\Bbb R}^n$,
   one uses the smoothness of all the compositions $\pr_j\circ g$,
     where $\pr_j:{\Bbb R}^n\rightarrow {\Bbb R}$ is the projection map to $j$-th component, $j=1,\,\cdots\,,\, n$,
   to define the smoothness of $g$.
 Passing to the graph of $g$ and expressed contravariantly in terms of function-rings,
  this gives us a hint as to how one should think of ``smoothness"
   for a map from an Azumaya super $C^{\infty}$-manifold to a super $C^{\infty}$-manifold
  (cf.\ Definition~2.1.4 and Definition~2.1.6).
        % Definition [$C^{\infty}$-admissible ring-homomorphism to
        %                   $C^{\infty}(\End_{\widehat{\Bbb C}_{[s_1]}}(\widehat{E}))$]
		% Definition [$C^{\infty}$-map from Azumaya/matrix supermanifold]
  
 (3)
 See Sec.$\:$3.2 for a further remark on the notion of
   `{\sl $C^{\infty}$-admissible noncommutative rings} and {\sl morphisms} between them.		
 }\end{remark}

\bigskip

\begin{remark} $[$affine $C^{\infty}$ case$\,]\;$ {\rm
 The language of schemes and sheaves of modules helps one to think geometrically.
 However, since $C^{\infty}$-manifolds are affine $C^{\infty}$-schemes,
   we are in the situation of a noncommutative-but-algebraic-in-nature extension of affine $C^{\infty}$-geometry.
 Fundamental details still rely on the study of rings, ring-homomorphisms, and modules involved.
}\end{remark}

\bigskip

\section{A further study of the notion of smooth maps from an Azumaya/matrix supermanifold}

With the stringy motivation and mathematical background in Sec.$\:$1,
 we now turn to  the main subject of the notes:
 $C^{\infty}$-maps from an Azumaya/matrix super $C^{\infty}$-manifold with a fundamental module
 to a (real) super $C^{\infty}$-manifold.
This section is devoted to the proof of two main theorems (Theorem~2.1.5 \& Theorem~2.1.8)
  stated in Sec.$\:$2.1.

\bigskip

\subsection{The setup and the statement of two main theorems}

Denote by $\widehat{\Bbb R}_{[s]}$ the real Grassmann algebra with $s$ generators
 and $\widehat{\Bbb C}_{[s]}$ its complexification.
We may denote them respectively as $\widehat{\Bbb R}$ and $\widehat{\Bbb C}$
  when the number of generators can be kept implicit.

\bigskip

\begin{flushleft}
{\bf Azumaya/matrix super $C^{\infty}$-manifolds with a fundamental module}
\end{flushleft}
\begin{definition} {\bf [Azumaya/matrix super $C^{\infty}$-manifold with a fundamental module] }$\;$ {\rm
 Let
   \begin{itemize}
    \item[(1)] (Cf.$\;${\it topology $X$ underlying D-brane world-volume})\\[.6ex]
	 $X$ be a $C^{\infty}$-manifold (of some dimension $m$),
	  regarded also as
	   a $C^{\infty}$-scheme with the structure sheaf ${\cal O}_X$ of $C^{\infty}$-functions on $X$;
	 $\;{\cal O}_X^{\,\Bbb C}:= {\cal O}_X\otimes_{\Bbb R}{\Bbb C}$
	   the complexification of ${\cal O}_X$;
     	        	
    \item[(2)] (Cf.$\:${\it Chan-Paton bundle $E$ on D-brane world-volume $X$})\\[.6ex]
     $E$ be a complex smooth vector bundle over $X$ (of some rank$_{\Bbb C}$ $r$);\\
	 ${\cal E}$ be the sheaf of smooth sections of $E$,
	  which is naturally an ${\cal O}_X^{\,\Bbb C}$-module;
	
    \item[\LARGE $\cdot$] ({\it Azumaya/matrix-type noncommutative structure on $X$ associated to $E$})\\[.6ex]
	 $\End_{\Bbb C}(E)$ be the endomorphism bundle of $E$,
  	  which is isomorphic to $E\otimes_{\Bbb C}E^{\vee}$ canonically
	   (here, $E^{\vee}=$ the dual complex vector bundle of $E$);\\
     $\Endsheaf_{{\cal O}_X^{\,\Bbb C}}({\cal E})
	     \simeq {\cal E}\otimes_{{\cal O}_X^{\,\Bbb C}}{\cal E}^{\vee}$
	   be the sheaf of endomorphisms	of ${\cal E}$ as a ${\cal O}_X^{\,\Bbb C}$-module.
   \end{itemize}	
 Recall that these data define an {\it Azumaya/matrix $C^{\infty}$-manifold with a fundamental module}
  $$
    (X^{\!A\!z},{\cal E})\;
	  :=\; (X, {\cal O}_X^{A\!z}:= \Endsheaf_{{\cal O}_X^{\,\Bbb C}}({\cal E}), {\cal E})
  $$
  as a ringed space
     with the underlying topology $X$ and the structure sheaf
             the sheaf $\Endsheaf_{{\cal O}_X^{\,\Bbb C}}({\cal E})$ of
			  endomorphism algebras of ${\cal E}$.
 ([L-Y2] (D(11.1))).
 The noncommutative structure sheaf ${\cal O}_X^{A\!z}$ acts on ${\cal E}$ from the left
  via the built-in fundamental representation.
  
 Let, in addition,
    \begin{itemize}
	\item[(3)] ({\it Superification; cf.$\:$spinor bundle $S$ to incorporate world-volume supersymmetry})\\[.6ex]
	 $S$ be a real smooth vector bundle over $X$ (of some rank$_{\Bbb R}$ $s$);
	 $\;{\cal S}$ be the sheaf of smooth sections of $S$, 	
	 $\;S^{\,\Bbb C}:=S\otimes_{\Bbb R}\!{\Bbb C}$ and
	   ${\cal S}^{\,\Bbb C}:= {\cal S}\otimes_{{\cal O}_X}\!{\cal O}_X^{\,\Bbb C}$
	   be their complexification respectively;\\
    their dual are denoted respectively by
	  $S^{\vee}$, ${\cal S}^{\vee}$, $S^{\vee, {\Bbb C}}$, ${\cal S}^{\vee, {\Bbb C}}$.	

    \item[\LARGE $\cdot$]
	({\it ${\Bbb Z}/2$-graded $\,{\Bbb Z}/2$-commutative structure sheaf on $X$})\\[.6ex]	
	 $\bigwedge_{\Bbb R}^{\bullet}S^{\vee} := \oplus_{l=0}^s \bigwedge_{\Bbb R}^lS^{\vee} $
	   be the exterior ${\Bbb R}$-algebra bundle (= Grassmann ${\Bbb R}$-algebra bundle)
	      generated by $S^{\vee}$;
	 $\;\bigwedge_{{\cal O}_X}^{\bullet}{\cal S}^{\vee}
	    := \oplus_{l=0}^s \bigwedge_{{\cal O}_X}^l{\cal S}^{\vee}$
	   be the sheaf of exterior ${\cal O}_X$-algebras (= sheaf of Grassmann ${\cal O}_X$-algebras)
	      generated by ${\cal S}^{\vee}$;
     here,
	   $\bigwedge_{\Bbb R}^0S^{\vee}=S^{\vee}$ and
	   $\bigwedge_{{\cal O}_X}^0{\cal S}^{\vee}={\cal S}^{\vee}$ by convention;
	 their natural ${\Bbb Z}/2$-grading is specified by
	  $$
	   \begin{array}{lcl}
	    (\bigwedge_{\Bbb R}^{\bullet}S^{\vee})_{\even}\;
		     =\; \bigwedge_{\Bbb R}^{\even}S^{\vee}\,,
         &&	(\bigwedge_{\Bbb R}^{\bullet}S^{\vee})_{\odd}\;
		           =\; \mbox{$\bigwedge$}_{\Bbb R}^{\odd}S^{\vee}\,, \\[1.2ex]
		(\bigwedge_{{\cal O}_X}^{\bullet}{\cal S}^{\vee})_{\even}\;
		     =\;   \bigwedge_{{\cal O}_X}^{\even}{\cal S}^{\vee}\,,
	     && (\bigwedge_{{\cal O}_X}^{\bullet}{\cal S}^{\vee})_{\odd}\;
		     =\;  \bigwedge_{{\cal O}_X}^{\odd}{\cal S}^{\vee}\,;
       \end{array}			
	  $$\\[.6ex]
     $\widehat{\cal O}_X\; :=\;  \bigwedge_{{\cal O}_X}^{\bullet}{\cal S}^{\vee}\;$
        be the {\it ${\Bbb Z}/2$-graded $\,{\Bbb Z}/2$-commutative structure sheaf on $X$ determined by $S$}
  	    (or, equivalently, {\it by ${\cal S}^{\vee}$}; cf.\ Remark~2.1.3);
	 $\;\;\widehat{\cal O}_X^{\,\Bbb C}:= \widehat{\cal O}_X\otimes_{\Bbb R}{\Bbb C}$
	   its complexification; \\
      $\widehat{X}:= (X,\widehat{\cal O}_X)$ be the {\it supermanifold associated to $S$ on $X$};

    \item[\LARGE $\cdot$]
  ({\it ${\Bbb Z}/2$-graded Azumaya/matrix-type noncommutative structure sheaf on $X$})\\[.6ex]
    $\widehat{E}:= E\otimes_{\Bbb R}\bigwedge^{\bullet}_{\Bbb R}S^{\vee}$
	   be the {\it superification of $E$ by $S$};\\
    $\widehat{\cal E}:= {\cal E}\otimes_{{\cal O}_X}\!\!\widehat{\cal O}_X$
	   be the {\it superification} of ${\cal E}$ by ${\cal S}^{\vee}$;
    $$
	   \widehat{\cal O}_X^{A\!z}\;
	   :=\; {\cal O}_X^{A\!z}\otimes_{{\cal O}_X}\!\widehat{\cal O}_X\;
	    =\;  \Endsheaf_{{\cal O}_X^{\,\Bbb C}}({\cal E})
		       \otimes_{{\cal O}_X}\!\mbox{$\bigwedge$}^{\bullet}_{{\cal O}_X}{\cal S}^{\vee}
	$$
	 be the {\it ${\Bbb Z}/2$-graded Azumaya/matrix-type noncommutative structure sheaf on $X$ determined by the
	  (complex, real)-pair $(E,S)$ of bundles on $X$.}
   \end{itemize}
 The ringed space with a module
  $$
    (\widehat{X}^{\!A\!z},\widehat{\cal E})\;
	  :=\; (X, \widehat{\cal O}_X^{A\!z}, \widehat{\cal E})
  $$
  is called an {\it Azumaya/matrix super $C^{\infty}$-manifold with a fundamental module}.
			
 The noncommutative structure sheaf $\widehat{\cal O}_X^{A\!z}$ acts on $\widehat{\cal E}$ from the left
   via the built-in fundamental representation.
 This action commutes with the built-in right action of $\widehat{\cal O}_X^{\,\Bbb C}$ on $\widehat{\cal E}$.
 Thus, with $\widehat{\cal E}$ as a right $\widehat{\cal O }_X^{\Bbb C}$-module,
   $$
      \widehat{\cal O}_X^{A\!z}\;
	    \simeq\; \Endsheaf_{\widehat{\cal O}_X^{\,\Bbb C}}(\widehat{\cal E})
   $$
   canonically.
}\end{definition}

\bigskip

\begin{notation} {\bf [basic]}$\;$ {\rm
 A few elementary statements are made to bring out all basic notations.
  \begin{itemize}
    \item[\LARGE $\cdot$]
     With $\widehat{\cal E}$ as a right $\widehat{\cal O}_X^{\,\Bbb C}$-module and
	     its dual $\widehat{\cal  E}^{\vee}
	                       := \Homsheaf_{\widehat{\cal O}_X}(\widehat{\cal E},\widehat{\cal O}_X)$
         as a left $\widehat{\cal O}_X^{\,\Bbb C}$-module, 						
      $\widehat{\cal O}_X^{A\!z}
        \simeq   \widehat{\cal E}\otimes_{\widehat{\cal O}_X^{\,\Bbb C}}  \widehat{\cal E}^{\vee}$
      also canonically.
	
   \item[\LARGE $\cdot$]	
     In terms of bundles,
	  $\Endsheaf_{\widehat{\cal O}_X^{\,\Bbb C}}(\widehat{\cal E})$
	   is the sheaf of $C^{\infty}$-sections of the bundle of
	    endomorphisms (acting from the left) of fibers of $\widehat{E}$
		as right $\widehat{\Bbb C}_{[s]}$-modules
		with respect to a local trivialization of $S$ that fixes an isomorphism of fibers of
		$(\bigwedge^{\bullet}_{\Bbb R}S^{\vee})^{\Bbb C}$ with $\widehat{\Bbb C}_{[s]}$.
     Note that $\Aut(\widehat{\Bbb C}_{[s]})$ is nontrivial (cf.$\:$Corollary~2.2.1.2). 		
   		                                                % Corollary [$\Aut_{\widehat{\Bbb C}_{[s]}}(\widehat{E})$]
     With a slight abuse of notation, we will denote this endomorphism bundle by
	   $\End_{\widehat{\Bbb C}_{[s]}}(\widehat{E})$,
	   or $\End_{\widehat{\Bbb C}}(\widehat{E})$ when $s$ is implicit.
	
   \item[\LARGE $\cdot$]
    Recall the notation from Definition~2.1.1.
	                                  % Definition [Azumaya/matrix super $C^{\infty}$-manifold with a fundamental module]
    By construction,
     $\widehat{\cal O}_X^{A\!z}
	    ={\cal O}_X^{A\!z}
		       \oplus
			    {\cal O}_X^{A\!z}\otimes_{{\cal O}_X}\bigwedge^{\ge 1}{\cal S}^{\vee}$,
      with
	   $({\cal O}_X^{A\!z}\otimes_{{\cal O}_X}\bigwedge^{\ge 1}{\cal S}^{\vee})^{s+1}
	     =0$.
    For $\widehat{m}$
	   a section of $\Endsheaf_{\widehat{\cal O}_X^{\,\Bbb C}}(\widehat{\cal E})$,
	 let $\widehat{m}= m_{(0)}+ \widehat{m}_{(\ge 1)}$ be the corresponding decomposition.
	Note that $(\widehat{m}_{(\ge 1)})^{s+1}=0$.
	
   \item[\LARGE $\cdot$]	
    The short exact sequence of ${\cal O}_X^{\Bbb C}$-modules
	  $$
	    0\;\longrightarrow\;
		      {\cal O}_X^{A\!z}\otimes_{{\cal O}_X}
			        \mbox{$\bigwedge$}^{\ge 1}{\cal S}^{\vee}\;
			 \longrightarrow\; \widehat{\cal O}_X^{A\!z}\;
			 \longrightarrow\; {\cal O}_X^{A\!z}\; \longrightarrow\; 0
	  $$
	 splits by the built-in inclusion ${\cal O}_X^{A\!z}\hookrightarrow \widehat{\cal O}_X^{A\!z}$.
   One has thus built-in morphisms
    $$
      \xymatrix{
        X^{\!A\!z}\; \ar@{^{(}->}[r]^-{\iota_{X^{\!A\!z}}}
	     & \;\widehat{X}^{A\!z}\; \ar@{->>}[r]^-{\pi_{X^{A\!z}}}
	     & \;X^{\!A\!z}
       }
    $$
     of ringed spaces
     with $\pi_{X^{\!A\!z}}\circ \iota_{X^{\!A\!z}}
           =\Id_{X^{\!A\!z}}$ the identity map on $X^{\!A\!z}$. 		
 \end{itemize}
}\end{notation}

\bigskip

\begin{remark} $[\,S$ and ${\cal S}^{\vee}$ in defining $\widehat{\cal O}_X\,]\;$
{\rm
 There is a subtle point that may be puzzling to differential geometers (but is most natural to algebraic geometers;
   cf.$\:$[B-F: Sec.$\:$1], [Fu: Appendix B.3], [Ha: Chap.$\:$II: Exercise 5.18] ).
 Namely,
  \begin{itemize}
   \item[\bf Q.] {\it
    Why is $\widehat{\cal O}_X$ defined as
      $\bigwedge^{\bullet}_{{\cal O}_X}\!{\cal S}^{\vee}$,
	  rather than as $\bigwedge^{\bullet}_{{\cal O}_X}\!{\cal S}$,
	  for the supermanifold associated\\ to $S$ on $X$?}
  \end{itemize}
 In the commutative/bosonic algebraic case, the total space of a vector bundle $V$ over a scheme $Z$
  is a scheme that is affine over $Z$ whose scheme structure is given by
   $\boldSpec(\Sym^{\bullet}_{{\cal O}_Z}({\cal V }^{\vee}))$,
     where
	   ${\cal V}$ is the sheaf of local sections of $V$ and
	   ${\cal V}^{\vee}:=\Homsheaf_{{\cal O}_Z}({\cal V},{\cal O}_Z)$ the dual sheaf.
 Only when it is defined this way does one have the correct functorial property that
   a bundle map $V_1\stackrel{f}{\rightarrow} V_2$ over $Z$
     corresponds contravariantly to an ${\cal O}_Z$-algebra-homomorphism
	 ${\cal O}_{V_2}\stackrel{f^{\sharp}}{\rightarrow} {\cal O}_{V_1}$.
 Here, we adopt the same, though in an anti-commuting situation.	
 Indeed, this is also consistent with physicists' intuition on supermanifolds.
 Though odd elements in the structure sheaf are nilpotent and, hence, cannot change the underlying topology,
  physicists tend to think of a supermanifold
  as a ``manifold with some directions parameterized by anticommuting coordinates".
 In this intuition, $\widehat{X}$ is simply the ``total space of $S$ with the fiber directions anticommutitized".
 To describe such directions, one needs anticommuting coordinate functions,
  which are nothing but local sections of $\Hom_{\Bbb R}(S, \underline{\Bbb R})= S^{\vee}$
  but anticommutitized via the built-in embedding of $S^{\vee}$ into the Grassmann-algebra bundle
   $\bigwedge^{\bullet}_{\Bbb R} S^{\vee}$.
 Here, $\underline{\Bbb R}$ is the constant real  line bundle of rank $1$ on $X$.
  
 This also explains
   why we call $\widehat{\cal O}_X$ the structure sheaf determined by $S$ in the language of bundles,
   but {\it by the dual sheaf ${\cal S}^{\vee}$} in the language of sheaves:
 The former refers to the supermanifold $\widehat{X}$
   as ``the total space of $S$ but with anticommuting directions along fibers of $S$ over $X$",
  while the latter refers to the additional anticommuting functions from sections in ${\cal S}^{\vee}$
     in order to extend the sheaf of rings ${\cal O}_X$ to the sheaf of rings $\widehat{\cal O}_X$
	 on the topological space $X$.
}\end{remark}

\bigskip

\begin{flushleft}
{\bf Two Main Theorems on
          maps from $(\widehat{X}_{[s_1]},\widehat{E})$ to $\widehat{Y}_{[s_2]}$ }
\end{flushleft}
Let $\widehat{Y}_{[s_2]}$ be a real super $C^{\infty}$-manifold.

\bigskip

\begin{definition}
{\bf [$C^{\infty}$-admissible ring-homomorphism to
           $C^{\infty}(\End_{\widehat{\Bbb C}_{[s_1]}}(\widehat{E}))$]}$\;$ {\rm
 A ring-homomorphism
  $$
	  \xymatrix{
	   C^\infty(\End_{\widehat{\Bbb C}_{[s_1]}}(\widehat{E}))
	      &&& C^{\infty}(\widehat{Y}_{[s_2]})\ar[lll]_-{\widehat{\varphi}^{\sharp}}
	   }
  $$
  over ${\Bbb R}\hookrightarrow {\Bbb C}$ is said to be {\it $C^{\infty}$-admissible}
 if it extends canonically to the following commutative diagram of ring-homomorphisms
  $$
		 \xymatrix{
	       C^{\infty}(\End_{\widehat{\Bbb C}_{[s_1]}}(\widehat{E}))
			     &&& C^{\infty}(\widehat{Y}_{[s_2]}) \ar[lll]_-{\widehat{\varphi}^{\sharp}}
			                                      \ar@{_{(}->}^-{pr_{\widehat{Y}_{[s_2]}}^{\sharp}}[d]   \\			
			   \rule{0ex}{1.2em}\;\;C^{\infty}(\widehat{X}_{[s_1]})\;\;
			                                                    \ar@{^{(}->}[u]
			                                                    \ar@{^{(}->}[rrr]_-{pr_{\widehat{X}_{[s_1]}}^{\sharp}}
				 &&& C^{\infty}(\widehat{X\times Y}_{[s_1,s_2]})
				            \ar[lllu]_-{\tilde{\widehat{\varphi}}^{\sharp}}		
		}
	 $$
	 such that,
	   if denoting
         $$
		   A_{\widehat{\varphi}}\;
			    :=\; C^{\infty}(\widehat{X})\langle \Image(\widehat{\varphi}^{\sharp})\rangle\;
			    :=\; \Image(\tilde{\widehat{\varphi}}^{\sharp})\;\;\;
			   \subset\; C^{\infty}(\End_{\widehat{\Bbb C}_{[s_1]}}(\widehat{E}))
		  $$
		 and
		  $$
		     A_{\widehat{\varphi},0}\;
			   :=\;  \tilde{\widehat{\varphi}}^{\sharp}
			            (C^{\infty}(\widehat{X\times Y}_{[s_1,s_2]})_{\even})\;\;\;
			   \subset\; A_{\widehat{\varphi}}\,,
		  $$
   then
     the underlying commutative diagram of ring-homomorphisms
  	    $$
		 \xymatrix{
	       A_{\widehat{\varphi}}
			     &&& C^{\infty}(\widehat{Y}_{[s_2]}) \ar[lll]_-{\widehat{\varphi}^{\sharp}}
			                                      \ar@{_{(}->}^-{pr_{\widehat{Y}_{[s_2]}}^{\sharp}}[d]   \\			
			   \rule{0ex}{1.2em}\;\;C^{\infty}(\widehat{X}_{[s_1]})\;\;
			                                              \ar@{^{(}->}[u]
			                                              \ar@{^{(}->}[rrr]_-{pr_{\widehat{X}_{[s_1]}}^{\sharp}}
				 &&& C^{\infty}(\widehat{X\times Y}_{[s_1,s_2]})
				              \ar@{->>}[lllu]_-{\tilde{\widehat{\varphi}}^{\sharp}}		
		}
	   $$
	  restricts to a commutative diagram of $C^{\infty}$-ring-homomorphisms
	   $$
		 \xymatrix{
		  & A_{\widehat{\varphi},0}
			     &&& C^{\infty}(\widehat{Y}_{[s_2]})_{\even}
				              \ar[lll]_-{\widehat{\varphi}^{\sharp}|_{\tinyeven}}
			                  \ar@{_{(}->}^-{pr_{\widehat{Y}_{[s_2]}}^{\sharp}|_{\tinyeven}}[d]   \\			
		  &   \rule{0ex}{1.2em}\;\;C^{\infty}(\widehat{X}_{[s_1]})_{\even}\;\;
			                  \ar@{^{(}->}[u]
			                  \ar@{^{(}->}[rrr]_-{pr_{\widehat{X}_{[s_1]}}^{\sharp}|_{\tinyeven}}
				 &&& C^{\infty}(\widehat{X\times Y}_{[s_1,s_2]})_{\even}
				              \ar@{->>}[lllu]_-{\tilde{\widehat{\varphi}}^{\sharp}|_{\tinyeven}}		 &.
		}
	   $$
	  Here
	    $A_{\widehat{\varphi},0}$ is regarded as a  quotient $C^{\infty}$-ring
		of $C^{\infty}(\widehat{X\times Y}_{[s_1,s_2]})_{\even}$.		
}\end{definition}

\medskip

\begin{theorem} {\bf [every ring-homomorphism in question $C^{\infty}$-admissible]}$\;$
 Every ring-homomorphism
  $$
    \widehat{\varphi}^{\sharp}\;:\; C^{\infty}(\widehat{Y}_{[s_2]})\;
	  \longrightarrow\; C^{\infty}(\End_{\widehat{\Bbb C}_{[s_1]}}(\widehat{E}))
  $$
   over ${\Bbb R}\hookrightarrow {\Bbb C}$
 is $C^{\infty}$-admissible.
\end{theorem}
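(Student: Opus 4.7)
The plan is to reduce Theorem~2.1.5 to its bosonic counterpart (Theorem~3.1.1 of [L-Y4] (D(11.3.1))) and then reinstate the super structure through the nilpotent-extension machinery of Lemmas~1.2.1--1.2.3. Following the outline of Sec.~2, I will first treat the case when $X$ is a point (Sec.~2.3.1) and then globalize to general $X$ (Sec.~2.4.1).

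First I construct the ring-homomorphism $\tilde{\widehat{\varphi}}^{\sharp}$. The ring $C^{\infty}(\widehat{X\times Y}_{[s_1,s_2]})$ is generated, as an ordinary tensor product over $C^{\infty}(X\times Y)$, by the commuting pair of subrings $C^{\infty}(\widehat{X}_{[s_1]})$ and $C^{\infty}(\widehat{Y}_{[s_2]})$, so by its universal property a ring-homomorphism $\tilde{\widehat{\varphi}}^{\sharp}$ extending $\widehat{\varphi}^{\sharp}$ on the second factor and the built-in inclusion on the first exists provided their two images commute in the target. This is automatic: the image of $C^{\infty}(\widehat{X}_{[s_1]})$ lies in the right action of $\widehat{\cal O}_X^{\,\Bbb C}$ on $\widehat{\cal E}$, which is central in $\widehat{\cal O}_X^{A\!z}=\End_{\widehat{\cal O}_X^{\,\Bbb C}}(\widehat{\cal E})$ since right-module action commutes (with no sign) with every left-module endomorphism. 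Hence $\tilde{\widehat{\varphi}}^{\sharp}$ exists and makes the upper diagram of ring-homomorphisms commute, simultaneously defining $A_{\widehat{\varphi}}$ and its even subring $A_{\widehat{\varphi},0}$.

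Next I address the $C^{\infty}$-ring property of the restricted diagram. Composing $\widehat{\varphi}^{\sharp}$ with the canonical projection $\pi:C^{\infty}(\End_{\widehat{\Bbb C}_{[s_1]}}(\widehat{E}))\twoheadrightarrow C^{\infty}(\End_{\Bbb C}(E))$ that kills the nilpotent ideal generated by the odd elements of $\widehat{\cal O}_X$, and restricting to $C^{\infty}(Y)\hookrightarrow C^{\infty}(\widehat{Y}_{[s_2]})$, yields a ring-homomorphism $\varphi^{\sharp}_{(0)}:C^{\infty}(Y)\to C^{\infty}(\End_{\Bbb C}(E))$ over ${\Bbb R}\hookrightarrow{\Bbb C}$. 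By Theorem~3.1.1 of [L-Y4] (D(11.3.1)) this $\varphi^{\sharp}_{(0)}$ is $C^{\infty}$-admissible, extending to a $C^{\infty}$-ring-homomorphism $\tilde{\varphi}^{\sharp}_{(0)}:C^{\infty}(X\times Y)\to A_{\varphi_{(0)}}\subset C^{\infty}(\End_{\Bbb C}(E))$. For the $X=$ point case, I combine this with the primary decomposition of $\widehat{E}$ under the commuting system $\{\widehat{m}^{\mu}\}$ developed in Sec.~2.2 to identify $A_{\widehat{\varphi},0}$ as a split nilpotent extension of $A_{\varphi_{(0)}}$. Lemma~1.2.1 then expands $h(\widehat{m}^{1},\ldots,\widehat{m}^{n})$ around the bosonic reductions $m^{\mu}_{(0)}$; Lemma~1.2.2 installs canonical $C^{\infty}$-ring structures on $C^{\infty}(\widehat{X\times Y}_{[s_1,s_2]})_{\even}$ and on $A_{\widehat{\varphi},0}$; and Lemma~1.2.3 promotes $\tilde{\varphi}^{\sharp}_{(0)}$ to the conclusion that $\tilde{\widehat{\varphi}}^{\sharp}|_{\even}$ is a $C^{\infty}$-ring-homomorphism. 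The remaining arrows of the restricted diagram are either inclusions into nilpotent extensions (covered by Lemma~1.2.2) or the defining quotient onto $A_{\widehat{\varphi},0}$, hence $C^{\infty}$-ring-homomorphisms automatically; commutativity follows from the construction of $\tilde{\widehat{\varphi}}^{\sharp}$.

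Globalization to arbitrary $X$ then proceeds by a patching argument along local trivializations of $E$ and $S$: all the constructions above are natural in such trivializations, and paracompactness of $X$ ensures the glued $C^{\infty}$-structure on $A_{\widehat{\varphi},0}$ is well-defined. The main technical obstacle I expect to encounter is the one to be resolved in Sec.~2.3.1, namely establishing the split decomposition $A_{\widehat{\varphi},0}\cong A_{\varphi_{(0)}}\oplus N_{\widehat{\varphi}}$ with $N_{\widehat{\varphi}}$ nilpotent, as needed to apply Lemma~1.2.3. Because $\widehat{\varphi}^{\sharp}$ is not required to preserve the ${\Bbb Z}/2$-grading, the super corrections $\widehat{m}^{\mu}_{(\ge 1)}$ and $\widehat{\varphi}^{\sharp}(\vartheta^i)$ acquire both even and odd $X$-graded components, so the required embedding $A_{\varphi_{(0)}}\hookrightarrow A_{\widehat{\varphi},0}$ does not respect the natural ${\Bbb Z}/2$-grading of the target and must be produced by hand; this is precisely the point where the primary-decomposition analysis of Sec.~2.2 plays its central role.
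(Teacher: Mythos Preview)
Your construction of $\tilde{\widehat{\varphi}}^{\sharp}$ in the first paragraph has a genuine gap. The ring $C^{\infty}(\widehat{X\times Y}_{[s_1,s_2]}) = C^{\infty}(X\times Y)[\theta^1,\ldots,\theta^{s_1},\vartheta^1,\ldots,\vartheta^{s_2}]^{\anticommuting}$ is \emph{not} generated as an ordinary tensor product (nor even ring-theoretically) by $C^{\infty}(\widehat{X}_{[s_1]})$ and $C^{\infty}(\widehat{Y}_{[s_2]})$: the base $C^{\infty}(X\times Y)$ is strictly larger than the subring generated by $C^{\infty}(X)$ and $C^{\infty}(Y)$ (for instance $e^{xy}$ on $\Bbb R\times\Bbb R$ lies in the former but not the latter). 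So no universal property produces $\tilde{\widehat{\varphi}}^{\sharp}$ out of $\widehat{\varphi}^{\sharp}$ and the built-in inclusion alone; you must first say what $\tilde{\widehat{\varphi}}^{\sharp}(f)$ is for a general $f\in C^{\infty}(X\times Y)$.

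Your second paragraph does not close this gap. Invoking [L-Y4: Theorem~3.1.1] for the bosonic reduction $\varphi^{\sharp}_{(0)}=\pi\circ\widehat{\varphi}^{\sharp}|_{C^{\infty}(Y)}$ yields only $\tilde{\varphi}^{\sharp}_{(0)}:C^{\infty}(X\times Y)\to C^{\infty}(\End_{\Bbb C}(E))$, which on $C^{\infty}(Y)$ sends $y^i\mapsto m_{i,(0)}$ rather than $\widehat{m}_i$; it is not a lift of $\widehat{\varphi}^{\sharp}$ but of its shadow modulo nilpotents. Lemma~1.2.3 tells you that any ring-extension of a $C^{\infty}$-map through nilpotent extensions is again $C^{\infty}$, but it does not manufacture the extension for you, and the splitting $A_{\widehat{\varphi},0}\cong A_{\varphi_{(0)}}\oplus N_{\widehat{\varphi}}$ you seek already presupposes that $\tilde{\widehat{\varphi}}^{\sharp}$ has been built. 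The paper handles precisely this point by defining $\check{\widehat{\varphi}}^{\sharp}(f)$ pointwise over $X$ --- a priori only a section in $C^{-\infty}(\End_{\widehat{\Bbb C}_{[s_1]}}(\widehat{E}))$ --- and then invoking the Malgrange Division Theorem against the spectral subscheme $\check{\varSigma}_{\widehat{\varphi}}\subset X\times\widehat{Y}_{[s_2]}$ to establish smoothness. This analytic step (Step~(b) in Sec.~2.4.1) is the essential content missing from your globalization sketch; ``patching along local trivializations'' does not substitute for it, since you have nothing yet defined on $C^{\infty}(X\times Y)$ to patch.
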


\bigskip

Theorem~2.1.5
              % Theorem [every ring-homomorphism in question $C^{\infty}$-admissible]
 is a super generalization of [L-Y4: Theorem~3.1.1] (D(11.3.1));	
it justifies the following definition:
	 	
\bigskip

\begin{definition} {\bf [$C^{\infty}$-map from Azumaya/matrix supermanifold]}$\;$ {\rm
 A {\it $C^{\infty}$-map} (synonymously, {\it smooth map})
  $$
    \widehat{\varphi}\;:\;
	  (\widehat{X}_{[s_1]}^{A\!z}, \widehat{E})\;\longrightarrow\; \widehat{Y}_{[s_2]}
  $$
  from an Azumaya/matrix super $C^{\infty}$-manifold with a fundamental module
    $(\widehat{X}_{[s_1]}^{A\!z},\widehat{E})$ to a super $C^{\infty}$-manifold $\widehat{Y}_{[s_2]}$
  is {\sl defined	contravariantly} by a ring-homomorphism between function-rings
  $$
    \widehat{\varphi}^{\sharp}\;:\;
	 C^{\infty}(\widehat{Y}_{[s_2]})\; \longrightarrow\;
	 C^{\infty}(\End_{\widehat{\Bbb C}_{[s_1]}}(\widehat{E}))\,.
  $$
}\end{definition}

\bigskip

Thus,
  the notion of a
    `smooth map from an Azumaya/matrix supermanifold to a real manifold or supermanifold',
  	   first introduced in [L-Y3] (D(11.2)),
	is completely parallel to its counterpart in the realm of algebraic geometry in
	 [L-Y1] (D(1)) and [L-L-S-Y] (D(2))
     for the description of fundamental stacked D-branes in the algebro-geometric situation.

\bigskip

\begin{remark} $[$underlying $C^\infty$-map from $X^{\!A\!z}\,]\;$ {\rm
 Recall Example~1.4.5 and Notation~2.1.2.
         % Example [from spinor bundle to super $C^{\infty}$-scheme]
         % Notation [basic]
 Then,
     every $C^\infty$-map $\widehat{\varphi}:\widehat{X}^{A\!z}\rightarrow \widehat{Y}$
	 has an underlying $C^{\infty}$-map $\varphi:X^{\!A\!z}\rightarrow Y$, defined by the following compositions
   $$
    \xymatrix{
      & \;\widehat{X}^{A\!z}\;    \ar[rr]^-{\widehat{\varphi}}
	     && \;\widehat{Y}\; \ar@{->>}[d]^-{\pi_Y}  \\
      & \rule{0ex}{1.2em}X^{\!A\!z}
	       \ar@{^{(}->}[u]^-{\iota_{X^{\!A\!z}}}
		   \ar@{.>}[rr]^-{\varphi}
	     &&  \;Y\;    & \hspace{-2em}.
    }
   $$
 As a consequence of Sec.$\:$1.2,
   one may also define equivalently the smoothness of $\widehat{\varphi}$
   in terms of the smoothness of the underlying $\varphi$.
}\end{remark}
	
\bigskip	
	
The following theorem gives a super generalization of [L-Y4: Theorem~3.2.1] (D(11.3.1)):
	
\bigskip

\begin{theorem}
{\bf [$C^{\infty}$-map from Azumaya/matrix supermanifold to ${\Bbb R}^{n|s_2}$]}$\;$
 Let
  $(\widehat{X}_{[s_1]}^{A\!z},\widehat{E}) $ be an Azumaya/matrix super $C^{\infty}$-manifold
    with a fundamental module  and
 consider the super $C^{\infty}$-manifold 	
  ${\Bbb R}^{n|s_2}$ with global coordinates
  $(y^1,\,\cdots\,, y^n\,|\, \vartheta^1,\,\cdots\,,\,\vartheta^{s_2} )$.
 Recall Notation~2.1.2.
         % Notation [basic]
 Let
  $$
    \widehat{\eta}\;:\;
	  \left\{
	  \begin{array}{lll}
	    y^i  & \longmapsto
	           & \widehat{m}_i\,=\, m_{i,(0)}+ \widehat{m}_{i,(\ge 1)} \\
        \vartheta^l  & \longmapsto
               & \Theta_l		
	  \end{array}\right.\;
	         \in\, C^{\infty}(\End_{\widehat{\Bbb C}_{[s_1]}}(\widehat{E}))\,,
  $$			
   for $i =1,\,\ldots\,,n\,,\; l=1,\,\ldots\,,\, s_2$,
 be an assignment  such that
  \begin{itemize}
   \item[(1)]
     $\widehat{m}_i\widehat{m}_j\;=\;\widehat{m}_j\widehat{m}_i\,$,
	 $\;\widehat{m}_i\Theta_l=\Theta_l\widehat{m}_i\,$,
	 $\;\Theta_l\Theta_{l^{\prime}}= - \Theta_{l^{\prime}}\Theta_l\,$,
	 for all $i,\, j,\, l,\, l^{\prime}$;
	
   \item[(2)]
    for every $p\in X$,
	 the eigenvalues of the restriction
	   $m_{i,(0)}(p)\in \End_{\Bbb C}(E|_p)\simeq M_{r\times r}({\Bbb C})$
	   are all real.
 \end{itemize}
 Then,
  $\widehat{\eta}$ extends uniquely to a $C^{\infty}$-admissible ring-homomorphism
  $$
    \widehat{\varphi}_{\widehat{\eta}}^{\sharp}\;:\;
	 C^{\infty}({\Bbb R}^{n|s_2})\;
	 \longrightarrow\; C^{\infty}(\End_{\widehat{\Bbb C}_{[s_1]}}(\widehat{E}))
  $$
  over ${\Bbb R}\hookrightarrow{\Bbb C}$ and, hence,
  defines a $C^{\infty}$-map
   $\widehat{\varphi}_{\widehat{\eta}}:
     (\widehat{X}_{[s_1]}^{A\!z}, \widehat{E})
	  \rightarrow {\Bbb R}^{n|s_2}$.
\end{theorem}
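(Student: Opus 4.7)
The plan is to build $\widehat{\varphi}_{\widehat{\eta}}^{\sharp}$ in two stages---first on the bosonic subring $C^{\infty}({\Bbb R}^n) \subset C^{\infty}({\Bbb R}^{n|s_2})$, then algebraically across the odd generators---and to invoke Theorem~2.1.5 to upgrade ``ring-homomorphism'' to $C^{\infty}$-admissibility for free. Recall from Notation~2.1.2 the decomposition $\widehat{m}_i = m_{i,(0)} + \widehat{m}_{i,(\ge 1)}$, with the $\widehat{m}_{i,(\ge 1)}$'s jointly nilpotent.

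\textbf{Stage 1 (bosonic $C^{\infty}$-functional calculus).} Hypothesis~(1) forces $[m_{i,(0)}, m_{j,(0)}] = 0$ by extracting the lowest-degree part of $[\widehat{m}_i, \widehat{m}_j] = 0$, and hypothesis~(2) gives $m_{i,(0)}(p)$ real spectrum at every $p \in X$. [L-Y4: Theorem~3.2.1] therefore supplies a unique $C^{\infty}$-ring-homomorphism $\varphi_0^{\sharp} : C^{\infty}({\Bbb R}^n) \to C^{\infty}(\End_{\Bbb C}(E))$ with $y^i \mapsto m_{i,(0)}$. Post-composing with the natural central inclusion $C^{\infty}(\End_{\Bbb C}(E)) \hookrightarrow C^{\infty}(\End_{\widehat{\Bbb C}_{[s_1]}}(\widehat{E}))$ (each endomorphism of $E$ acts on $\widehat{E} = E\otimes_{\Bbb R}\bigwedge^{\bullet}_{\Bbb R}S^{\vee}$ via the first tensor factor) keeps $\varphi_0^{\sharp}$ a $C^{\infty}$-ring-homomorphism, landing in the center of $C^{\infty}(\End_{\widehat{\Bbb C}_{[s_1]}}(\widehat{E}))$.

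\textbf{Stage 2 (nilpotent extension, then odd generators).} Let $R$ be the commutative subring of $C^{\infty}(\End_{\widehat{\Bbb C}_{[s_1]}}(\widehat{E}))$ generated over $C^{\infty}(X)$ by $\Image(\varphi_0^{\sharp})$ and by the $\widehat{m}_{i,(\ge 1)}$'s; it decomposes as $\Image(\varphi_0^{\sharp})$ plus a nilpotent ideal. Lemma~1.2.2 endows $R$ with a canonical $C^{\infty}$-ring structure, and Lemmas~1.2.1 and~1.2.3 force the unique $C^{\infty}$-ring-homomorphism $\widehat{\varphi}^{\sharp}_{\widehat{\eta},0}: C^{\infty}({\Bbb R}^n)\to R$ with $y^i\mapsto \widehat{m}_i$ to be the finite Taylor sum
\[
  \widehat{\varphi}^{\sharp}_{\widehat{\eta},0}(h)\;
   =\;\sum_{d\ge 0}\frac{1}{d!}\sum_{d_1+\cdots+d_n = d}
         \varphi_0^{\sharp}(\partial_1^{d_1}\!\cdots\partial_n^{d_n}\! h)
         \cdot \widehat{m}_{1,(\ge 1)}^{d_1}\cdots\widehat{m}_{n,(\ge 1)}^{d_n}\,.
\]
I then extend over the odd generators by
\[
  \widehat{\varphi}_{\widehat{\eta}}^{\sharp}\!\left(\sum_{I} f_I(y)\,\vartheta^I\right)\;
   :=\; \sum_I \widehat{\varphi}^{\sharp}_{\widehat{\eta},0}(f_I)\cdot \Theta^I\,,
\]
where $\Theta^I := \Theta_{i_1}\cdots\Theta_{i_k}$ for $I=\{i_1<\cdots<i_k\}\subseteq\{1,\ldots,s_2\}$. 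Hypothesis~(1) delivers the anticommutations $\Theta_l\Theta_{l'} + \Theta_{l'}\Theta_l = 0$ and the commutation of each $\Theta_l$ with every $\widehat{m}_i$ and with $C^{\infty}(X)$; decomposing $\Theta_l$ by Grassmann-degree and inducting then shows $\Theta_l$ commutes with every element of $R$ (the key bosonic input being that the commutant of a commuting family with real spectrum is closed under the $C^{\infty}$-functional calculus of that family, as also established in the proof of [L-Y4: Theorem~3.2.1]). Hence the two displayed formulas respect all defining relations of the superpolynomial ring $C^{\infty}({\Bbb R}^{n|s_2})$ and jointly define a ring-homomorphism.

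\textbf{Stage 3 (uniqueness, admissibility, main obstacle).} $C^{\infty}$-admissibility of $\widehat{\varphi}_{\widehat{\eta}}^{\sharp}$ is an immediate application of Theorem~2.1.5 to the just-constructed ring-homomorphism. Uniqueness follows from the same package: any ring-homomorphism extending $\widehat{\eta}$ is $C^{\infty}$-admissible by Theorem~2.1.5, so Lemma~1.2.3 forces the Taylor formula on the bosonic part and multiplicativity pins down the values on the $\vartheta^I$-monomials. The main technical obstacle is concentrated in Stage~1: producing $\varphi_0^{\sharp}$ as a \emph{genuine} $C^{\infty}$-ring-homomorphism into smooth sections---rather than a merely pointwise algebraic assignment---demands the full strength of [L-Y4: Theorem~3.2.1], whose proof rests on the Malgrange Division Theorem applied to the characteristic polynomials of the $m_{i,(0)}$'s. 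A secondary but genuine subtlety is the commutant claim invoked in Stage~2, which is precisely what lets the odd-generator extension respect the supercommutation relations of $C^{\infty}({\Bbb R}^{n|s_2})$ inside the noncommutative target.
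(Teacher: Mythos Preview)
Your Stage~2 contains a genuine gap: the subring $R$ you introduce need not be commutative, so Lemmas~1.2.1--1.2.3 do not apply and your Taylor formula is not, in general, a ring-homomorphism. The hypothesis $[\widehat{m}_i,\widehat{m}_j]=0$ only constrains the \emph{sum} of commutators at each Grassmann degree; it does \emph{not} force $[m_{i,(0)},\widehat{m}_{j,(\ge 1)}]=0$ individually. Concretely, with $s_1=1$, $r=2$, $n=1$, take
\[
   \widehat{m}_1 \;=\; \begin{pmatrix} 0 & 1 \\ \theta & 0 \end{pmatrix}
   \;=\; m_{1,(0)} + \widehat{m}_{1,(\ge 1)}\,,\qquad
   m_{1,(0)} = \begin{pmatrix} 0 & 1 \\ 0 & 0 \end{pmatrix},\quad
   \widehat{m}_{1,(\ge 1)} = \begin{pmatrix} 0 & 0 \\ \theta & 0 \end{pmatrix}.
\]
Then $m_{1,(0)}$ has real spectrum $\{0\}$, hypothesis~(1) is vacuous, yet $[m_{1,(0)},\widehat{m}_{1,(\ge 1)}]=\mbox{\small $\begin{pmatrix}1&0\\0&-1\end{pmatrix}$}\theta\ne 0$. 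Your Taylor formula then gives
$\widehat{\varphi}^{\sharp}_{\widehat{\eta},0}((y^1)^2)
   = m_{1,(0)}^2 + 2\,m_{1,(0)}\widehat{m}_{1,(\ge 1)} + \widehat{m}_{1,(\ge 1)}^{\,2}$,
while the correct value $\widehat{m}_1^{\,2}
   = m_{1,(0)}^2 + m_{1,(0)}\widehat{m}_{1,(\ge 1)}
     + \widehat{m}_{1,(\ge 1)}m_{1,(0)} + \widehat{m}_{1,(\ge 1)}^{\,2}$
differs from it by the nonzero commutator term. (Incidentally, the inclusion $C^{\infty}(\End_{\Bbb C}(E))\hookrightarrow C^{\infty}(\End_{\widehat{\Bbb C}_{[s_1]}}(\widehat{E}))$ is not ``central'' for $r\ge 2$, though this slip is not the essential problem.)

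The paper's proof avoids this by never splitting $\widehat{m}_i$ into $(0)$- and $(\ge 1)$-parts inside the target. Instead it works directly with the commuting family $\{\widehat{m}_i\}$: pointwise over $X$ one takes the primary decomposition $\widehat{E}=\bigoplus_j\widehat{e}_j\widehat{E}$ from Sec.~2.2 with respect to the full $\widehat{m}_i$'s, and on each block Taylor-expands $h\in C^{\infty}({\Bbb R}^n)$ at the \emph{scalar} point $q_j\in{\Bbb R}^n$ (the eigenvalue tuple), substituting $y^i-\lambda_j^i\mapsto(\widehat{m}_i-\lambda_j^i)\widehat{e}_j$. Everything in sight is then a polynomial in the mutually commuting $\widehat{m}_i$'s and scalars, so multiplicativity is automatic. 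Smoothness over $X$ is established afterwards by the Malgrange Division Theorem exactly as in the proof of Theorem~2.1.5, rather than being imported from [L-Y4:~Theorem~3.2.1]. Your reduction-to-the-bosonic-case strategy can be repaired, but only by Taylor-expanding at scalar centers as the paper does, not at the matrix centers $m_{i,(0)}$.
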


\bigskip

\bigskip

The proof of these two theorems are given in Sec.$\:$2.4 after the preparation in Sec.$\:$2.2 and Sec.$\:$2.3.

\bigskip

\subsection{Preliminaries on endomorphisms and primary decompositions}

Some basic facts in linear algebra concerning linear transformations and the primary decomposition of a vector space
  under a linear transformation are generalized
 to endomorphisms of a free module over a complex Grassmann algebra   and
 to the stalks of a locally free sheaf on a super $C^{\infty}$-scheme $\widehat{X}$.

\bigskip

\subsubsection{Endomorphisms of a free module over a complex Grassmann algebra}

Consider first the case when $X$ is a point and
let
 \begin{itemize}
  \item[\LARGE $\cdot$]
   $\widehat{\Bbb R}_{[s]} :={\Bbb R}[\theta^1,\cdots,\, \theta^s]^{\anticommuting}$;
   $\widehat{\Bbb C}_{[s]}:= \widehat{\Bbb R}_{[s]}\otimes_{\Bbb R}{\Bbb C}$
	 be its complexification;
   $\widehat{\frak m}_{[s]}:=(\theta^1,\,\cdots\,,\,\theta^s)$ the unique maximal ideal
     of $\widehat{\Bbb R}_{[s]}$;
	
  \item[\LARGE $\cdot$]
   $E\simeq {\Bbb C}^{\oplus r}$; \hspace{2em}
    $\widehat{E}:= E\otimes_{\Bbb R}\widehat{\Bbb R}_{[s]}
	     \simeq \widehat{\Bbb C}_{[s]}^{\;\oplus r}$
	 be the free module over $\widehat{\Bbb C}_{[s]}$ of rank $r$; and
	
  \item[\Large $\cdot$]	
    $\End_{\widehat{\Bbb C}_{[s]}}(\widehat{E})\simeq M_{r\times r}(\widehat{\Bbb C}_{[s]})$
	 be the algebra of endomorphisms of $\widehat{E}$ as a right $\widehat{\Bbb C}_{[s]}$-module.
   Here,
    $M_{r\times r}(\widehat{\Bbb C}_{[s]})$
	is the ring of $r\times r$-matrices over $\widehat{\Bbb C}_{[s]}$,
	which acts on $ \widehat{\Bbb C}_{[s]}^{\;\oplus r}$ from the left.
 \end{itemize}	
Recall
 Definition~2.1.1, Notation~2.1.2,
                          % Definition [Azumaya/matrix super $C^{\infty}$-manifold with a fundamental module]
		                  % Notation [basic]				
  the canonical isomorphism
    $\End_{\widehat{\Bbb C}_{[s]}}(\widehat{E})
        \simeq \End_{\Bbb C}(E)\otimes_{\Bbb R}\widehat{\Bbb R}_{[s]}$,     and
  the canonical decomposition
     $\widehat{m}= m_{(0)} + \widehat{m}_{(\ge 1)}$
       for an element $\widehat{m}\in \End_{\widehat{\Bbb C}_{[s]}}(E)$
     with the property that
	   $m_{(0)}\in \End_{\Bbb C}(E)$ and $(\widehat{m}_{(\ge 1)})^{s+1}=0$.
	
Three themes on $\End_{\widehat{\Bbb C}_{[s]}}(\widehat{E})$ that are relevant to our study
 are presented in this subsubsection.
The first follows from a direct computation
 and the second and the third follow from an adaptation of Linear Algebra.

\bigskip

\begin{flushleft}
{\bf The automorphism group $\Aut_{\widehat{\Bbb C}_{[s]}}(\widehat{E})$ of $\widehat{E}$}
\end{flushleft}

\begin{sslemma}{\bf [invertible elements of $\End_{\widehat{\Bbb C}_{[s]}}(\widehat{E})$]}$\;$
 An $\widehat{m}\in \End_{\widehat{\Bbb C}_{[s]}}(\widehat{E})$ is left-invertible
    (resp.\ right-invertible)
  if and only if $m_{(0)}$ is invertible in $\End_{\Bbb C}(E)$.
 When $\widehat{m}$ is (either left- or right-) invertible, its left inverse and its right inverse coincide.
\end{sslemma}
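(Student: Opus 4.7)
The plan is to reduce the statement to the standard fact that invertibility lifts uniquely through a nilpotent two-sided ideal. First I would identify the structural decomposition: the splitting $\widehat{\Bbb C}_{[s]}={\Bbb C}\oplus\widehat{\frak m}_{[s]}^{\,\Bbb C}$ induces
$$
  \End_{\widehat{\Bbb C}_{[s]}}(\widehat{E})\;
   =\; \End_{\Bbb C}(E)\,\oplus\, J\,,\qquad
   J\;:=\; \End_{\Bbb C}(E)\otimes_{\Bbb C}\widehat{\frak m}_{[s]}^{\,\Bbb C}\,,
$$
where $J$ is a two-sided ideal satisfying $J^{s+1}=0$: any product of $s+1$ elements of $J$ can be rearranged, because the scalar-matrix factors commute freely with the $\theta$-monomials, into a matrix of products landing in $(\widehat{\frak m}_{[s]}^{\,\Bbb C})^{s+1}=0$. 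The canonical projection $\pi:\End_{\widehat{\Bbb C}_{[s]}}(\widehat{E})\twoheadrightarrow\End_{\Bbb C}(E)$ sends $\widehat{m}$ to $m_{(0)}$, and the built-in inclusion $\End_{\Bbb C}(E)\hookrightarrow\End_{\widehat{\Bbb C}_{[s]}}(\widehat{E})$ is a ring-theoretic section of $\pi$.

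For the sufficiency direction, assuming $m_{(0)}$ admits an inverse $n_0$ in $\End_{\Bbb C}(E)$, I would factor $\widehat{m}=m_{(0)}\bigl(1+n_0\widehat{m}_{(\ge 1)}\bigr)$ and observe that $\widehat{u}:=n_0\widehat{m}_{(\ge 1)}\in J$ is nilpotent of index at most $s+1$. The finite geometric series
$$
   \widehat{v}\;:=\;\sum_{k=0}^{s}(-\widehat{u})^k
$$
is then a two-sided inverse to $1+\widehat{u}$, so $\widehat{m}$ is two-sided invertible with explicit inverse $\widehat{v}\,n_0$.

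For the necessity direction, if $\widehat{n}\,\widehat{m}=1$ (respectively $\widehat{m}\,\widehat{n}=1$), applying the ring-homomorphism $\pi$ gives $n_{(0)}m_{(0)}=1$ (respectively $m_{(0)}n_{(0)}=1$) in the finite-dimensional ${\Bbb C}$-algebra $\End_{\Bbb C}(E)\simeq M_{r\times r}({\Bbb C})$; a one-sided inverse in such an algebra is automatically two-sided, so $m_{(0)}$ is invertible. Combining the two directions, any one-sided invertibility of $\widehat{m}$ forces $m_{(0)}$ to be invertible, which by the sufficiency argument already equips $\widehat{m}$ with a genuine two-sided inverse; the standard associativity computation $\widehat{n}_L=\widehat{n}_L(\widehat{m}\widehat{n}_R)=(\widehat{n}_L\widehat{m})\widehat{n}_R=\widehat{n}_R$ then yields the coincidence statement. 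No substantive obstacle is anticipated: the whole argument is a direct application of the nilpotent-ideal-lifting principle together with the elementary fact that one-sided inverses in a finite-dimensional algebra over a field are two-sided.
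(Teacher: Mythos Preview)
Your proposal is correct and follows essentially the same approach as the paper: both exploit the decomposition $\widehat{m}=m_{(0)}+\widehat{m}_{(\ge 1)}$ with $\widehat{m}_{(\ge 1)}$ nilpotent and invert via a finite geometric series, the paper writing the resulting inverse explicitly as $m_{(0)}^{-1}-m_{(0)}^{-1}\widehat{m}_{(\ge 1)}m_{(0)}^{-1}\bigl(1-\widehat{m}_{(\ge 1)}m_{(0)}^{-1}+\cdots+(-1)^s(\widehat{m}_{(\ge 1)}m_{(0)}^{-1})^s\bigr)$. Your treatment of the necessity direction (projecting by $\pi$ and using that one-sided inverses in $M_{r\times r}({\Bbb C})$ are two-sided) is more explicit than the paper's, which simply declares the equivalence ``straightforward to check''.
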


\begin{proof}
 From the decomposition $\widehat{m}= m_{(0)}  + \widehat{m}_{(\ge 1)}$
   with $(\widehat{m}_{(\ge 1)})^{s+1}=0$,
 it is straightforward to check that
   $\widehat{m}$ is either left- or right-invertible in $\End_{\widehat{\Bbb C}_{[s]}}(\widehat{E})$
   if and only if $m_{(0)}$ is invertible in $\End_{\Bbb C}(E)$.
 In which case, either inverse is given by
 $$
   \widehat{m}^{-1}\;
    =\; m_{(0)}^{-1}
	       -  m_{(0)}^{-1}\widehat{m}_{(\ge 1)}m_{(0)}^{-1}
	         \left(1-  \widehat{m}_{(\ge 1)}m_{(0)}^{-1}
			              + ( \widehat{m}_{(\ge 1)}m_{(0)}^{-1}   )^2
			   -\,\cdots\,+\, (-1)^s ( \widehat{m}_{(\ge 1)}m_{(0)}^{-1}   )^s
			  \right)\,.
 $$
\end{proof}

\bigskip

\begin{sscorollary} {\bf [$\Aut_{\widehat{\Bbb C}_{[s]}}(\widehat{E})$]}$\;$
 The automorphism group $\Aut_{\widehat{\Bbb C}_{[s]}}(\widehat{E})$
      of $\widehat{E}$ as a right $\widehat{\Bbb C}_{[s]}$-module
  is given by
 $\Aut_{\Bbb C}(E)\oplus \End_{\Bbb C}(E) \otimes_{\Bbb C}\widehat{\frak m}_{[s]} $,
 with the group multiplication induced from its built-in embedding in
 $\End_{\widehat{\Bbb C}_{[s]}}(\widehat{E})$.
\end{sscorollary}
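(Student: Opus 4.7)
The plan is to read off this corollary directly from Lemma~2.2.1.1, since it is essentially a repackaging of the invertibility criterion there. First I would recall the canonical decomposition $\widehat{m} = m_{(0)} + \widehat{m}_{(\ge 1)}$ from Notation~2.1.2, which realizes
$$
  \End_{\widehat{\Bbb C}_{[s]}}(\widehat{E})\;
   =\; \End_{\Bbb C}(E)\,\oplus\, \End_{\Bbb C}(E)\otimes_{\Bbb C}\widehat{\frak m}_{[s]}
$$
as ${\Bbb C}$-modules, with $\widehat{m}_{(\ge 1)}$ nilpotent of order $\le s+1$.

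Next, I would apply Lemma~2.2.1.1: an element $\widehat{m}\in \End_{\widehat{\Bbb C}_{[s]}}(\widehat{E})$ is invertible (equivalently, left- or right-invertible, since the lemma shows these coincide) if and only if its degree-$0$ component $m_{(0)}$ lies in $\Aut_{\Bbb C}(E)$. Intersecting this condition with the direct-sum decomposition above immediately produces the set-theoretic identification
$$
  \Aut_{\widehat{\Bbb C}_{[s]}}(\widehat{E})\;
   =\; \Aut_{\Bbb C}(E)\,\oplus\,\End_{\Bbb C}(E)\otimes_{\Bbb C}\widehat{\frak m}_{[s]}\,.
$$
Finally, the group structure is nothing other than the restriction of the associative algebra multiplication on $\End_{\widehat{\Bbb C}_{[s]}}(\widehat{E})$ to the subset of units; closure under multiplication and inversion is automatic (with the explicit inverse formula already recorded in the proof of Lemma~2.2.1.1).

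There is no real obstacle here --- the corollary is essentially a bookkeeping statement. The one point deserving a clarifying remark is that the symbol ``$\oplus$'' on the right-hand side refers to the additive direct-sum decomposition of the ambient endomorphism algebra inherited from $\widehat{\Bbb C}_{[s]}=\,{\Bbb C}\oplus \widehat{\frak m}_{[s]}$, and is \emph{not} a decomposition of groups: as a group under multiplication, $\Aut_{\widehat{\Bbb C}_{[s]}}(\widehat{E})$ is noncommutative and the nilpotent piece $\End_{\Bbb C}(E)\otimes_{\Bbb C}\widehat{\frak m}_{[s]}$ corresponds under $\widehat{m}\mapsto 1+m_{(0)}^{-1}\widehat{m}_{(\ge 1)}$ to a unipotent normal subgroup, with $\Aut_{\Bbb C}(E)$ sitting as a Levi-type complement. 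I would include a brief sentence to flag this so that the reader is not misled by the additive notation.
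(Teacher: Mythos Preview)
Your proposal is correct and matches the paper's approach exactly: the paper states this as an immediate corollary of Lemma~2.2.1.1 with no separate proof, and your argument spells out precisely the intended one-line deduction from the invertibility criterion there together with the decomposition $\End_{\widehat{\Bbb C}_{[s]}}(\widehat{E}) = \End_{\Bbb C}(E)\oplus \End_{\Bbb C}(E)\otimes_{\Bbb C}\widehat{\frak m}_{[s]}$. Your additional remark clarifying that ``$\oplus$'' is an additive (not group-theoretic) decomposition is a useful gloss that the paper omits.
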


\bigskip

\begin{flushleft}
{\bf Primary decomposition of $\widehat{E}$
          under an $\widehat{m}\in \End_{\widehat{\Bbb C}_{[s]}}(\widehat{E})$}
\end{flushleft}
\begin{sslemma-definition} {\bf [characteristic polynomial]}$\;$
 For $\widehat{m}\in \End_{\widehat{\Bbb C}_{[s]}}(\widehat{E})$,
 let $\chi_{m_{(0)}}:= \determinant (t\cdot \Id_{r\times r} - m_{(0)}) \in {\Bbb C}[t]$
  be the characteristic polynomial of $m_{(0)}$.
 Define
  $$
     \chi_{\widehat{m}}\;=\;  (\chi_{m_{(0)}})^{s+1}\,.
  $$
 Then
   $$
     \chi_{\widehat{m}}(\widehat{m})=0\,.
   $$
 {\rm We shall call $\chi_{\widehat{m}}$ the {\it characteristic polynomial} of $\widehat{m}$}.
\end{sslemma-definition}
  
\begin{proof}
 Since $\chi_{m_{(0)}}(m_{(0)})= 0$,
  one has
   $\; \chi_{m_{(0)}}(\widehat{m})\;
	  =\;   (\chi_{m_{(0)}}(\widehat{m}))_{(\ge 1)}$.
 It follows that 	
  $$
   \chi_{\widehat{m}}(\widehat{m})
    = \left( (\chi_{m_{(0)}}(\widehat{m}))_{(\ge 1)}\right)^{s+1}
	=\; 0\,.
  $$
  
\end{proof}
 
\bigskip

Let
 $\;\chi_{m_{(0)}}\;=\; (t-\lambda_1)^{d_1}\,\cdots\,(t-\lambda_l)^{d_l}$,
   with $\lambda_1,\,\cdots\,,\, \lambda_l$ all distinct.
Then,
 $$
  \chi_{\widehat{m}}\;
   =\; (t-\lambda_1)^{(s+1)d_1}\,\cdots\,(t-\lambda_l)^{(s+1)d_l}\,.	
 $$
Define
 $$
   g_i\;=\;  \frac{\chi_{\widehat{m}}}{(t-\lambda_i)^{(s+1)d_i}}\; \in\; {\Bbb C}[t]\,,
   \hspace{2em}\mbox{for $i=1,\,\ldots\,,\,l$}\,.
 $$
Then,  $g_1,\,\cdots\,,\, g_l$ are relatively prime and hence there exist $h_1,\,\cdots\,,\, h_l \in {\Bbb C}[t]$
 such that
 $$
   h_1g_1\,+\,\cdots\,+\, h_lg_l\;=\; 1\,.
 $$
Let
 $$
   \widehat{e}_i\;
     :=\; (h_ig_i)(\widehat{m})\;\in\; \End_{\widehat{\Bbb C}_{[s]}}(\widehat{E})\,,
   \hspace{2em}\mbox{for $i=1,\,\ldots\,,\, l$}\,.
 $$
Then,
 
\bigskip

\begin{sslemma-definition} {\bf [complete set of orthogonal idempotents associated to $\widehat{m}$]}$\;$
 The collection $\widehat{e}_1,\,\cdots\,,\, \widehat{e}_l$
    form a complete set of orthogonal idempotents of the algebra
  $\End_{\widehat{\Bbb C}_{[s]}}(\widehat{E})$ over $\widehat{\Bbb C}_{[s]}$.
 Namely,
  $$
    \begin{array}{lcl}
	 \mbox{\rm (complete)}
       &&  \widehat{e}_1\,+\,\cdots\,+\, \widehat{e}_l\;=\; \Id_{\widehat{E}}\,; \\[1.2ex]
	 \mbox{\rm (orthogonal)}
	   && \widehat{e}_i\widehat{e}_j\;=\;0
	           \hspace{2em}\mbox{for $i\ne j$, $\;\;i,j=1,\,\ldots\,,\, l$}\,; \\[1.2ex]	
	 \mbox{\rm (idempotent)}
	   && \widehat{e}_i^{\,2}\;=\; \widehat{e}_i \hspace{2em}\mbox{for $i=1,\,\ldots\,l$}\,.
	\end{array}
  $$
 {\rm We shall call $\{\widehat{e}_1,\,\cdots\,,\,\widehat{e}_l\}$
   a {\it complete set of orthogonal idempotents of $\End_{\widehat{\Bbb C}_{[s]}}(\widehat{E})$
   associated to $\widehat{m}\in \End_{\widehat{\Bbb C}_{[s]}}(\widehat{E})$} }.
\end{sslemma-definition}

\begin{proof}
 Completeness and orthogonality follow from
  $$
   \begin{array}{c}
     h_1g_1\,+\,\cdots\,+\, h_lg_l\;=\; 1\,; \\[1.2ex]
	\mbox{$\chi_{\widehat{m}}\;$ divides $\;(h_ig_i)(h_jg_j)$}
	    \hspace{2em}\mbox{for $i\ne j$, $\;\;i,j=1,\,\ldots\,,\, l$}
   \end{array}
  $$
  respectively.
 Idempotency follows from completeness plus orthogonality.
  
\end{proof}

\bigskip

\begin{ssdefinition-lemma} {\bf [primary decomposition of $\widehat{E}$ associated to $\widehat{m}$]}$\;$ {\rm
 The decomposition
   %%---------------------------------------
   % \marginpar{\raggedright\tiny $\bullet$
   %   {\bf Q.}\ {\it Uniqueness of primary decomposition?}\\
   %    We only need the existence. \\
   %    However, geometry suggests that while the complete set may not be unique, the decomposition should be unique.\\
   %	 Think more carefully.}
   %% =======================
  $$
     \widehat{E}\; =\; \widehat{e}_1\widehat{E}+ \,\cdots\,+ \widehat{e}_l\widehat{E}
  $$
  is a direct-sum decomposition, call a {\it primary decomposition} of $\widehat{E}$
  {\it associated to $\widehat{m}\in \End_{\widehat{\Bbb C}_{[s]}}(\widehat{E})$}.
  {\it Each summand $\widehat{e_i}\widehat{E}$, $i=1,\,\ldots\,,l$,  in this decomposition
            is a free $\widehat{\Bbb C}_{[s]}$-module.}
}\end{ssdefinition-lemma}
 
\begin{proof}
 Consider
   the canonical decomposition $\widehat{e}_i=e_{i,(0)}+\widehat{e}_{i,(\ge 1)}$,
   $i=1,\,\ldots\,,l$.
 Then observe that
   $\{e_{1,(0)},\,\cdots\,,\, e_{l,(0)}\}$ is a complete set of orthogonal idempotents of
   $\End_{\Bbb C}(E)$ associated to $m_{(0)}\in \End_{\Bbb C}(E)$.
 Let
   $$
     E\; =\; e_{1,(0)}E + \,\cdots\,+ e_{l,(0)}E
   $$
   be the corresponding primary decomposition of $E$.
 In this case, this is a direct-sum decomposition of a vector space by its sub-vector-spaces.
 
 Let
   $r_i:=\dimm_{\Bbb C}(e_{i,(0)}E)$
   and
   $$
     (\xi_{1,1},\,\cdots\,,\,\xi_{1,r_1},\;
	 \cdots\,,\; \xi_{l,1},\,\cdots\,,\,\xi_{l,r_l})\,,
   $$
   with $r_1+\cdots+r_l=r$, be a basis of $E$ such that
      $(\xi_{i,1},\,\cdots\,,\,\xi_{i,r_i})$ is a basis of $e_{i,(0)}E$,  $i=1,\,\ldots\,,\, l$.
 Then by construction,
   $$
     \widehat{e}_i (\xi_{i,1}),\,\cdots\,,\,   \widehat{e}_i(\xi_{i,r_i})\;
	  \in\;  \widehat{e}_i\widehat{E}\,, \hspace{2em}\mbox{for $\;i=1,\,\ldots\,,\, l$}\,.
   $$
  We claim that
   \begin{itemize}
    \item[\LARGE $\cdot$] {\it
	 $(\widehat{e}_1(\xi_{1,1}),\,\cdots\,,\,   \widehat{e}_1(\xi_{1,r_1}),\;
	            \cdots\,,\;   \widehat{e_l}(\xi_{l,1}),\,\cdots\,,\, \widehat{e}_l(\xi_{l,r_l}))$
	  is a basis of $\widehat{E}$ as a free $\widehat{\Bbb C}_{[s]}$-module.
	  }
   \end{itemize}
  Once this claim is justified, the lemma then follows.
     
 Let
   $E\simeq {\Bbb C}^{\oplus r}$	
        and, hence, $\widehat{E}\simeq \widehat{\Bbb C}_{[s]}^{\;\oplus r}$
    be the isomorphisms specified by the basis\\
       $\;(\xi_{1,1},\,\cdots\,,\,\xi_{1,r_1},\;
	   \cdots\,,\; \xi_{l,1},\,\cdots\,,\,\xi_{l,r_l})$
     of $E$ (as a ${\Bbb C}$-vector space)
     and, hence, of $\widehat{E}$ (as a $\widehat{\Bbb C}_{[s]}$-module).
 Under this isomorphism, 	
  express an element of $\widehat{E}$ as a column vector with entries in $\widehat{\Bbb C}_{[s]}$.
 Then
   \begin{itemize}
    \item[\LARGE $\cdot$]
	  $(\widehat{e}_1(\xi_{1,1}),\,\cdots\,,\,   \widehat{e}_1(\xi_{1,r_1}),\;
	            \cdots\,,\;   \widehat{e_l}(\xi_{l,1}),\,\cdots\,,\, \widehat{e}_l(\xi_{l,r_l}))$
	    is a basis of $\widehat{E}$ as a free $\widehat{\Bbb C}_{[s]}$-module
     if and only if the $r\times r$ matrix
      $[\widehat{e}_1(\xi_{1,1}),\,\cdots\,,\,   \widehat{e}_1(\xi_{1,r_1}),\;
	            \cdots\,,\;   \widehat{e_l}(\xi_{l,1}),\,\cdots\,,\, \widehat{e}_l(\xi_{l,r_l})]$
	  is invertible in $M_{r\times r}(\widehat{\Bbb C}_{[s]})$.
   \end{itemize}
 Since by construction,
   \begin{eqnarray*}
     &&  [\widehat{e}_1(\xi_{1,1}),\,\cdots\,,\,   \widehat{e}_1(\xi_{1,r_1}),\;
	           \cdots\,,\;
			  \widehat{e_l}(\xi_{l,1}),\,\cdots\,,\, \widehat{e}_l(\xi_{l,r_l})]_{(0)}\\[.6ex]
    &=&  [\xi_{1,1},\,\cdots\,,\,\xi_{1,r_1},\;  \cdots\,,\; \xi_{l,1},\,\cdots\,,\,\xi_{l,r_l}]\;
               =\; \Id_{r\times r}\;\in\;  M_{r\times r}({\Bbb C})\,,								
   \end{eqnarray*}
   which is invertible in $M_{r\times r}({\Bbb C})$,
  if follows from Lemma 2.2.1.1 that\\
                      % Lemma [invertible elements of $\End_{\widehat{\Bbb C}_{[s]}}(\widehat{E})$]
   $[\widehat{e}_1(\xi_{1,1}),\,\cdots\,,\,   \widehat{e}_1(\xi_{1,r_1}),\;
	            \cdots\,,\;   \widehat{e_l}(\xi_{l,1}),\,\cdots\,,\, \widehat{e}_l(\xi_{l,r_l})]$
	is invertible in $M_{r\times r}(\widehat{\Bbb C}_{[s]})$.
 This proves the claim	and, hence, the lemma.
    
\end{proof}

\bigskip

\begin{ssremark} $[$block diagonal form of $\widehat{m}\,]\;$ {\rm
 Continuing the above notations.
 By construction,
  $\widehat{e}_i$ is a projection map from $\widehat{E}$ to $\widehat{e}_i\widehat{E}$;
  $\widehat{m}$ leaves each direct summand $\widehat{e}_i\widehat{E}$ invariant;   and
   $$
     \widehat{m}\;=\;   \widehat{m}\widehat{e}_1 +\, \cdots\, + \widehat{m}\widehat{e}_l\,.
   $$
 Thus, with respect to a basis of $\widehat{E}$
   that comes from an ordered collection of bases of
     $\widehat{e}_1\widehat{E},\,\cdots\,,\, \widehat{e}_l\widehat{E}$
	  as free $\widehat{\Bbb C}_{[s]}$-modules,
 $\widehat{m}$ is represented in a block-diagonal form, with one block for each $\lambda_i$, $i=1,\,\ldots\,,\, l$.
}\end{ssremark}

\bigskip

\begin{ssremark}$[$when $\lambda_i$'s are all real$\,]\;$ {\rm
 When all the eigenvalues $\lambda_1,\,\cdots\,,\,\lambda_l$ are real,
 all the polynomials
  $\chi_{m_{(0)}}$, $\chi_{\widehat{m}}$, $g_i$'s, and $h_i$'s
  that appear in the above discussion are in the polynomial ring ${\Bbb R}[t]$ of real coefficients.
}\end{ssremark}

\bigskip

\begin{flushleft}
{\bf Primary decomposition of $\widehat{E}$ under a commuting system of endomorphisms}
\end{flushleft}
When one has
 a commuting system of endomorphisms on $\widehat{E}$ (as a right $\widehat{C}_{[s]}$-module)
 $$
   \widehat{m}_1\,,\; \cdots\,,\; \widehat{m}_n\;
     \in\;  \End_{\widehat{\Bbb C}_{[s]}}(\widehat{E})\,,
	 \hspace{2em}\mbox{with
	   $\; \widehat{m}_i\widehat{m}_j = \widehat{m}_j\widehat{m}_i\;$
       for all $i,\, j$}\,,	
 $$
let
 $$
   \widehat{e}_{i,1}\,,\;\cdots\,,\; \widehat{e}_{i, l_i}
 $$
 be a complete set of orthogonal idempotents of $\End_{\widehat{\Bbb C}_{[s]}}(\widehat{E})$
  associated to $\widehat{m}_i$.
Then
 since  $\widehat{e}_{i,j}$ is a polynomial in $\widehat{m}_i$  and
  $\widehat{m}_i$'s commute with each other,
 $$
    \widehat{e}_{i,j} \widehat{e}_{i^{\prime}, j^{\prime}}\;
	  =\;  \widehat{e}_{i^{\prime},j^{\prime}}\widehat{e}_{i,j}	
 $$
 for all $(i,j),\, (i^{\prime}, j^{\prime})$.
It follows that the expansion
 $$
   (\widehat{e}_{1,1}+ \,\cdots\, + \widehat{e}_{1,l_1})\,\cdots\,
   (\widehat{e}_{n,1}+ \,\cdots\, + \widehat{e}_{n,l_n})\;
   =\;   \widehat{e}_1 +\, \cdots\, + \widehat{e}_l\,,
 $$
 where $\widehat{e}_j= \widehat{e}_{1, \cdot}\,\cdots\,\widehat{e}_{n,\cdot^{\prime}}$
   runs through nonzero summands from the expansion of the product,
  is independent of the order of the factors in the product   and
  gives another complete set of orthogonal idempotents of
     $\End_{\widehat{\Bbb C}_{[s]}}(\widehat{E})$.
It has the following properties:
 \begin{itemize}
  \item[(1)]
   $\widehat{m}_i\widehat{e}_j\;=\; \widehat{e}_j\widehat{m}_i\,$
   for all $i=1,\,\ldots\,,\, n$ and $j=1,\,\cdots\,,\, l$.
   
  \item[(2)]
   The decomposition
      $\widehat{E}= \widehat{e}_1\widehat{E}+ \,\cdots\,+\widehat{e}_l\widehat{E}$
     is a direct-sum decomposition by free $\widehat{\Bbb C}_{[s]}$-modules,
	 with each direct summand $\widehat{e}_i\widehat{E}$ invariant under $\widehat{m}_j$
	 for all $j$.
   Thus, all $\widehat{m}_j$'s are in the block-diagonal form
     with respect to a basis of $\widehat{E}$  that comes from an ordered collection of bases of
	 $\widehat{e}_1\widehat{E}\,, \cdots\,,\,  \widehat{e}_l\widehat{E}$.

  \item[(3)]
   Recall the decomposition $\widehat{m}_j = m_{j,(0)}+ \widehat{m}_{j,(\ge 1)}$.
   Then each block in Property (2) is associated to a unique $n$-tuple
    $(\lambda^1_{ i_1},\,\cdots\,,\,\lambda^n_{i_n})$,
	where $\lambda^j_{k_j }$ is an eigenvalue of $m_{j,(0)}$.
 \end{itemize}

\bigskip

\begin{ssdefinition}{\bf [primary decomposition under a commuting system]}$\;$ {\rm
 The above direct-sum decomposition
   $\; \widehat{E}\; =\;  \widehat{e}_1\widehat{E}+ \,\cdots\,+\widehat{e}_l\widehat{E}\;$
  by free $\widehat{\Bbb C}_{[s]}$-modules
  is called a {\it primary decomposition} of $\widehat{E}$
  {\it under the commuting system of endomorphisms}
   $\widehat{m}_1,\,\cdots\,,\, \widehat{m}_n
      \in \End_{\widehat{\Bbb C}_{[s]}}(\widehat{E})$.
}\end{ssdefinition}

\bigskip

\subsubsection{Generalization of Sec.$\:$2.2.1
         to $C^{\infty}(\End_{\widehat{\Bbb C}_{[s]}}(\widehat{E}))$ for general $X$}

Given $(\widehat{X}_{[s]}^{A\!z},\widehat{E})$	an Azumaya/matrix super $C^{\infty}$-manifold
  with a fundamental module of rank $r$,
let
 $\widehat{m}\in C^{\infty}(\End_{\widehat{\Bbb C}_{[s_1]}}(\widehat{E}))$
 and recall the decomposition
  $\;\widehat{m} = m_{(0)}\,+\, \widehat{m}_{(\ge 1)}\;$ from Notation~2.1.2.
Then it follows from Lemma~2.2.1.1 and Corollary~2.2.1.2 that
                              % Lemma [invertible elements of $\End_{\widehat{\Bbb C}_{[s]}}(\widehat{E})$]
							  % Corollary [$\Aut_{{\Bbb C}_{[s]}}(\widehat{E})$]
      
\bigskip
  
\begin{sslemma}
{\bf [invertible elements of $C^{\infty}(\End_{\widehat{\Bbb C}_{[s]}}(\widehat{E}))$]}$\;$
 An
  $\widehat{m}\in
    C^{\infty}(\End_{\widehat{\Bbb C}_{[s]}}(\widehat{E}))$ is left-invertible
    (resp.\ right-invertible)
  if and only if $m_{(0)}$ is invertible in $C^{\infty}(\End_{\Bbb C}(E))$.
 When $\widehat{m}$ is (either left- or right-) invertible, its left inverse and its right inverse coincide   and
  is given by
  $$
   \widehat{m}^{-1}\;
    =\; m_{(0)}^{-1}
	       -  m_{(0)}^{-1}\widehat{m}_{(\ge 1)}m_{(0)}^{-1}
	         \left(1-  \widehat{m}_{(\ge 1)}m_{(0)}^{-1}
			              + ( \widehat{m}_{(\ge 1)}m_{(0)}^{-1}   )^2
			   -\,\cdots\,+\, (-1)^s ( \widehat{m}_{(\ge 1)}m_{(0)}^{-1}   )^s
			  \right)\,.
   $$
\end{sslemma}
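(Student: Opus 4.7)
The plan is to reduce the statement to its pointwise counterpart (Lemma~2.2.1.1 and Corollary~2.2.1.2) and then upgrade pointwise invertibility to $C^\infty$-regularity of the inverse. Throughout, I will use the canonical decomposition $\widehat{m}=m_{(0)}+\widehat{m}_{(\ge 1)}$, where $m_{(0)}\in C^\infty(\End_{\Bbb C}(E))$ and $\widehat{m}_{(\ge 1)}$ lies in the nilpotent ideal ${\cal O}_X^{A\!z}\otimes_{{\cal O}_X}\bigwedge^{\ge 1}{\cal S}^{\vee}$, whose $(s+1)$-st power vanishes.

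For the ``only if'' direction I would apply the built-in ring-epimorphism $\pi_{X^{\!A\!z}}^{\sharp}: C^{\infty}(\End_{\widehat{\Bbb C}_{[s]}}(\widehat{E}))\twoheadrightarrow C^{\infty}(\End_{\Bbb C}(E))$ induced by the projection $\widehat{\cal O}_X^{A\!z}\rightaarrow {\cal O}_X^{A\!z}$ recalled in Notation~2.1.2. Since this sends $\widehat{m}\mapsto m_{(0)}$ and ring-homomorphisms send units to units, left- or right-invertibility of $\widehat{m}$ forces $m_{(0)}$ to be invertible in $C^{\infty}(\End_{\Bbb C}(E))$.

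For the ``if'' direction, suppose $m_{(0)}$ is invertible in $C^{\infty}(\End_{\Bbb C}(E))$. The first sub-step is to observe that pointwise matrix inversion is a $C^\infty$-operation (by Cramer's rule, or equivalently by the $C^\infty$-ring property applied to $\det\ne 0$), so $m_{(0)}^{-1}\in C^{\infty}(\End_{\Bbb C}(E))$. The second sub-step is to take the element
\[
\widehat{n}\;:=\;m_{(0)}^{-1}\,-\,m_{(0)}^{-1}\widehat{m}_{(\ge 1)}m_{(0)}^{-1}\sum_{k=0}^{s}(-1)^k\bigl(\widehat{m}_{(\ge 1)}m_{(0)}^{-1}\bigr)^k,
\]
which lies in $C^{\infty}(\End_{\widehat{\Bbb C}_{[s]}}(\widehat{E}))$ since every term is a polynomial expression in the $C^\infty$-sections $m_{(0)}^{-1}$ and $\widehat{m}_{(\ge 1)}$. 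A direct computation using $(\widehat{m}_{(\ge 1)}m_{(0)}^{-1})^{s+1}=0$ (forced by $(\widehat{m}_{(\ge 1)})^{s+1}=0$ together with the nilpotent ideal absorbing products with $m_{(0)}^{-1}$) verifies $\widehat{m}\widehat{n}=\widehat{n}\widehat{m}=\Id_{\widehat{E}}$, exactly as in the pointwise formula of Lemma~2.2.1.1.

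The coincidence of one-sided inverses is automatic: if $\widehat{n}_L\widehat{m}=\Id=\widehat{m}\widehat{n}_R$, then $\widehat{n}_L=\widehat{n}_L\widehat{m}\widehat{n}_R=\widehat{n}_R$. The only non-routine step is therefore the $C^\infty$-regularity of $m_{(0)}^{-1}$; once this is in hand, the rest is a finite polynomial manipulation that is valid fibrewise by Lemma~2.2.1.1 and therefore globally over $X$. I do not expect a genuine obstacle here, because the nilpotent structure of the odd part trivialises the would-be analytic issue to a closed-form polynomial.
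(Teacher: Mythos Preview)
Your proposal is correct and follows essentially the same approach as the paper, which simply asserts that the lemma ``follows from Lemma~2.2.1.1 and Corollary~2.2.1.2'' without spelling out details. You have supplied the points the paper leaves implicit: that the projection $\widehat{m}\mapsto m_{(0)}$ is a ring-homomorphism (for the ``only if'' direction), that $m_{(0)}^{-1}$ is smooth when $m_{(0)}$ is pointwise invertible, and that the explicit inverse is a polynomial in smooth sections and hence smooth.
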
		
		
\bigskip

\begin{sscorollary}
{\bf [$C^{\infty}(\Aut_{\widehat{\Bbb C}_{[s]}}(\widehat{E}))$]}$\;$
 The automorphism group $C^{\infty}(\Aut_{\widehat{\Bbb C}_{[s]}}(\widehat{E}))$
      of $\widehat{E}$ as a right $\widehat{\Bbb C}_{[s]}$-module
  is given by
 $C^{\infty}
   (\Aut_{\Bbb C}(E)
         \oplus \End_{\Bbb C}(E) \otimes_{\Bbb R}\bigwedge^{\ge 1}_{\Bbb R}S^{\vee})$,
 with the group multiplication induced from its built-in embedding in
 $C^{\infty}(\End_{\widehat{\Bbb C}_{[s]}}(\widehat{E}))$.
\end{sscorollary}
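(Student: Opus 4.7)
The plan is to deduce the statement directly from Lemma 2.2.2.1 together with the built-in splitting
$$
  C^{\infty}(\End_{\widehat{\Bbb C}_{[s]}}(\widehat{E}))\;
   =\; C^{\infty}(\End_{\Bbb C}(E))\,\oplus\,
          C^{\infty}(\End_{\Bbb C}(E)\otimes_{\Bbb R}\mbox{$\bigwedge$}^{\ge 1}_{\Bbb R}S^{\vee})
$$
coming from Notation 2.1.2. First I would observe that an element of the left-hand side of the claimed equality is by definition a smooth section $\widehat{m}$ of $\End_{\widehat{\Bbb C}_{[s]}}(\widehat{E})$ together with a two-sided inverse $\widehat{m}^{-1}$ of the same kind. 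By Lemma 2.2.2.1, the existence of such an inverse is equivalent to the invertibility of the even component $m_{(0)}\in C^{\infty}(\End_{\Bbb C}(E))$, that is, to $m_{(0)}\in C^{\infty}(\Aut_{\Bbb C}(E))$.

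Next I would note that the nilpotent part $\widehat{m}_{(\ge 1)}$ is unconstrained: it can be any element of $C^{\infty}(\End_{\Bbb C}(E)\otimes_{\Bbb R}\bigwedge^{\ge 1}_{\Bbb R}S^{\vee})$, since the explicit inverse formula from Lemma 2.2.2.1 produces a smooth section of $\End_{\widehat{\Bbb C}_{[s]}}(\widehat{E})$ for any such choice (the series terminates at degree $s$, hence is a finite polynomial expression in smooth sections). This identifies the underlying set of $C^{\infty}(\Aut_{\widehat{\Bbb C}_{[s]}}(\widehat{E}))$ with the set underlying $C^{\infty}(\Aut_{\Bbb C}(E)\oplus\End_{\Bbb C}(E)\otimes_{\Bbb R}\bigwedge^{\ge 1}_{\Bbb R}S^{\vee})$.

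Finally I would verify the group-structure statement. Since both sides sit inside the associative algebra $C^{\infty}(\End_{\widehat{\Bbb C}_{[s]}}(\widehat{E}))$ as the subsets of invertible elements with respect to composition, and since this embedding is the same on both descriptions, the induced group laws automatically agree. I would just have to remark that the right-hand side is closed under composition and inversion: closure under composition is the statement that if $m_{(0)}$ and $m^{\prime}_{(0)}$ are invertible then so is $(m_{(0)} + \widehat{m}_{(\ge 1)})(m^{\prime}_{(0)} + \widehat{m}^{\prime}_{(\ge 1)})$, which is immediate because its even component is $m_{(0)}m^{\prime}_{(0)}\in C^{\infty}(\Aut_{\Bbb C}(E))$; closure under inversion is Lemma 2.2.2.1 itself.

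I do not anticipate a real obstacle: the content of this corollary is entirely extracted from Lemma 2.2.2.1, and the only mild care needed is in verifying that the explicit inverse formula, being polynomial in $\widehat{m}_{(\ge 1)}$ and $m_{(0)}^{-1}$, produces a genuine smooth global section (rather than merely a pointwise inverse), which is clear because smoothness is preserved by finite sums, products, and inversion in $C^{\infty}(\Aut_{\Bbb C}(E))$.
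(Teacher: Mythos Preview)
Your proposal is correct and follows the same approach as the paper, which states the corollary as an immediate consequence of Lemma~2.2.2.1 (itself derived from the pointwise Lemma~2.2.1.1 and Corollary~2.2.1.2) without supplying further argument. One minor terminological note: what you call the ``even component'' $m_{(0)}$ is in the paper's notation the degree-$0$ component with respect to the $\bigwedge^{\bullet}S^{\vee}$-grading rather than the ${\Bbb Z}/2$-even part, but your argument goes through unchanged with this correction.
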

		
\bigskip
  
Let
  $$
    \chi_{m_{(0)}}\;:=\; \determinant(t\cdot \Id_{r\times r}- m_{(0)}) \;
	 \in\; C^{\infty}(X)^{\Bbb C}[t]
  $$
 be the characteristic polynomial of $m_{(0)}$.
Define
  $$
    \chi_{\widehat{m}}\;=\;  (\chi_{m_{(0)}})^{s+1} \; \in\; C^{\infty}(X)^{\Bbb C}[t]\,.
  $$
Then the identity $\chi_{\widehat{m}}(\widehat{m})=0$ still holds.
We'll call $\chi_{\widehat{m}}$
 the {\it characteristic polynomial} (with coefficients in $C^{\infty}(X)^{\Bbb C}$) of $\widehat{m}$.
  
For $p\in X$,
 suppose that the characteristic polynomial
  $$
    \chi_{m_{(0)}}|_p\;
	 :=\;  \chi_{m_{(0)}(p)}\;
	  =\;  \determinant(t\cdot\Id_{r\times r }- m_{(0)}(p))\;
	  =\; (t-\lambda_1)^{d_1}\,\cdots\,(t-\lambda_l)^{d_l}\;\;\in\; {\Bbb C}[t]
  $$
    of $m_{(0)}(p)$
  has $l$-many distinct roots.
Then
  there exist polynomials $f_1,\,\cdots\,,\, f_l \in C^{\infty}(X)_{(p)}[t]$ in $t$
   with coefficients germs of  complex-valued smooth functions at $p$
 such that
    \begin{itemize}
	  \item[(1)]
	   For any $p^{\prime}\in X$ in a small enough neighborhood of $p$, 	
  	   the sets of roots, one for each polynomials
	    $f_1|_{p^{\prime}},\,\cdots\,, f_l|_{p^{\prime}}\in {\Bbb C}[t]$,
	    are disjoint from each other in ${\Bbb C}$.

      \item[(2)]		
	   $f_i|_p = (t-\lambda_i)^{(s+1)d_i}$, for $i=1,\,\ldots\,, l$.

      \item[(3)]
       $\chi_{\widehat{m}}$ factors into a product
         $\chi_{\widehat{m}}= f_1\,\cdots\, f_l$ as germs at $p$.
    \end{itemize}
Define
 $$
    g_i\;=\; \frac{\chi_{\widehat{m}}}{f_i}\;
	\in\; C^{\infty}(X)^{\Bbb C}_{(p)}[t]\,,
	\hspace{2em}\mbox{for $i=1,\,\ldots\,, l$}\,.	
 $$
Then, $g_1,\,\cdots\,,\, g_l$ are relatively prime in $C^{\infty}(X)^{\Bbb C}_{(p)}$.
Since the Euclid algorithm is an algebraic procedure and, by construction, the coefficient of the top-degree term
 of $g_i$ is invertible in $C^{\infty}(X)^{\Bbb C}_{(p)}$ for all $i$,
 the Euclid algorithm with respect to the $t$-degree remains to work for $g_1,\,\cdots\,, g_l$.
Thus,  there exist $h_1,\,\cdots\,,\, h_l \in C^{\infty}(X)^{\Bbb C}_{(p)}[t]$
 such that
 $$
   h_1g_1\,+\,\cdots\,+\, h_lg_l\;=\; 1\,.
 $$
Let
 $$
   \widehat{e}_i\;
     :=\; (h_ig_i)(\widehat{m})\;
	          \in\; C^{\infty}(\End_{\widehat{\Bbb C}_{[s]}}(\widehat{E}))_{(p)}\,,
   \hspace{2em}\mbox{for $i=1,\,\ldots\,,\, l$}\,.
 $$
Then,  the same argument as in the proof of Lemma/Definition~2.2.1.4 gives
                                                                         % Lemma/Definition [complete set of orthogonal idempotents
																		 %                                  associated to $\widehat{m}$]
 
\bigskip

\begin{sslemma-definition} {\bf [complete set of orthogonal idempotents associated to $\widehat{m}$]}$\;$
 The collection $\widehat{e}_1,\,\cdots\,,\, \widehat{e}_l$
    form a complete set of orthogonal idempotents of
  $C^{\infty}(\End_{\widehat{\Bbb C}_{[s]}}(\widehat{E}))_{(p)}$.
 Namely,
  $$
    \begin{array}{lcl}
	 \mbox{\rm (complete)}
       &&  \widehat{e}_1\,+\,\cdots\,+\, \widehat{e}_l\;=\; \Id_{\widehat{E}_{(p)}}\,; \\[1.2ex]
	 \mbox{\rm (orthogonal)}
	   && \widehat{e}_i\widehat{e}_j\;=\;0
	           \hspace{2em}\mbox{for $i\ne j$, $\;\;i,j=1,\,\ldots\,,\, l$}\,; \\[1.2ex]	
	 \mbox{\rm (idempotent)}
	   && \widehat{e}_i^{\,2}\;=\; \widehat{e}_i \hspace{2em}\mbox{for $i=1,\,\ldots\,l$}\,.
	\end{array}
  $$
 {\rm We shall call $\{\widehat{e}_1,\,\cdots\,,\,\widehat{e}_l\}$
   a {\it complete set of orthogonal idempotents of
    $C^{\infty}(\End_{\widehat{\Bbb C}_{[s]}}(\widehat{E}))_{(p)}$
   associated to the germ of
    $\widehat{m}\in C^{\infty}(\End_{\widehat{\Bbb C}_{[s]}}(\widehat{E}))$ at $p$} }.
\end{sslemma-definition}

\bigskip

Once having this complete system of orthogonal idempotents,
 all the constructions in Sec.$\:$2.2.1 go through as the level of germs at $p$.
Which we summarize below.

\bigskip

\begin{ssdefinition} {\bf [primary decomposition of $\widehat{E}$ associated to $\widehat{m}$]}$\;$ {\rm
 The decomposition
  $$
     \widehat{E}|_U\; =\; \widehat{e}_1\widehat{E}|_U+ \,\cdots\,+ \widehat{e}_l\widehat{E}|_U
  $$
  over a small neighborhood $U$ of $p\in X$
    on which $\widehat{E}|_U$ is trivial and $\widehat{e}_i$'s are all defined and nowhere zero
  is a direct-sum decomposition, call a {\it primary decomposition} of $\widehat{E}$
  {\it associated to $\widehat{m}\in \End_{\widehat{\Bbb C}_{[s]}}(\widehat{E})$
  locally around $p$}.
 Note that
   this is a decomposition of $\widehat{E}|_U$ by free $\widehat{\cal O}_U$-modules.
}\end{ssdefinition}

\bigskip

\begin{ssremark} $[$block diagonal form of $\widehat{m}\,]\;$ {\rm
 Continuing the above notations.
 By construction,
  $\widehat{e}_i$ is a projection map from $\widehat{E}|_U$ to $\widehat{e}_i\widehat{E}|_U$;
  $\widehat{m}|_U$ leaves each direct summand $\widehat{e}_i\widehat{E}|_U$ invariant;   and
   $$
     \widehat{m}|_U\;
	  =\;   \widehat{m}|_U\widehat{e}_1 +\, \cdots\, + \widehat{m}|_U\widehat{e}_l\,.
   $$
 Thus, with respect to a basis of $\widehat{E}|_U$
   that comes from an ordered collection of bases of
     $\widehat{e}_1\widehat{E}|_U$, $\cdots\,$, $\widehat{e}_l\widehat{E}|_U$,
 $\widehat{m}|_U$ is represented in a block-diagonal form, with one block for each distinct
   eigenvalue $\lambda_i$ of $m_{(0)}(p)$, $i=1,\,\ldots\,,\, l$.
}\end{ssremark}

\bigskip

\begin{ssremark}$[$the case all eigenvalues real$\,]\;$ {\rm
 If for all $p^{\prime} \in X$ the eigenvalues of  $m_{(0)}(p^{\prime})$ are real,
 then
  all the polynomials ($\chi_{m_{(0)}}$, $\chi_{\widehat{m}}$, $g_i$'s, and $h_i$'s) in $t$
  that appear in the above discussion are in the polynomial ring $C^{\infty}(X)_{(p)}[t]$.
}\end{ssremark}
 
\bigskip
 
\begin{sslemma-definition}{\bf [primary decomposition under a commuting system]}$\;$
 Let
 $$
   \widehat{m}_1\,,\; \cdots\,,\; \widehat{m}_n\;
     \in\;   C^{\infty}(\End_{\widehat{\Bbb C}_{[s]}}(\widehat{E}))\,,
	 \hspace{2em}\mbox{with
	   $\; \widehat{m}_i\widehat{m}_j = \widehat{m}_j\widehat{m}_i\;$
       for all $i,\, j$}
 $$
 be a commuting system of endomorphisms on $\widehat{E}$
  (as a right ${\cal O}_{\widehat{X}_{[s]}}$-module).
 Then, for $p\in X$, there exists a neighborhood $U$ of $p$
  such that
   there exists a complete set of orthogonal idempotents
   $\widehat{e}_1,\,\cdots\,,\, \widehat{e}_l$
   of $C^{\infty}(\End_{\widehat{\Bbb C}_{[s]}}(\widehat{E}|_U))$.
   with the following properties: (By shrinking $U$ if necessary, assume that $\widehat{E}|_U$ is trivial.)
   \begin{itemize}
    \item[(1)]
     $\widehat{m}_i|_U\widehat{e}_j\;=\; \widehat{e}_j\widehat{m}_i|_U\,$
     for all $i=1,\,\ldots\,,\, n$ and $j=1,\,\cdots\,,\, l$.
   
    \item[(2)]
     The decomposition
	    $\widehat{E}|_U= \widehat{e}_1\widehat{E}|_U+ \,\cdots\,+\widehat{e}_l\widehat{E}|_U$
       is a direct-sum decomposition by free $\widehat{\cal O}_U$-modules,
	   with each direct summand $\widehat{e}_i\widehat{E}|_U$ invariant under $\widehat{m}_j$
	   for all $j$.
     Thus, all $\widehat{m}_j|_U$'s are in the block-diagonal form
       with respect to a basis of $\widehat{E}|_U$  that comes from an ordered collection of bases of
	   $\widehat{e}_1\widehat{E}|_U\,, \cdots\,,\,  \widehat{e}_l\widehat{E}|_U$.

    \item[(3)]
     Recall the decomposition $\widehat{m}_j = m_{j,(0)}+ \widehat{m}_{j,(\ge 1)}$.
     Then each block in Property (2) is associated to a unique $n$-tuple
      $(\lambda^1_{ i_1},\,\cdots\,,\,\lambda^n_{i_n})$,
 	  where $\lambda^j_{k_j }$ is an eigenvalue of $m_{j,(0)}$.
   \end{itemize}
 
 {\rm The above decomposition
   $\; \widehat{E}|_U\;
     =\;  \widehat{e}_1\widehat{E}|_U+ \,\cdots\,+\widehat{e}_l\widehat{E}|_U\;$
    is called a {\it primary decomposition} of $\widehat{E}$
  {\it over a small enough neighborhood of $p\in X$ under the commuting system of endomorphisms}
   $\widehat{m}_1,\,\cdots\,,\, \widehat{m}_n
      \in C^{\infty}(\End_{\widehat{\Bbb C}_{[s]}}(\widehat{E}))$.}
\end{sslemma-definition}

\bigskip	

\subsection{$C^{\infty}$-maps from an Azumaya/matrix superpoint to a real supermanifold}

We prove in this subsection Theorem~2.1.5 and Theorem~2.1.8
                        % Theorem [every ring-homomorphism $C^{\infty}$-admissible]
						% Theorem  [ring-homomorphism to
	                    %                    $C^{\infty}(\End_{\widehat{\Bbb C}_{[s_1]}}(\widehat{E}))$]
 when
   $X=p$ is a point and
   $\widehat{X}=\widehat{p}_{[s_1]}:= \Spec(\widehat{\Bbb C}_{[s_1]}) $
            is a superpoint.

\bigskip
 
\subsubsection{Proof of Theorem~2.1.5 when $X$ is a point}
                    % Theorem [every ring-homomorphism in question $C^{\infty}$-admissible]
 
For
   $X=p$ a point and
   $\widehat{X}=\widehat{p}_{[s_1]}:= \Spec(\widehat{\Bbb C}_{[s_1]}) $ a superpoint,
let
  $$
    \widehat{\varphi}^{\sharp}\;:\;
	  C^{\infty}(\widehat{Y}_{[s]})\; \longrightarrow\;
	  \End_{\widehat{\Bbb C}_{[s_1]}}(\widehat{E})
	    \simeq M_{r\times r}(\widehat{\Bbb C}_{[s_1]})       	
  $$
 be a ring-homomorphism over ${\Bbb R}\hookrightarrow{\Bbb C}$.		
Since here $\widehat{\Bbb C}_{[s_1]}$ acts on $\widehat{E}$ from the right,
 $\widehat{\Bbb C}_{[s_1]}$  commutes with
   $\widehat{\varphi}^{\sharp}(C^{\infty}(\widehat{Y}_{[s_2]}))$
   in $\End_{\widehat{\Bbb C}_{[s_1]}}(\widehat{E})$.
Since
 $C^{\infty}(\widehat{p\times Y}_{[s_1,s_2]}) $
       ($\simeq C^{\infty}(\widehat{Y}_{[s_1,s_2]})$ canonically)
	 is a split-exact, locally free extension of $C^{\infty}(\widehat{Y}_{[s_2]})$ by anticommuting variables,
 $\widehat{\varphi}^{\sharp}$	extends to a commutative diagram of ring-homomorphisms
     $$
		 \xymatrix{
	      & \End_{\widehat{\Bbb C}_{[s_1]}}(\widehat{E})
			     &&& C^{\infty}(\widehat{Y}_{[s_2]}) \ar[lll]_-{\widehat{\varphi}^{\sharp}}
			                                      \ar@{_{(}->}^-{pr_{\widehat{Y}_{[s_2]}}^{\sharp}}[d]   \\			
		  &   \rule{0ex}{1.2em}\;\;\widehat{\Bbb C}_{[s_1]}\;\;
			                                                    \ar@{^{(}->}[u]
			                                                    \ar@{^{(}->}[rrr]_-{pr_{\widehat{p}_{[s_1]}}^{\sharp}}
				 &&& C^{\infty}(\widehat{p\times Y}_{[s_1,s_2]})
				            \ar[lllu]_-{\tilde{\widehat{\varphi}}^{\sharp}}		    &.
		}
	 $$
Since	
   $A_{\widehat{\varphi},0}
      := \tilde{\widehat{\varphi}}^{\sharp}
	            (C^{\infty}(\widehat{p\times Y}_{[s_1,s_2]})_{\even})$
   in the current situation is a finite-dimensional ${\Bbb R}$-algebra,
 the underlying diagram of ring-homomorphisms
	   $$
		 \xymatrix{
		  & A_{\widehat{\varphi},0}
			     &&& C^{\infty}(\widehat{Y}_{[s_2]})_{\even}
				              \ar[lll]_-{\widehat{\varphi}^{\sharp}|_{\tinyeven}}
			                  \ar@{_{(}->}^-{pr_{\widehat{Y}_{[s_2]}}^{\sharp}|_{\tinyeven}}[d]   \\			
		  &   \rule{0ex}{1.2em}\;\;\widehat{\Bbb C}_{[s_1], \even}\;\;
			                  \ar@{^{(}->}[u]
			                  \ar@{^{(}->}[rrr]_-{pr_{\widehat{p}_{[s_1]}}^{\sharp}|_{\tinyeven}}
				 &&& C^{\infty}(\widehat{p\times Y}_{[s_1,s_2]})_{\even}
				              \ar@{->>}[lllu]_-{\tilde{\widehat{\varphi}}^{\sharp}|_{\tinyeven}}		 &.
		}
	   $$
	is automatically a diagram is $C^{\infty}$-ring-homomorphism.
This proves Theorem~2.1.5 in the case when $X$ is a point.	
 	              % Theorem [every ring-homomorphism in question $C^{\infty}$-admissible]

\bigskip

\subsubsection{Proof of Theorem~2.1.8 when $X$ is a point}
                    % Theorem  [ring-homomorphism to
	                %                    $C^{\infty}(\End_{\widehat{\Bbb C}_{[s_1]}}(\widehat{E}))$]

For
   $X=p$ a point and
   $\widehat{X}=\widehat{p}_{[s_1]}:= \Spec(\widehat{\Bbb C}_{[s_1]}) $ a superpoint,
recall the global coordinates $(y^1,\,\cdots\,, y^n\,|\, \vartheta^1,\,\cdots\,,\,\vartheta^{s_2} )$
   for the super $C^{\infty}$-manifold ${\Bbb R}^{n|s_2}$   and
let
   $$
    \widehat{\eta}\;:\;
	  \left\{
	  \begin{array}{lll}
	    y^i  & \longmapsto
	           & \widehat{m}_i\,=\, m_{i,(0)}+ \widehat{m}_{i,(\ge 1)} \\
        \vartheta^{l^{\prime}}  & \longmapsto
               & \Theta_{l^{\prime}}		
	  \end{array}\right.\;
	         \in\,  \End_{\widehat{\Bbb C}_{[s_1]}}(\widehat{E})\,
			              \simeq\, M_{r\times r}(\widehat{\Bbb C}_{[s_1]}) \,,
  $$			
   for $i =1,\,\ldots\,,n\,,\; l^{\prime}=1,\,\ldots\,,\, s_2$,
 be an assignment  such that
  \begin{itemize}
   \item[(1)]
     $\widehat{m}_i\widehat{m}_j\;=\;\widehat{m}_j\widehat{m}_i\,$,
	 $\;\widehat{m}_i\Theta_l=\Theta_l\widehat{m}_i\,$,
	 $\;\Theta_l\Theta_{l^{\prime}}= - \Theta_{l^{\prime}}\Theta_l\,$,
	 for all $i,\, j,\, l,\, l^{\prime}$;
	
   \item[(2)]
	 the eigenvalues of $m_{i,(0)}\in \End_{\Bbb C}(E)\simeq M_{r\times r}({\Bbb C})$
	  are all real.
 \end{itemize}
Recall Sec.$\:$2.2.1 and
let
 $$
   \widehat{e}_1\,,\; \cdots\,,\; \widehat{e}_{k_0}
 $$
  be a complete set of orthogonal idempotents in $\End_{\widehat{\Bbb C}_{[s_1]}}(\widehat{E})$
  associated to the commuting system $\widehat{m}_1,\,\cdots\,,\, \widehat{m}_n$,
 and
 $$
   \widehat{E}\;=\;  \widehat{V}_1\oplus \,\cdots\, \oplus \widehat{V}_{k_0}\;
    :=\;  \widehat{e}_1\widehat{E}\,+\, \cdots\,+\, \widehat{e}_{k_0} \widehat{E}_{k_0}
 $$
 be the corresponding primary decomposition of $\widehat{E}$ by free $\widehat{\Bbb C}_{[s_1]}$-modules.
Associated to each $\widehat{V}_j$ is a
 $$
   q_j\; :=\; (\lambda_j^1, \,\cdots\,,\, \lambda_j^n)\in {\Bbb R}^n\,,
 $$
 where $\lambda_j^i$ is an eigenvalue of $\widehat{m}_i$.

Denote the tuple $(y^1,\,\cdots\,, y^n)$ by $\boldy$.
Let $\widehat{\eta}|_{\scriptsizeboldy}$ be the assignment
 $$
   \widehat{\eta}|_{\scriptsizeboldy}\;:\;
     y^i\;\longmapsto\; \widehat{m}_i\,.
 $$
Then
 $\widehat{\eta}|_{\scriptsizeboldy}$ extends uniquely to a ring-homomorphism
 $$
   \varphi_{\widehat{\eta}|_{\tinyboldy}}^{\sharp}\;:\;
     C^{\infty}({\Bbb R}^n)\;
	 \longrightarrow\; \End_{\widehat{\Bbb C}_{[s_1]}}(\widehat{E})
 $$
 from the composition of ring-homomorphisms
    $$
	  \xymatrix{
	     \;C^{\infty}({\Bbb R}^n)\;
		       \ar@{.>}[rr]^-{\varphi_{\widehat{\eta}|_{\tinyboldy}}^{\sharp}}
	           \ar[d]_-{\times_{j=1}^{k_0}T_{q_j}^{((r-1)(s_1+1)-1)} }
	       &&
		         \; \End_{\widehat{\Bbb C}_{[s_1]}}(\widehat{V}_1)
				    \times \,\cdots\,\;\times \End_{\widehat{\Bbb C}_{[s_1]}}(\widehat{V}_{k_0})\;
		           \subset\; \End_{\widehat{\Bbb C}_{[s_1]}}(\widehat{E})    \\
         \times_{j=1}^{k_0}
	          \frac{{\Bbb R}[y^1-\lambda_j^1,\,\cdots\,, y^n-\lambda_j^n] }
	                       {(y^1-\lambda_j^1,\,\cdots\,,\, y^n-\lambda_j^n)^{(r-1)(s_1+1)}}
	 	    	\ar[rru]_-{\underline{\varphi}^{\sharp}\;
				                       =\; (\, \underline{\varphi}^{\sharp}_1,\,\cdots\,,\,
        									       \underline{\varphi}^{\sharp}_{k_0}\,) }    &&&,
	  }
    $$	
  where
   \begin{itemize}
    \item[\LARGE $\cdot$]
	  $T_{q_j}^{((r-1)(s_1+1)-1)}$ is the map
		 `{\sl taking Taylor polynomial (of elements in $C^{\infty}({\Bbb R}^n)$) at $q_j$
		      with respect to coordinate $(y^1,\,\cdots\,,\, y^n)$ up to and including degree $(r-1)(s_1+1)-1$}', and
	
    \item[\LARGE $\cdot$]
      $$
	    \underline{\varphi}^{\sharp}_j\; :\;  		
		  	   \frac{{\Bbb R}[y^1-\lambda_j^1,\,\cdots\,, y^n-\lambda_j^n]}
	                    {(y^1-\lambda_j^1,\,\cdots\,,\, y^n-\lambda_j^n)^{(r-1)(s_1+1)}}\;
          \longrightarrow\;  \End_{\widehat{\Bbb C}_{[s_1]}}(\widehat{V}_j)\,,							
	  $$ 	
	  is the ${\Bbb R}$-algebra homomorphism generated by sending
	   $y^i\mapsto  \widehat{m}_i\widehat{e_j}$, $i=1,\,\ldots\,,\,n$.
   \end{itemize}
 
Equip
    $\frac{{\Bbb R}[y^1-\lambda_j^1,\,\cdots\,, y^n-\lambda_j^n]}
	                    {(y^1-\lambda_j^1,\,\cdots\,,\, y^n-\lambda_j^n)^{(r-1)(s_1+1)}}$   and
    $\Image(\underline{\varphi}^{\sharp}_j)$						
 with the canonical $C^{\infty}$-ring structure.
 Then all of
    $T_{q_j}^{((r-1)(s_1+1)-1)}$  and
	$\underline{\varphi}^{\sharp}_j$, $j=1,\,\ldots\,,\,k_0$,
  become $C^{\infty}$-ring-homomorphisms.	
Let $A_{\varphi_{\eta}}:= \Image (\varphi_{\widehat{\eta}|_{\tinyboldy}}^{\sharp})$
   be equipped with the canonical $C^{\infty}$-ring structure.
Then
  the ring-homomorphism
  $$
     \varphi^{\sharp}_{\widehat{\eta}|_{\tinyboldy}}\;:\;
	   C^{\infty}({\Bbb R}^n)\; \longrightarrow\; A_{\varphi_{\widehat{\eta}|_{\tinyboldy}}}
  $$
 is also a $C^{\infty}$-ring-homomorphism.

Once $\varphi_{\widehat{\eta}|_{\tinyboldy}}^{\sharp}$ is constructed,
 its unique extension to the following diagram of ring-homomorphisms under the additional assignment
 $$
  \begin{array}{cll}
   \,\theta^l\;\,\longmapsto\;\, \theta^l\,\,\,,                                                               && l\; = 1\,,\,\ldots\,,\, s_1\,,  \\
   \vartheta^{l^{\prime}}\;\longmapsto \; \Theta_{l^{\prime}}\,,     && l^{\prime}=1\,,\,\ldots\,,\, s_2\,,
   \end{array}
 $$
 is immediate and unique:
 $$
   \xymatrix{
      \;C^{\infty}(\widehat{\Bbb R}^n_{[s_1,s_2]})
	        := C^{\infty}({\Bbb R})
			        [\theta^1,\,\cdots\,,\theta^{s_1}\,;\,
			          \vartheta^1,\,\cdots\,, \vartheta^{s_2}]^{\anticommuting}\;
          \ar[ddrrr]^-{\tilde{\widehat{\varphi}}_{\widehat{\eta}}^{\sharp}}	  \\
      \;\rule{0ex}{1.2em}C^{\infty}(\widehat{\Bbb R}^n_{[s_2]})
	    := C^{\infty}({\Bbb R}^n)[\vartheta^1,\,\cdots\,, \vartheta^{s_2}]^{\anticommuting}\;
               \ar[drrr]|-{\;\;\widehat{\varphi}_{\widehat{\eta}}^{\sharp}\;\;}	
               \ar@{^{(}->}[u]		  \\
	  \;C^{\infty}({\Bbb R}^n)\rule{0ex}{1.2em}\;
		       \ar[rrr]_-{\varphi_{\widehat{\eta}|_{\tinyboldy}}^{\sharp}}
			   \ar@{^{(}->}[u]
	       &&&  \; \End_{\widehat{\Bbb C}_{[s_1]}}(\widehat{E})\;    \\				
	}
 $$	
since
  both ring-extensions
   $$
     C^{\infty}({\Bbb R}^n)\;\hookrightarrow\; C^{\infty}(\widehat{\Bbb R}_{[s_2]})
	  \hspace{2em}\mbox{and}\hspace{2em}
	 C^{\infty}(\widehat{\Bbb R}^n_{[s_2]})\;
	   \hookrightarrow\; C^{\infty}(\widehat{\Bbb R}_{[s_1,s_2]})
   $$
  are split-exact, algebraic type, free extension of rings, though by additional anticommuting variables.
 
This proves Theorem~2.1.8 when $X$ is a point.
                 % Theorem  [ring-homomorphism to
	             %                    $C^{\infty}(\End_{\widehat{\Bbb C}_{[s_1]}}(\widehat{E}))$]

\bigskip
				
See {\sc Figure}~2-3-2-1 for the geometry behind;
cf.\ [L-Y2: {\sc Figure}~3-4-1] (D(11.1)).
%
% \marginpar{\raggedright\tiny $\bullet$
%       {\sc Figure}~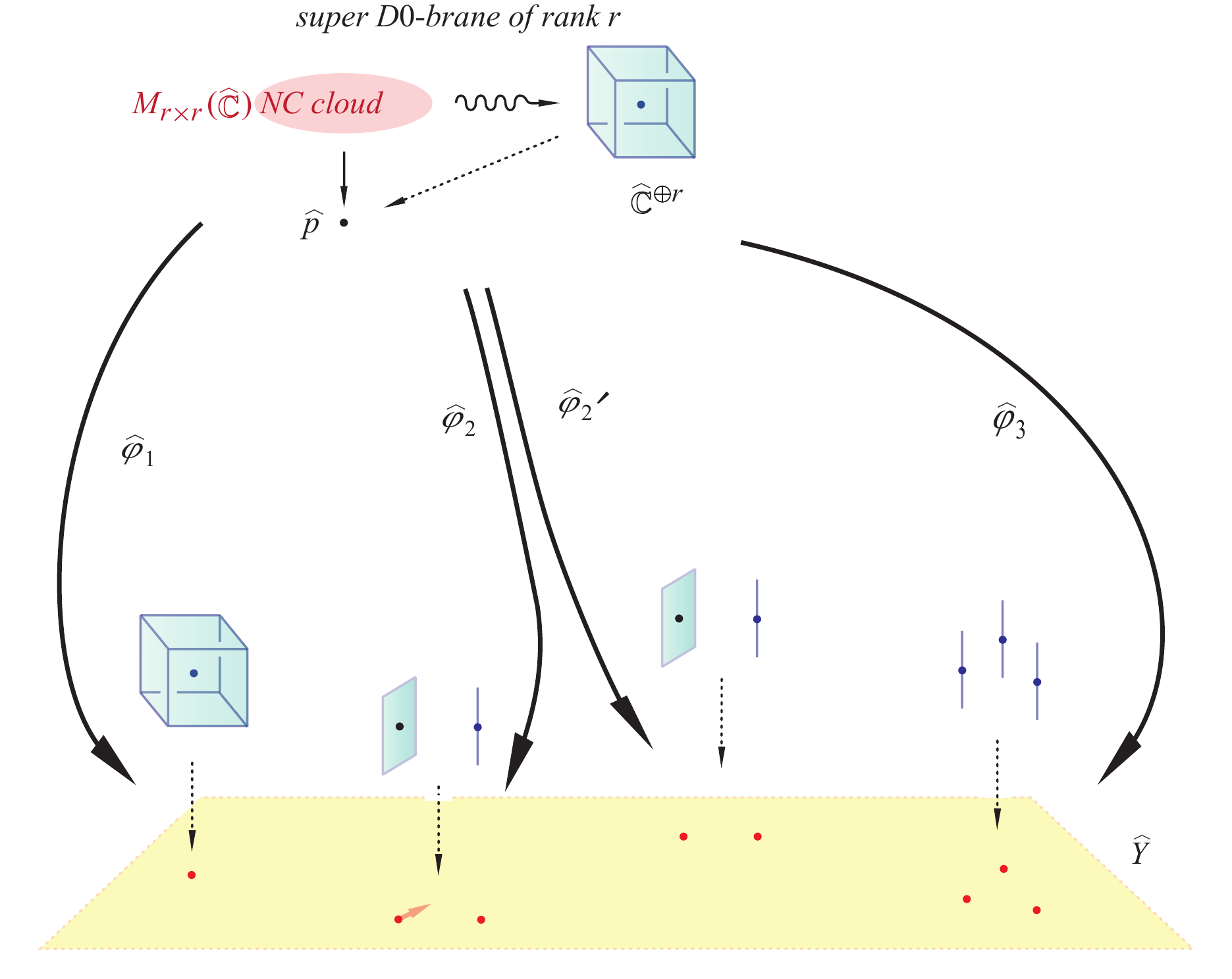}

 \begin{figure} [htbp]
  \bigskip
  \centering

  \includegraphics[width=0.60\textwidth]{map-from-Az-super-pt.pdf}

  %\bigskip
  \bigskip
  \centerline{\parbox{13cm}{\small\baselineskip 12pt
   {\sc Figure}~2-3-2-1.
    Four examples of $C^{\infty}$-maps
     $\widehat{\varphi}:(\widehat{p}^{A\!z}, \widehat{\Bbb C}^{\oplus r})
	     \rightarrow \widehat{Y}$
      from an Azumaya/matrix superpoint with a fundamental module
	     to a super $C^{\infty}$-manifold $\widehat{Y}$
	  are illustrated.
	The nilpotency of the image scheme $\Image\widehat{\varphi}$ in $\widehat{Y}$
	   is bounded by $(r-1)(s_1+1)$.	
    In the figure, the push-forward of the fundamental module in each example is also indicated.
       }}
  \bigskip
 \end{figure}

\bigskip

\subsection{$C^{\infty}$-maps from an Azumaya/matrix supermanifold to a real supermanifold}

We now proceed to prove Theorem~2.1.5 and Theorem~2.1.8 in their full statement.
                        % Theorem [every ring-homomorphism in question $C^{\infty}$-admissible]
						% Theorem  [$C^\infty$-map from Azumaya/matrix supermanifold to ${\Bbb R}^{n|s_2}$]
The required technical ingredients to generalize both theorems in the $C^{\infty}$ case
    in [L-Y4: Sec.$\:$3] (D(11.3.1))
 to the current super $C^{\infty}$ case are provided in Sec.$\:$2.2 and Sec.$\:$2.3.
The rest follows a similar argument to that in ibidem.
Some details are given here for the completeness of the discussion
 and also for bringing out objects to be used in other parts of the project.

\bigskip						

\subsubsection{Proof of Theorem~2.1.5}
                                            % Theorem [every ring-homomorphism in question $C^{\infty}$-admissible]

\begin{flushleft}
{\it Step $(a):$
        The only natural candidate for the extension to $C^{\infty}(X\times \widehat{Y}_{[s_2]})$}
\end{flushleft}											
Let
  $\widehat{\varphi}^{\sharp}: C^{\infty}(\widehat{Y}_{[s_2]})
         \rightarrow C^{\infty}(\End_{\widehat{\Bbb C}_{[s_1]}}(\widehat{E}))$
   be a ring-homomorphism over ${\Bbb R}\hookrightarrow {\Bbb C}$.
 Regard the endomorphism bundle
   $\End_{\widehat{\Bbb C}_{[s_1]}}(\widehat{E})$ over $X$
   also as an $X$-family of ${\Bbb C}$-algebras
    $\{\End_{\widehat{\Bbb C}_{[s_1]}}(\widehat{E}|_p) \}_{p\in X}$
  and
 consider the ${\Bbb C}$-algebra
  $C^{-\infty}(\End_{\widehat{\Bbb C}_{[s_1]}}(\widehat{E}))$
   of sections of $\End_{\widehat{\Bbb C}_{[s_1]}}(\widehat{E})\rightarrow X$
   without assuming any continuity or regularity conditions.
Then
 $\widehat{\varphi}^{\sharp}$ extends canonically to the ring-homomorphism
 $$
   \xymatrix @R=-.2ex @C=6em {
    \check{\widehat{\varphi}}^{\sharp}\;:\;
    C^{\infty}(X\times \widehat{Y}_{[s_2]})\;   \ar[r]		
	  &  \hspace{1em}C^{-\infty}(\End_{{\Bbb C}_{[s_1]}}(\widehat{E}))\hspace{1em}     \\
    \hspace{5.2em}\widehat{f}\hspace{3.2em}\ar@{|->}[r]
	  &  \;\left\{p \,\longmapsto\,
                          \left.\left(\widehat{\varphi}^{\sharp}
						                     (\widehat{f}|_{\{p\}\times \widehat{Y}_{[s_2]}})
						            \right)\right|_p	
						 \right \}_{p\in X}	
	 }
 $$
    over ${\Bbb R}\hookrightarrow{\Bbb C}$.
By construction, it fits into the following commutative diagram
  $$
   \xymatrix{
    C^{-\infty}(\End_{\widehat{\Bbb C}_{[s_1]}}(\widehat{E}))
	  & \;C^{\infty}(\End_{{\Bbb C}_{[s_1]}}(\widehat{E})) \ar@{_{(}->}[l]
	  &&  C^{\infty}(\widehat{Y}_{[s_2]})
	                    \ar[ll]_-{\widehat{\varphi}^{\sharp}}
						\ar@{_{(}->}[d]^-{pr_{\widehat{Y}_{[s_2]}}^{\sharp}}  \\	
    &&&  C^{\infty}(X\times \widehat{Y}_{[s_2]})
	                        \ar[lllu]^-{\check{\widehat{\varphi}}^{\sharp}}
	}
  $$
   of ring-homomorphisms, where both inclusions in the diagram are naturally built-in.
Which extends further to the following commutative diagram of ring-homomorphisms
 $$
   \xymatrix{
   & \hspace{1ex}
      C^{-\infty}(\End_{\widehat{\Bbb C}_{[s_1]}}(\widehat{E}))
	     &  \hspace{1ex}
		     C^{\infty}(\End_{\widehat{\Bbb C}_{[s_1]}}(\widehat{E}))
		             \ar @{_{(}->}[l]
	   &&& \rule{0ex}{3ex}\hspace{1ex}
	            C^{\infty}(\widehat{Y}_{[s_2]})
				                \ar[lll]_-{\widehat{\varphi}^{\sharp}}
	                            \ar @{_{(}->}[d]^{pr_{\widehat{Y}_{[s_2]}}^{\sharp}}        \\	
   & & \rule{0ex}{2.4ex}
	     \;C^{\infty}(X)\; \ar @{^{(}->}[rrr]_-{pr_X^{\sharp}} \ar@{^{(}->}[u]
	    &&& C^{\infty}(X\times \widehat{Y}_{[s_2]}) 	
		          \ar @/^1ex /  [ullll]_(.4){\check{\widehat{\varphi}}^{\sharp}}   &,
	}
  $$
  where
    $\pr_X: X\times \widehat{Y}_{[s_2]}\rightarrow X$ and
	  $\pr_{\widehat{Y}_{[s_2]}}: X\times \widehat{Y}_{[s_2]}
	      \rightarrow \widehat{Y}_{[s_2]}$
    are the projection maps and
   the inclusion
      $C^{\infty}(X)\hookrightarrow
           C^{\infty}(\End_{\widehat{\Bbb C}_{[s_1]}}(\widehat{E}))$
    follows from the composition of
	the built-in inclusion
	  $C^\infty(\End_{\Bbb C}(E))\hookrightarrow
	      C^\infty(\End_{\widehat{\Bbb C}_{[s_1]}}(\widehat{E}))$  and
	the inclusion of the center $C^{\infty}(X)^{\Bbb C}$ of $C^{\infty}(\End_{\Bbb C}(E))$.

\bigskip

\begin{flushleft}
{\it Step $(b):$ Smoothness of $\check{\widehat{\varphi}}^{\sharp}$
                                   over $X$ via Malgrange Division Theorem}
\end{flushleft}											
To understand whether $\check{\widehat{\varphi}}^{\sharp}$ takes its values in
   $C^{\infty}(\End_{\widehat{\Bbb C}_{[s_1]}}(\widehat{E}))$,
 one needs to know how
   $$
      \check{\widehat{\varphi}}^{\sharp}|_{\{p\}\times \widehat{Y}_{[s_2]}}\;:\;
       C^{\infty}(\widehat{Y}_{[s_2]})\;
		  \longrightarrow\; \End_{\widehat{\Bbb C}_{[s_1]}}(\widehat{E}|_p)
   $$
	varies as $p$ varies along $X$.
This leads us to studying the germs of $\check{\widehat{\varphi}}^{\sharp}$ over $X$ as given below.

\bigskip

\begin{ssdefinition} {\bf [spectral locus/subscheme
       of $\widehat{\varphi}^{\sharp}$ in $X\times \widehat{Y}_{[s_2]}$]}$\;$ {\rm
 Recall
   the built-in inclusions
     $C^{\infty}(Y) \subset C^{\infty}(\widehat{Y}_{[s_2]})$ and
	 $C^{\infty}(X\times Y)\subset C^{\infty}(X\times \widehat{Y}_{[s_2]})$,	
	 and
   the decomposition $\widehat{m}= m_{(0)}+ \widehat{m}_{(\ge 1)}$
       for $\widehat{m}\in C^{\infty}(\End_{\widehat{\Bbb C}_{[s_1]}}(\widehat{E}))$
	   from Notation~2.1.2.
	        %Notation [basic]
 For $f\in C^{\infty}(\widehat{Y}_{[s_2]})$,
  denote by $\widehat{\varphi}^{\sharp}(f)_{(0)}$, in $ C^{\infty}(\End_{\Bbb C}(E))$,
    the $(0)$-component of
	 $\widehat{\varphi}^{\sharp}(f)
	    \in C^{\infty}(\End_{\widehat{\Bbb C}_{[s_1]}}(\widehat{E}))$.
 Let
    $\check{I}_{\widehat{\varphi}}$ be the ideal of $C^{\infty}(X\times \widehat{Y}_{[s_2]})$
     generated by the set
     $$
        \{\,\determinant (f\cdot\Id_E- \widehat{\varphi}^{\sharp}(f)_{(0)})^{s_1+1} \,
		       |\, f\in C^{\infty}(Y)\,\}
     $$
     of elements in $C^{\infty}(X\times Y)$, where $\Id_E$ is the identity map on $E$.
   $\check{I}_{\widehat{\varphi}}$  defines a super $C^{\infty}$-subscheme
      $\check{\varSigma}_{\widehat{\varphi}}$ of $X\times \widehat{Y}_{[s_2]}$,
    called interchangeably the {\it spectral locus}
	or the {\it spectral subscheme} of $\widehat{\varphi}^{\sharp}$
    in $X\times \widehat{Y}_{[s_2]}$.
}\end{ssdefinition}

\bigskip

Basic properties of $\check{\varSigma}_{\widehat{\varphi}}$
 that follow immediately from the defining ideal $\check{I}_{\widehat{\varphi}}$
  are listed below:
  \begin{itemize}
   \item[\LARGE $\cdot$]
    {\it $\check{\varSigma}_{\widehat{\varphi}}$ is finite over $X$}
 	 in the sense that, for all $p\in X$,
	 the preimage $\pr_X^{-1}(p)$  of the morphism
	    $\pr_X:\check{\varSigma}_{\widehat{\varphi}}\rightarrow X$
	   from the restriction of the projection map $X\times \widehat{Y}_{[s_2]}\rightarrow X$
	  is a $0$-dimensional super $C^{\infty}$-scheme
	  with the function-ring given by
	          a finite-dimensional ${\Bbb Z}/2$-graded ${\Bbb Z}/2$-commutative ${\Bbb R}$-algebra. 	
	
   \item[\LARGE $\cdot$]
    A comparison with the study of ring-homomorphisms from $C^{\infty}({\Bbb R}^n)$ to
	 $M_{r\times r}({\Bbb C})$ in [L-Y2: Sec.$\:$3.2] (D(11.1)),
	 together with the super supplement in Sec.$\:$2.2,
	 implies that
	 \begin{itemize}
	  \item[-$\;$]
       $\check{\widehat{\varphi}}^{\sharp}(\check{I}_{\widehat{\varphi}})\;=\; 0\,$.
	
	  \item[-$\;$]\it
	   for all $\widehat{f}\in C^{\infty}(X\times \widehat{Y}_{[s_2]})$,
       $\check{\widehat{\varphi}}^{\sharp}(\widehat{f})
	       \in C^{-{\infty}}(\End_{\widehat{\Bbb C}_{[s_1]}}(\widehat{E}))$
	   depends only on the restriction of $\widehat{f}$ on the super $C^{\infty}$-subscheme
	    $\check{\varSigma}_{\widehat{\varphi}}\subset X\times \widehat{Y}_{[s_2]}$.	
	 \end{itemize}
  \end{itemize} 	
Cf.\ {\sc Figure}~2-4-1-1.
%    %
%    \marginpar{\raggedright\tiny $\bullet$ {\sc Figure} 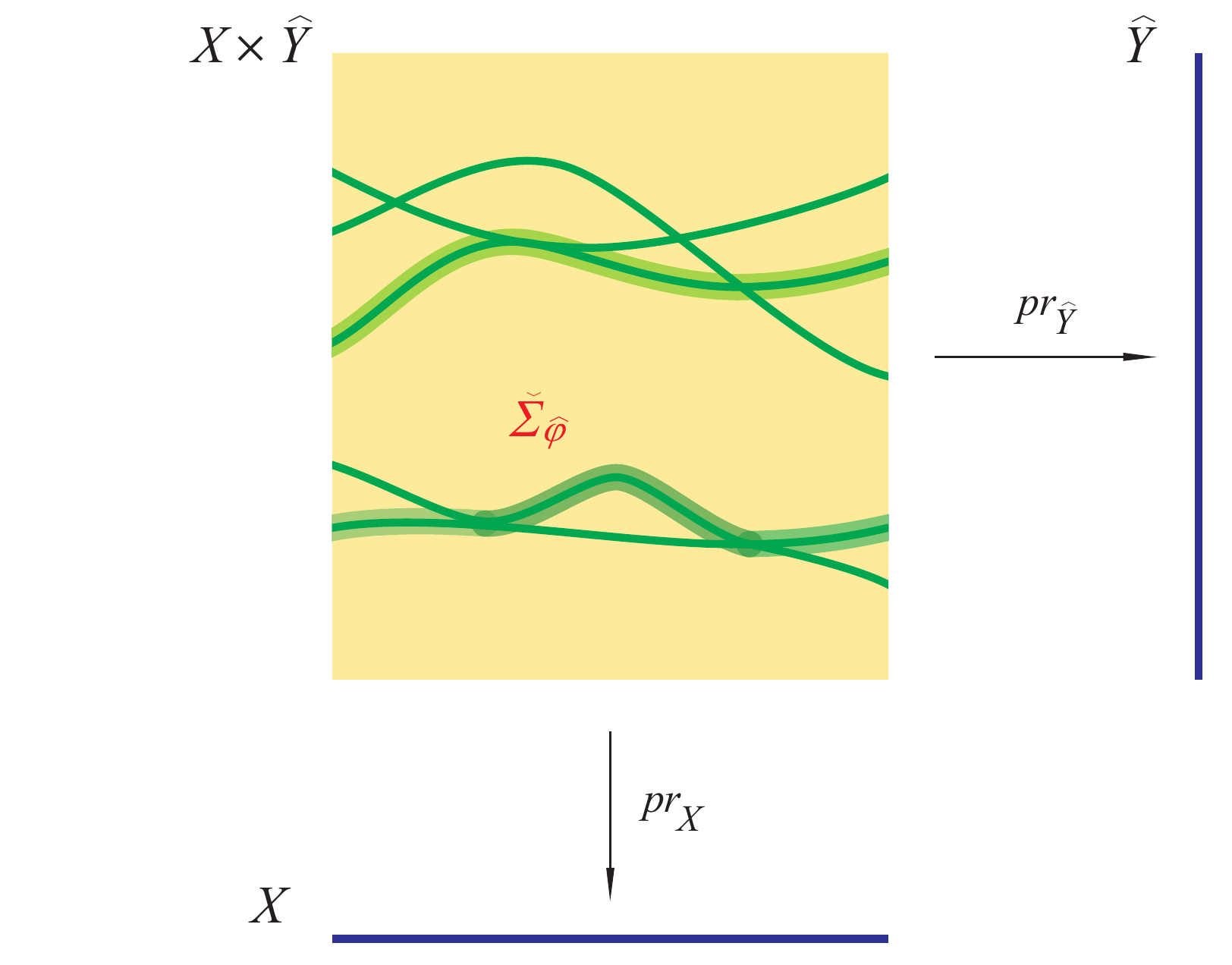.}

 \begin{figure} [htbp]
  \bigskip
  \centering

  \includegraphics[width=0.55\textwidth]{super-spectral-subscheme.pdf}

  \bigskip
  \bigskip
  \centerline{\parbox{13cm}{\small\baselineskip 12pt
   {\sc Figure}~2-4-1-1.
     The spectral subscheme $\check{\varSigma}_{\widehat{\varphi}}$
	   (in green color, with the green shade indicating the nilpotent structure/cloud
	       on $\check{\varSigma}_{\widehat{\varphi}}$)
	   in $X\times \widehat{Y}_{[s_2]}$
	  associated to a ring-homomorphism
	   $\widehat{\varphi}^{\sharp}:C^{\infty}(\widehat{Y}_{[s_2]})
	      \rightarrow C^{\infty}(\End_{\widehat{\Bbb C}_{[s_1]}}(\widehat{E}))$.
	 It is a super $C^{\infty}$-scheme that is finite over $X$.
       }}
  \bigskip
 \end{figure}	

 Recall the morphism $\pr_X: \check{\varSigma}_{\widehat{\varphi}}\rightarrow X$.
 Let $p\in X$.
 Then
  since $\pr_X^{-1}(p)$ is $0$-dimensional,
  there exists an open neighborhood $U$ of $p$ such that
   $\pr_X^{-1}(U)$
     is contained in an open subset $U\times \widehat{V}$ of $X\times \widehat{Y}_{[s_1]}$,
	 where $\widehat{V}$ is an open set of $\widehat{Y}$
 	 that is diffeomorphic to ${\Bbb R}^{n|s_2}$ with $n=\dimm Y$.
 Under the diffeomorphism $\widehat{V}\simeq {\Bbb R}^{n|s_2}$,
  let
   $(y^1,\,\cdots\,,\, y^n)$  be coordinates on the underlying smooth manifold $V$ of $\widehat{V}$   and
   $C^{\infty}(\widehat{V})
              \simeq  C^{\infty}({\Bbb R}^n)[\vartheta_1,\,\cdots\,,\, \vartheta_{s_2}]^{\anticommuting}$.
 Let
  $$
    (\pr_X^{-1}(p))_{red}\;=\; \{q_1,\,\cdots\,,\,q_l\}
  $$
  be the set of closed points in $\pr_X^{-1}(p)\subset  \check{\varSigma}_{\widehat{\varphi}}$.
 (For notation,
     $q_j=(p;  (q_j^1,\,\cdots\,,\,q_j^n))\in U\times V$
	 in the coordinate system $(y^1,\,\cdots\,,\,y^n)$ on $V$.)
 Consider the auxiliary super $C^{\infty}$-subscheme
  $$
    \check{\varSigma}_{(y^1,\,\cdots\,,\,y^n)}\; \subset \; U\times \widehat{V}
  $$
  defined by the ideal
  $$
   \mbox{
    $\check{I}_{(y^1,\,\cdots\,,\,y^n)}\;
	 :=\;  (g_1,\,\cdots\,, g_n)\;\subset\; C^{\infty}(U\times \widehat{V})\,,\;\;$
	 where
     $\;g_i\;:=\;
	      \determinant(y^i\cdot\Id_E- \widehat{\varphi}^{\sharp}(y^i)_{(0)})^{s_1+1}\,$.
	}
  $$
 Then,
  $$
    \check{\varSigma}_{\widehat{\varphi}}\,\cap\, U\times \widehat{V}\;
	 \subset\;  \check{\varSigma}_{(y^1,\,\cdots\,,\,y^n)}\,.
  $$
 Since
   $\check{\widehat{\varphi}}^{\sharp}(\widehat{f})$
	    for $\widehat{f}\in C^{\infty}(X\times \widehat{Y}_{[s_2]})$
     is defined pointwise over $X$    and
	 depends only on $\widehat{f}|_{\check{\varSigma}_{\widehat{\varphi}}}$,
  $\check{\widehat{\varphi}}^{\sharp}$
    is tautologically defined on $C^{\infty}(U\times \widehat{V})$ over $U$ as well
  and, again, one has
  $$
    \check{\widehat{\varphi}}^{\sharp}(\check{I}_{(y^1,\,\cdots\,,\,y^n)})\; =\; 0\,.
  $$
  
  Now let
  $$
    d_{i,1},\,\cdots\,,\,  d_{i,s}
  $$
  be the regularity of $g_i$ along the $y^i$-coordinate direction at $q_1,\,\cdots,\,\, q_l$ respectively
  (cf.\ [Br: 6.1 Definition]).
  I.e.\
  $$
    g_i(q_j)\;=\; \partial_ig_i(q_j) \; =\; \cdots\;
	 =\; \partial_i^{d_{i,j}-1}g_i(q_j)\;=\; 0
	\hspace{1em}\mbox{while}\hspace{1em}
	\partial_i^{d_{i,j}}g_i(q_j)\;\ne \; 0\,.	
  $$
  Here, $\partial_i := \partial/\partial y^i$.
 Then, it follows from the Malgrange Division Theorem ([Mal]; see also [Br], [Mat1], [Mat2], [Ni]) that
  \begin{itemize}
    \item[]\it
    the germ of $\widehat{f}\in C^{\infty}(X\times \widehat{Y}_{[s_2]})$ at $q_j$ admits a normal form
	 $$
	    \widehat{f}=  \widehat{f}_0^{\:(q_j)}\,+\, \widehat{f}_1^{\:(q_j)}
	 $$
	with
	 $$
	  \begin{array}{c}
	    \widehat{f}_0^{\:(q_j)}\;
		 \in\; C^{\infty}(U)[y^1,\,\cdots\,,\, y^n]
		                                           [\vartheta^1,\,\cdots\,,\,\vartheta^{s_2}]^{\anticommuting}\\[.6ex]
        \hspace{12em}\mbox{of $\;(y^1,\,\cdots\,,\,y^n)$-degree}\;
        \le(d_{1,j}-1,\,\cdots\,,\, d_{n,j}-1)\\[2ex]
       \hspace{2em}\mbox{and}\hspace{2em}
       \widehat{f}_1^{\:(q_j)}\;\in \; \check{I}_{(y^1,\,\cdots\,,\,y^n)}\,. \hspace{5em}	   		
	  \end{array}
	 $$
  \end{itemize}
 After
   shrinking the neighborhood $U$ of $p\in X$ further, if necessary,  and
   capping $\widehat{f}_0^{\:(q_j)}$ (still denoted by $\widehat{f}_0^{\:(q_j)}$)
  by a smooth cutoff function with support a disjoint union of small enough coordinate balls
  around $q_j$, $j=1,\,\ldots\,,\,l$,
 $$
  \begin{array}{l}
   \check{\widehat{\varphi}}^{\sharp}(\widehat{f})|_{U}\;
    =\;   \check{\widehat{\varphi}}^{\sharp}
               (\,\mbox{$\sum_{j=1}^l \widehat{f}_0^{\:(q_j)}$}\,)\;
    \in\; C^{\infty}(U)
            [\widehat{\varphi}^{\sharp}(y^1),\,\cdots\,,\, \widehat{\varphi}^{\sharp}(y^n)]
            [\widehat{\varphi}^{\sharp}(\vartheta^1),\,\cdots\,,\,
			 \widehat{\varphi}^{\sharp}(\vartheta^{s_2})]^{\anticommuting}   \\[1.2ex]
    \hspace{12.5em}
    \subset\; C^{\infty}(\End_{\widehat{\Bbb C}_{[s_1]}}(\widehat{E}|_U))
  \end{array}
 $$
 since
   \begin{itemize}
    \item[\LARGE $\cdot$]
	 $\check{\widehat{\varphi}}^{\sharp}(h)\;=\; \widehat{\varphi}^{\sharp}(h) $
	    for all $h\in C^{\infty}(\widehat{Y}_{[s_2]})
		                \subset C^{\infty}(X\times \widehat{Y}_{[s_2]})$,\\
	   particularly for $y^1,\,\cdots\,,\, y^n$ and $\vartheta^1,\,\cdots\,,\vartheta^{s_2}$;
	
	\item[\LARGE $\cdot$]
	 $\check{\widehat{\varphi}}^{\sharp}(h)\;=\; h\cdot \Id_{\widehat{E}}$
 	   for all $h\in C^{\infty}(X)$,
	  where $\Id_{\widehat{E}}$ is the identity map on $\widehat{E}$.
   \end{itemize}
Since smoothness is a local (indeed, infinitely infinitesimal) property, 	
   smoothness of $\check{\widehat{\varphi}}^{\sharp}(\widehat{f})$
    for all $\widehat{f}\in C^{\infty}(X\times \widehat{Y}_{[s_2]})$
   follows.
This shows that
	 $\Image(\check{\widehat{\varphi}}^{\sharp})
	   \subset C^{\infty}(\End_{\widehat{\Bbb C}_{[s_1]}}(\widehat{E}))$  and
one has a commutative diagram of ring-homomorphisms
 $$
   \xymatrix{
    & \;C^{\infty}(\End_{{\Bbb C}_{[s_1]}}(\widehat{E}))
	  &&&  C^{\infty}(\widehat{Y}_{[s_2]})
	                    \ar[lll]_-{\widehat{\varphi}^{\sharp}}
						\ar@{_{(}->}[d]^-{pr_{\widehat{Y}_{[s_2]}}^{\sharp}}  \\	
    & &&&  C^{\infty}(X\times \widehat{Y}_{[s_2]})
	                        \ar[lllu]^-{\check{\widehat{\varphi}}^{\sharp}}    &.
	}
  $$

\bigskip

\begin{flushleft}
{\it Step $(c):$ Final canonical extension to $C^{\infty}(\widehat{X\times Y}_{[s_1,s_2]})$}
\end{flushleft}											
Recall the canonical embedding
 $C^{\infty}(\widehat{X}_{[s_1]})
    \subset  C^{\infty}(\End_{\widehat{\Bbb C}_{[s_1]}}(\widehat{E}))$
 by the right ${\cal O}_{\widehat{X}_{[s_1]}}$-module structure of 	$\widehat{\cal E}$.
Since
   \begin{itemize}
     \item[\LARGE $\cdot$]
     $\Image \check{\widehat{\varphi}}^{\sharp}
        \subset C^{\infty}(\End_{\widehat{\Bbb C}_{[s_1]}}(\widehat{E}))$
      acts on $\widehat{E}$ from the left and hence commutes with $C^{\infty}(\widehat{X}_{[s_1]})$	
	   and
	
	 \item[\LARGE $\cdot$]
      the ring-extension
        $C^{\infty}(X\times \widehat{Y}_{[s_2]})\hookrightarrow
          C^{\infty}(\widehat{X\times Y}_{[s_1,s_2]})$
        is split-exact, locally free, and of ${\Bbb Z}/2$ algebraic type,
   \end{itemize}
 $\check{\widehat{\varphi}}^{\sharp}$ extends canonically to a ring-homomorphism
 $$
  \tilde{\widehat{\varphi}}^{\sharp}\;:\;
    C^{\infty}(\widehat{X\times Y}_{[s_1,s_2]})\;
	\longrightarrow\; C^{\infty}(\End_{\widehat{\Bbb C}_{[s_1]}}(\widehat{E}))
 $$
  that makes the following diagram of ring-homomorphisms commute:
 $$
   \xymatrix{
    & \;C^{\infty}(\End_{{\Bbb C}_{[s_1]}}(\widehat{E}))
	  &&&  C^{\infty}(\widehat{Y}_{[s_2]})
	                    \ar[lll]_-{\widehat{\varphi}^{\sharp}}
						\ar@{_{(}->}[d]^-{pr_{\widehat{Y}_{[s_2]}}^{\sharp}}  \\	
    & &&&  C^{\infty}(X\times \widehat{Y}_{[s_2]})
	                        \ar[lllu]^-{\check{\widehat{\varphi}}^{\sharp}}
                            \ar@{_{(}->}[d]							\\
    & \;  \rule{0ex}{2em}\;C^{\infty}(\widehat{X}_{[s_1]})\;
                              \ar@{^{(}->}[uu]
			                  \ar@{^{(}->}[rrr]_-{pr_{\widehat{X}_{[s_1]}}^{\sharp}}	
        &&& \;C^{\infty}(\widehat{X\times Y}_{[s_1,s_2]})\;
                              \ar @/^2ex/ [llluu]^-{\tilde{\widehat{\varphi}}^{\sharp}}    		&.	
	}
  $$

\bigskip

\begin{flushleft}
{\it Step $(d):$ $C^{\infty}$-admissibility}
\end{flushleft}		
Let
 $A_{\widehat{\varphi},0}
   := \tilde{\widehat{\varphi}}^{\sharp}
         (C^{\infty}(\widehat{X\times Y}_{[s_1,s_2]})_{\even})$.	
Then, 			
 as a consequence of the Hadamard's Lemma,
 the $C^{\infty}$-ring structure on $C^{\infty}(\widehat{X\times Y}_{[s_1,s_2]})_{\even}$
 always descends, via $\tilde{\widehat{\varphi}}^{\sharp}$,
 to a $C^{\infty}$-ring structure on $A_{\widehat{\varphi},0}$
 that is compatible with the underlying ring-structure of $A_{\widehat{\varphi},0}$.
In this way, one obtains a commutative diagram
   $$
	 \xymatrix{
		  A_{\widehat{\varphi},0}
			     &&& C^{\infty}(\widehat{Y}_{[s_2]})_{\even}
				              \ar[lll]_-{\widehat{\varphi}^{\sharp}|_{\tinyeven}}
			                  \ar@{_{(}->}^-{pr_{\widehat{Y}_{[s_2]}}^{\sharp}|_{\tinyeven}}[d]   \\			
		    \rule{0ex}{1.2em}\;\;C^{\infty}(\widehat{X}_{[s_1]})_{\even}\;\;
			                  \ar@{^{(}->}[u]
			                  \ar@{^{(}->}[rrr]_-{pr_{\widehat{X}_{[s_1]}}^{\sharp}|_{\tinyeven}}
				 &&& C^{\infty}(\widehat{X\times Y}_{[s_1,s_2]})_{\even}
				              \ar@{->>}[lllu]_-{\tilde{\widehat{\varphi}}^{\sharp}|_{\tinyeven}}		 
		}
   $$
  of $C^{\infty}$-ring-homomorphisms.
This shows that $\widehat{\varphi}^{\sharp}$ is $C^{\infty}$-admissible and proves the theorem.

\bigskip

Before leaving this subsubsection, we introduce a terminology and a notation for future use.

\bigskip

\begin{ssdefinition}
{\bf [spectral locus/subscheme of $\widehat{\varphi}^{\sharp}$
            in $\widehat{X\times Y}_{[s_1,s_2]}$]}$\;$ {\rm
 Recall
  the spectral subscheme $\check{\varSigma}_{\widehat{\varphi}}$
    of $\widehat{\varphi}^{\sharp}$ in $X\times \widehat{Y}_{[s_2]}$  and
  the built-in morphism $\widehat{X\times Y}_{[s_1,s_2]}\rightarrow X\times \widehat{Y}_{[s_2]}$
      of super $C^{\infty}$-schemes.
The preimage $\varSigma_{\widehat{\varphi}}$ of $\check{\varSigma}_{\widehat{\varphi}}$
 under the above morphism is called
 the {\it spectral locus/subscheme of $\widehat{\varphi}^{\sharp}$
            in $\widehat{X\times Y}_{[s_1,s_2]}$}.
}\end{ssdefinition}

\bigskip											
											
\subsubsection{Proof of Theorem~2.1.8}
                                            % Theorem  [ring-homomorphism to
	                                        %                    $C^{\infty}(\End_{\widehat{\Bbb C}_{[s_1]}}(\widehat{E}))$]

Given $\widehat{\eta}$ in the statement of the theorem,
  it follows from Sec.$\:$2.3.2 that
   for all $p\in X$, the assignment from restriction
   $$
     \widehat{\eta}|_p\;:\;
	  \left\{
	  \begin{array}{lll}
	    y^i  & \longmapsto
	           & \widehat{m}_i(p) \\
        \vartheta^l  & \longmapsto
               & \Theta_l(p)		
	  \end{array}\right.\;
	         \in\,   \End_{\widehat{\Bbb C}_{[s_1]}}(\widehat{E}|_p)\,,
   $$			
     for $i =1,\,\ldots\,,n\,,\; l=1,\,\ldots\,,\, s_2$,	
   extends uniquely to a ring-homomorphism
   $$
      \widehat{\varphi}^{\sharp}_{\widehat{\eta}|_p}\;:\;
	   C^{\infty}({\Bbb R}^{n|s_2})=
	     C^{\infty}({\Bbb R}^n)[\vartheta^1,\,\cdots\,,\, \vartheta^{s_2}] \;
	    \longrightarrow\;   \End_{\widehat{\Bbb C}_{[s_1]}}(\widehat{E}|_p)
   $$
   over ${\Bbb R}\hookrightarrow {\Bbb C}$ that is $C^{\infty}$-admissible over $p$.
 As $p$ varies,  $\widehat{\eta}$ extends uniquely to a ring-homomorphism
    $$
     \widehat{\varphi}_{\widehat{\eta}}^{\sharp}\;:\;
  	  C^{\infty}({\Bbb R}^{n|s_2})\; \longrightarrow\;
	  C^{-{\infty}}(\End_{\widehat{\Bbb C}_{[s_1]}}(\widehat{E}))
    $$
    over ${\Bbb R}\hookrightarrow {\Bbb C}$.
 The same construction as Step (a) in the proof of Theorem~2.1.5
                                                                  % Theorem [every ring-homomorphism in question $C^{\infty}$-admissible]
   extends $\widehat{\varphi}_{\widehat{\eta}}^{\sharp}$ further and uniquely to a ring-homomorphism
     $$
	   \check{\widehat{\varphi}}_{\widehat{\eta}}^{\sharp}\;:\;
	      C^{\infty}(X\times {\Bbb R}^{n|s_2})\; \longrightarrow\;
		  C^{-{\infty}}(\End_{\widehat{\Bbb C}_{[s_1]}}(\widehat{E}))
	 $$
	 over ${\Bbb R}\hookrightarrow {\Bbb C}$
	that fits into the following commutative diagram
	 $$
      \xymatrix{
        \hspace{1ex}
         C^{-\infty}(\End_{\widehat{\Bbb C}_{[s_1]}}(\widehat{E}))
	       &  \hspace{1ex}C^{\infty}(\End_{\widehat{\Bbb C}_{[s_1]}}(\widehat{E}))
		                                          \ar @{_{(}->}[l]
	      &&& \rule{0ex}{3ex}\hspace{1ex}
	               C^{\infty}({\Bbb R}^{n|s_2})
				         \ar@/_2em/[llll]_-{\widehat{\varphi}_{\widehat{\eta}}^{\sharp}}
	                     \ar @{_{(}->}[d]^{pr_{{\Bbb R}^{n|s_2}}^{\sharp}}        \\	
       & \rule{0ex}{2.4ex}
	        \;C^{\infty}(X)\; \ar @{^{(}->}[rrr]_-{pr_X^{\sharp}} \ar@{^{(}->}[u]
	       &&& C^{\infty}(X\times {\Bbb R}^{n|s_2}) 	
		                    \ar @/^1ex /  [ullll]_(.4){\check{\widehat{\varphi}}_{\widehat{\eta}}^{\sharp}}\;,
	   }
     $$
	of ring-homomorphisms while satisfying the condition that
	 $$
        \check{\widehat{\varphi}}_{\widehat{\eta}}^{\sharp}
		    |_{\{p\}\times {\Bbb R}^{n|s_2} }\;
	   =\; \widehat{\varphi}_{\widehat{\eta}|_p}^{\sharp}\;
		:\;  C^{\infty}({\Bbb R}^{n|s_2})\;
		       \longrightarrow\; \End_{\widehat{\Bbb C}_{[s_1]}}(\widehat{E}|_p)\,,
     $$
     for all $p\in X$.
	
 The same argument as Step (b) in the proof of Theorem~2.1.5,
                                                              % Theorem [every ring-homomorphism in question $C^{\infty}$-admissible]
      using the Malgrange Division Theorem,															  
    implies that indeed
     $\check{\widehat{\varphi}}_{\widehat{\eta}}^{\sharp}$
   	 takes values in $C^{\infty}(\End_{\widehat{\Bbb C}_{[s_1]}}(\widehat{E}))$.
 Thus, so does $\widehat{\varphi}_{\widehat{\eta}}^{\sharp}$.	
 As in Step (c) there,
  $\check{\widehat{\varphi}}_{\widehat{\eta}}^{\sharp}$ extends finally to
  a ring-homomorphism
   $$
     \tilde{\widehat{\varphi}}_{\widehat{\eta}}^{\sharp}\;:\;
	  C^{\infty}(\widehat{X\times {\Bbb R}}_{[s_1,s_2]})\;
	  \longrightarrow\;  C^{\infty}(\End_{{\Bbb C}_{[s_1]}}(\widehat{E}))
   $$
  that fits into the following diagram of ring-homomorphisms
  $$
   \xymatrix{
    & \;C^{\infty}(\End_{{\Bbb C}_{[s_1]}}(\widehat{E}))
	  &&&  C^{\infty}({\Bbb R}^{n|s_2})
	                    \ar[lll]_-{\widehat{\varphi}_{\widehat{\eta}}^{\sharp}}
						\ar@{_{(}->}[d]^-{pr_{{\Bbb R}^{n|s_2}}^{\sharp}}  \\	
    & &&&  C^{\infty}(X\times {\Bbb R}^{n|s_2})
	                        \ar[lllu]^-{\check{\widehat{\varphi}}_{\widehat{\eta}}^{\sharp}}
                            \ar@{_{(}->}[d]							\\
    & \;  \rule{0ex}{2em}\;C^{\infty}(\widehat{X}_{[s_1]})\;
                              \ar@{^{(}->}[uu]
			                  \ar@{^{(}->}[rrr]_-{pr_{\widehat{X}_{[s_1]}}^{\sharp}}	
        &&& \;C^{\infty}(\widehat{X\times {\Bbb R}^n}_{[s_1,s_2]})\;
                              \ar @/^2ex/ [llluu]^-{\tilde{\widehat{\varphi}}_{\widehat{\eta}}^{\sharp}}    &.	
	}
  $$ 	
  
 Let
  $A_{\widehat{\varphi}_{\widehat{\eta}},0}
     := \tilde{\widehat{\varphi}}_{\widehat{\eta}}^{\sharp}
         (C^{\infty}(\widehat{X\times {\Bbb R}^n}_{[s_1,s_2]})_{\even})$,
  with the quotient $C^{\infty}$-ring structure.
Then, same as Step (d) there,
  the following commutative diagram of $C^{\infty}$-ring-homomorphisms
  $$
	 \xymatrix{
		  A_{\widehat{\varphi}_{\widehat{\eta}},0}
			     &&& C^{\infty}({\Bbb R}^{n|s_2})_{\even}
				              \ar[lll]_-{\widehat{\varphi}_{\widehat{\eta}}^{\sharp}|_{\tinyeven}}
			                  \ar@{_{(}->}^-{pr_{{\Bbb R}^{n|s_2}}^{\sharp}|_{\tinyeven}}[d]   \\			    
		    \rule{0ex}{1.2em}\;\;C^{\infty}(\widehat{X}_{[s_1]})_{\even}\;\;
			                  \ar@{^{(}->}[u]
			                  \ar@{^{(}->}[rrr]_-{pr_{\widehat{X}_{[s_1]}}^{\sharp}|_{\tinyeven}}
				 &&& C^{\infty}(\widehat{X\times {\Bbb R}^n}_{[s_1,s_2]})_{\even}
				              \ar@{->>}
							   [lllu]_-{\tilde{\widehat{\varphi}}_{\widehat{\eta}}^{\sharp}|_{\tinyeven}}
		}
   $$
  justifies that
   $\widehat{\varphi}_{\widehat{\eta}}^{\sharp}$ is $C^{\infty}$-admissible.

This proves the theorem.

\bigskip

\section{Remarks on fermionic D-branes and on ``noncommutative $C^{\infty}$-rings" after the study}

Some remarks are given in this section to connect to future works.

\bigskip

\subsection{Fermionic D-branes as dynamical objects \`{a} la RNS or GS fermionic strings}

With the improved understanding of the notion of
  `morphisms from an Azumaya/matrix super $C^{\infty}$-manifold to a super $C^{\infty}$-manifold',
one can now spell out how a dynamical fermionic stacked D-brane in string theory can be described in our language,
 following Polchinski-meeting-Grothendieck:
  
\bigskip

\begin{definition-prototype}
 {\bf [fermionic D-branes \`{a} la RNS or GS fermionic string]}$\;$ {\rm
 Recall the setting in Sec.$\:$2.1.
 \;(1)
 Let
   $\widehat{Y}
      =(Y, \widehat{\cal O}_Y:=  \bigwedge^{\bullet}_{{\cal O}_Y}{\cal S}_2^{\vee})$
    be a super $C^{\infty}$-manifold,
   with the underlying $C^{\infty}$-manifold $Y$ equippend with a Riemannian or Lorentzian metric
      (depending on the context).
 Then,
   a {\it fermionic D-brane} (synonymously, {\it super D-brane}) {\it on $\widehat{Y}$}
     (or {\it fermionic D-brane world-volume}, synonymously {\it super D-brane world-volume}
         on $\widehat{Y}$ when appropriately formulated for $Y$ Lorentzian)
   consists of the following data:
   $$
     (X,\: E,\: S_1,\:  \widehat{\nabla},\:
	     \widehat{\varphi}: \widehat{X}^{\!A\!z}\rightarrow \widehat{Y})\,,
   $$
  where (cf.$\:$ Definition~2.1.1)
                          % Definition [Azumaya/matrix super $C^{\infty}$-manifold with a fundamental module]
  %
  \begin{itemize}
   \item[\Large $\cdot$]
    $X$ is a $C^{\infty}$-manifold
	(with a Riemannian or Lorentzian metric, depending on the context),
		
   \item[\Large $\cdot$]	
   $E$ is a smooth complex vector bundle on $X$ (of rank$_{\Bbb C}$ $r$),
   
   \item[\Large $\cdot$]	
    $S_1$ is a smooth real vector bundle on $X$ (of rank$_{\Bbb R}$ $s_1$),
	
   \item[\Large $\cdot$]
    $(\widehat{X}^{\!A\!z},\widehat{\cal E})$
      is the Azumaya/matrix super $C^\infty$-manifold with a fundamental module
	  specified by $(E,S)$,

   \item[\Large $\cdot$]
     $\widehat{\nabla}$ is a connection on $\widehat{E}$,
   
   \item[\Large $\cdot$]	
    $\,\widehat{\varphi}: \widehat{X}^{\!A\!z}\rightarrow \widehat{Y}\,$					
     is a $C^{\infty}$-map from $\widehat{X}^{\!A\!z}$ to $\widehat{Y}$,
	 defined contravariantly by a ring-homomorphism
	 $$
       \widehat{\varphi}^{\sharp}\;:\; C^{\infty}(\widehat{Y})\; \longrightarrow\;
	     C^{\infty}(\End_{\widehat{\Bbb C}_{[s_1]}}(\widehat{E}))
     $$	
   	  over ${\Bbb R}\hookrightarrow{\Bbb C}$,
  \end{itemize}
 such that
   \begin{itemize}
	\item[\Large $\cdot$]
	 The pair $(\widehat{\varphi},\widehat{\nabla})$
	  satisfies a set of mathematical and/or physical constraints;\\
	  cf.$\:$Remark~3.1.2.
	         % Remark [constraints on $(\widehat{\varphi}^{\sharp},\widehat{\nabla})]
  \end{itemize}						

 \centerline{\includegraphics[width=0.80\textwidth]{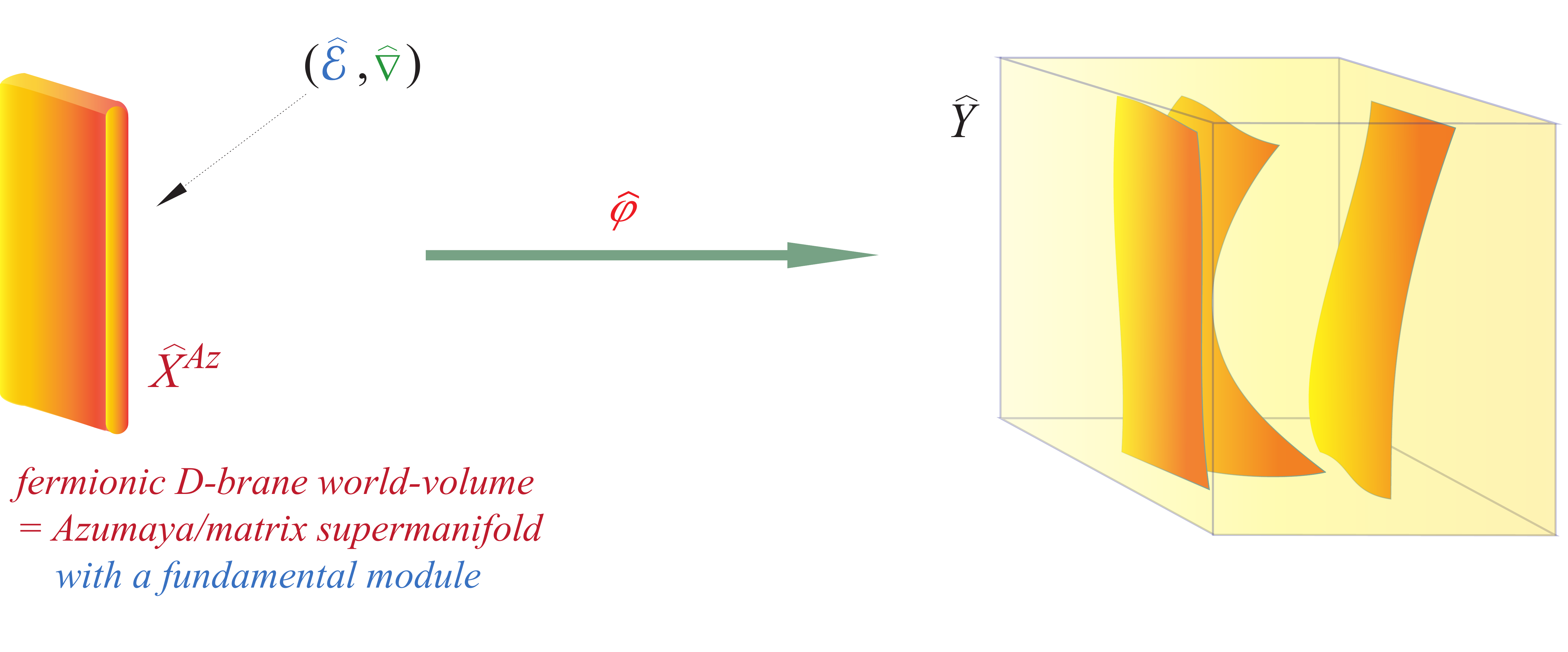}}
  
 \smallskip
  
 (2) For a {\it fermionic D-brane in the Ramond-Neveu-Schwarz formulation},
  we require that $S_1$ be a (direct sum of) spinor bundle(s) on $X$ in Item (1).
 In this case, the super target-space $\widehat{Y}$ can be just the $C^{\infty}$-manifold $Y$
  (i.e.$\:S_2$ can be set to the zero-bundle);
  and to incorporate (map, mappino)-pair into one single field
      on which the world-volume supersymmetry can act,
    $\widehat{\varphi}^{\sharp}$ is forced to violate the ${\Bbb Z}/2$-grading.
 Furthermore, the pair $(\widehat{\varphi}^{\sharp},\widehat{\nabla})$
  must satisfy additional  constraint equations to match with representations of
  (global or localized) supersymmetry algebra on the domain $\widehat{X}$;
  cf.$\:$Remark~3.1.2.
         % Remark [constraints on $(\widehat{\varphi}^{\sharp},\widehat{\nabla})]
 
 \smallskip	
 
 (3) For a {\it fermionic D-brane in the Green-Schwarz formulation},
  we require that $S_2$ be a (direct sum of) spinor bundle(s) on $Y$ in Item (1).
  In this case,
    $S_1$ on $X$, while required by Algebraic Geometry, can be taken as auxiliary   and
	needs not to come from spinor bundles on $X$.
  One may require $\widehat{\varphi}^{\sharp}$ be ${\Bbb Z}/2$-grading-preserving.
  Again, the pair $(\widehat{\varphi}^{\sharp},\widehat{\nabla})$
    must satisfy additional constraint equations to match with representations of
  (global or localized) supersymmetry algebra on the target $\widehat{Y}$;
  cf.$\:$Remark~3.1.2.
         % Remark [constraints on $(\widehat{\varphi}^{\sharp},\widehat{\nabla})]
 }\end{definition-prototype}

\bigskip

\begin{remark} $[$constraints on $(\widehat{\varphi}^{\sharp},\widehat{\nabla})]\;\:${\rm
 The constraints on $(\widehat{\varphi}^{\sharp},\widehat{\nabla})$ come from two sources.
 The first set of constraints comes from
   an admissible condition on $(\widehat{\varphi}^{\sharp},\widehat{\nabla})$
   so that covariant tensors on $\widehat{Y}$ can be pulled back to covariant tensors on $\widehat{X}$
   --- an issue one always has to face and resolve
         in the construction of an action functional for $(\widehat{\varphi}^{\sharp},\widehat{\nabla})$'s.
 Such an admissible condition may turn out to have a physical meaning;		
  cf.$\:$[L-Y5] (D(13.1)), [L-Y6] (D(13.2.1)), [L-Y7] (D(13.3)).
		
 The second set of constraints
   arises in both the Ramond-Neveu-Schwarz formulation and the Green-Schwarz formulation
   from the fact that $(\widehat{\varphi}^{\sharp},\widehat{\nabla})$
    in general has far more independent component fields
	than as dictated by representations of the supersymmetry algebra in question.
 Such redundant degrees of freedom have to be removed by imposing suitable constraint equations on
   $(\widehat{\varphi}^{\sharp},\widehat{\nabla})$. 		
 The details of such constraint equations depend on the supersymmetry algebra
   and have to be investigated case by case
   by the dimension of the space(-time) in question and by the total number of supersymmetries involved;
 cf.$\;$[West], [W-B].
 Though of physical origin, they may turn out to have mathematical/geometrical meaning. 
}\end{remark}
 
\bigskip

The stage is now set ([L-Y3] (D(11.2)) \& the current notes);
 the cast (Definition~2.1.1 \& Definition~2.1.6 \& Definition-Prototype~3.1.1) is in position
  (Theorem~2.1.5 \& Theorem~2.1.8);
                               % Definition [Azumaya/matrix super $C^{\infty}$-manifold with a fundamental module]
                               % Definition [$C^{\infty}$ from Azumaya/matrix supermanifold]
		                       % Definition-Prototype [fermionic D-brane  \`{a} la RNS or GS fermionic string]
							   % Theorem [every ring-homomorphism in question $C^{\infty}$-admissible]
							   % Theorem [$C^{\infty}$-map from Azumaya/matrix supermanifold to ${\Bbb R}^{n|s_2}$]
 related bosonic exercises were practiced ([L-Y5] (D(13.1)) \& [L-Y7] (D(13.3)));
let the fermionic play begin.

\bigskip

\subsection{Remark on the notion of `$C^{\infty}$-admissible noncommutative rings'}

Before leaving the notes, we give a remark on the notion of `{\sl $C^{\infty}$-admissible noncommutative rings}'
 as a byproduct of our study.

A $C^{\infty}$-ring is always commutative by the very nature of the $C^{\infty}$-ring structure.
However, the proof of [L-Y4: Theorem 3.2.1] (D(11.3.1)) and Theorem~2.1.8
                                  % Theorem [$C^{\infty}$-map from Azumaya/matrix supermanifold to ${\Bbb R}^{n|s_2}$]
 in the current notes suggests
 a natural notion of `{\sl $C^{\infty}$-admissible noncommutative rings}',
  or --- with a slight abuse and possible confusion of language --- ``{\it noncommutative $C^{\infty}$-rings}".

\bigskip

\begin{definition} {\bf [$C^{\infty}$-admissible noncommutative ring]}$\;$ {\rm
 An (associative, unital) ring is said to be {\it $C^{\infty}$-admissible} if
   \begin{itemize}
    \item[(1)]
     For all $n\in {\Bbb N}$ and for every finite set of commuting elements $r_1,\,\cdots\,,\, r_n\in R$,
	  an element in $R$, denoted by
	    $$
		   f(r_1,\,\cdots\,, r_n)\,,
		$$
      is uniquely defined and commutes with $r_1,\,\cdots\,,\, r_n$,	
	  for all $f\in C^{\infty}({\Bbb R}^n)$.
	
	\item[(2)]
	 Under Condition (1),
	   let $r_1,\,\cdots\,,\, r_n$, $n\in {\Bbb N}$, be a set of commuting elements in $R$.
	 Then
	   $$
	      f_1(r_1,\,\cdots\,,\, r_n)\,,\,\cdots\,,\, f_m(r_1,\,\cdots\,,\, r_n)
	   $$
	   commute also with each other
	   for all $f_1,\,\cdots\,,\, f_m\in C^{\infty}({\Bbb R}^n)$.
	
	\item[(3)]
	 Under Condition (1) and Condition (2),
	   let $f_1,\,\cdots\,,\, f_m\in C^{\infty}({\Bbb R}^n)$, $g\in C^{\infty}({\Bbb R}^m)$, and
        set $h=g(f_1,\,\cdots\,,\, f_m)\in C^{\infty}({\Bbb R}^n)$ be the composition of $g$ with\\
		$(f_1,\,\cdots\,,\, f_m)\in C^{\infty}({\Bbb R}^n\rightarrow {\Bbb R}^m)$.
     Then,
       $$
          h(r_1,\,\cdots\,,\,r_n )
		    = g(f_1(r_1,\,\cdots\,, r_n),\,\cdots\,,\, f_m(r_1,\,\cdots\,,\, r_n))
       $$	
	   for any set of commuting elements $r_1,\,\cdots\,,\, r_n\in R$, $n\in {\Bbb N}$.
	
   \item[(4)]
     Under Condition (1), the following normalization condition is set.
	 For all $n\in {\Bbb N}$ and $1\le j\le n$,
	  define $\pi_j:{\Bbb R}^n\rightarrow {\Bbb R}$ by $\pi_j(x_1,\,\cdots\,,\, x_n)=x_j$.
     Then, for any set of commuting elements $r_1,\,\cdots\,,\, r_n\in R$,
       $$
         \pi_j(r_1,\,\cdots\,,\, r_n)\;=\; r_j\,.
       $$	   	 	
   \end{itemize}
   
  A {\it morphism} between $C^{\infty}$-admissible rings $R$ and $S$ is a ring-homomorphism
   $\rho:R\rightarrow S$ such that
    $$
      f(\rho(r_1),\,\cdots\,,\, \rho(r_n))\;  =\;  \rho( f(r_1,\,\cdots\,,\, r_n))
	$$
	for all $f\in C^{\infty}({\Bbb R}^n)$, $n\in {\Bbb N}$.
}\end{definition}
 
\bigskip

\noindent
In words, a $C^{\infty}$-admissible ring is a ring such that whenever the $C^{\infty}$-structure has a chance
 to apply to its subset of elements, then it applies and works consistently.

\bigskip

\begin{definition} {\bf [$C^{\infty}$-hull]}$\;$ {\rm
 For $R$ an (associative, unital) $C^{\infty}$-admissible ring,
   let $\Lambda$ be a finite set $\Lambda$ of commuting elements in $R$.
 The {\it $C^\infty$-hull} of $\Lambda$ in $R$ is defined to be the subset
  $$
    C^\infty\mbox{\it -Hull}_{\,R}(\Lambda)\;
	 :=\;
	   \{f(r_1,\,\cdots\,,\, r_l)\, |\,
	           f\in C^{\infty}({\Bbb R}^l),\,
			   r_1,\,\cdots\,,\, r_l \in \Lambda,\,
			   l\in {\Bbb N}
			\}\;\subset \; R\,.
  $$
}\end{definition}

\bigskip
  
A $C^\infty$-hull in $R$ inherits a $C^{\infty}$-ring structure from the $C^{\infty}$-admissibility of $R$.
By construction,
 $C^\infty\mbox{\it -Hull}_{\,R}(\Lambda)$
   is the minimal $C^{\infty}$-subring of $R$ that contains $\Lambda$.
In terms of this,
 a morphism $\rho:R\rightarrow S$ between $C^\infty$-admissible rings
 is a ring-homomorphism $\rho: R \rightarrow S$
  that restricts to $C^{\infty}$-ring-homomorphims
   $C^\infty\mbox{\it -Hull}_{\,R}(\Lambda)
       \rightarrow C^\infty\mbox{\it -Hull}_{\,S}(\rho(\Lambda))$
  of $C^{\infty}$-hulls.
 
Similar argument to the proof of [L-Y4: Theorem 3.2.1] (D(11.3.1)) and Theorem~2.1.8 in the current notes
                              % Theorem [$C^\infinity$-map from Azumaya/matrix supermanifold to ${\BBb R}^{n|s_2}$]
 shows that all the noncommutative rings that have appeared in our study of D-branes so far
 $$
   C^{\infty}(\End_{\Bbb C}(E))\,,\;\;
   C^{\infty}(\widehat{X}_{[s_1]})\,,\;\;
   C^{\infty}(\End_{\widehat{\Bbb C}_{[s_1]}}(\widehat{E}))\,,\;\;
   C^{\infty}(\widehat{Y}_{[s_2]})\,,\;\;
   C^{\infty}(\widehat{X\times Y}_{[s_1,s_2]})
 $$
  are $C^{\infty}$-admissible in the sense of Definition~3.2.1 above
                                                                               % Definition [$C^{\infty}$-admissible noncommutative ring]
 and that
  all the morphisms considered are morphisms between $C^{\infty}$-admissible rings in that sense.

%\vspace{6em}
\newpage
\baselineskip 13pt
%references
{\footnotesize

\vspace{1em}

\noindent
chienhao.liu@gmail.com, 
 chienliu@cmsa.fas.harvard.edu; \\
yau@math.harvard.edu

}%endfootnotesize

\end{document}